\long\def\symbolfootnote[#1]#2{\begingroup\def\thefootnote{\fnsymbol{footnote}}\footnote[#1]{#2}\endgroup}

\documentclass{amsart}
\usepackage{amssymb}
\usepackage{amsmath}
\usepackage{amsfonts}

\setcounter{MaxMatrixCols}{10}

\newtheorem{theorem}{Theorem}[section]
\newtheorem{corollary}[theorem]{Corollary}
\newtheorem{lemma}[theorem]{Lemma}
\theoremstyle{remark}

\theoremstyle{definition}

\newtheorem{example}[theorem]{Example}
\theoremstyle{proposition}
\newtheorem{proposition}[theorem]{Proposition}
\numberwithin{equation}{section}

\begin{document}
\author{Ovidiu Munteanu and Jiaping Wang}
\title[Metric measure spaces]{Smooth metric measure spaces with nonnegative
curvature}
\date{}
\maketitle

\begin{abstract}
In this paper we study both function theoretic and spectral properties on
complete noncompact smooth metric measure space $(M,g,e^{-f}dv)$ with
nonnegative Bakry-\'{E}mery Ricci curvature. Among other things, we derive a
gradient estimate for positive $f$-harmonic functions and obtain as a
consequence the strong Liouville property under the optimal sublinear growth
assumption on $f.$ We also establish a sharp upper bound of the bottom
spectrum of the $f$-Laplacian in terms of the linear growth rate of $f.$
Moreover, we show that if equality holds and $M$ is not connected at
infinity, then $M$ must be a cylinder. As an application, we conclude steady
Ricci solitons must be connected at infinity.
\end{abstract}

\section{Introduction}

\symbolfootnote[0]{The first author has been partially supported by NSF grant No. DMS-1005484}

On a Riemannian manifold $\left( M,g\right),$ the consideration of weighted
measure of the form $e^{-f}dv,$ where $f$ is a smooth function and $dv$ is
the volume element induced by the metric $g,$ arises naturally in various
situations. It can be viewed as the volume form of a suitable conformal
change of the metric $g.$ Perhaps a more notable example is in the work of
Perelman \cite{P}, where he introduces a functional involving the integral
of the scalar curvature with respect to such a weighted measure and
formulates the Ricci flow as the gradient flow of the functional. The triple 
$(M,g, e^{-f}dv)$ is customarily called a smooth metric measure space. The
differential operator $\Delta_f,$ which is called $f$-Laplacian and given by 
\begin{equation*}
\Delta _{f}:=\Delta -\nabla f\cdot \nabla
\end{equation*}%
is more naturally associated with such a smooth metric measure space than
the classical Laplacian as it is symmetric with respect to the measure $%
e^{-f}dv.$ That is, 
\begin{equation*}
\int_{M}\left\langle \nabla \varphi ,\nabla \psi \right\rangle
e^{-f}=-\int_{M}\left( \Delta _{f}\varphi \right) \psi e^{-f},
\end{equation*}%
for any $\varphi ,\psi \in C_{0}^{\infty }\left( M\right) .$

Again, we point out that the operator $f$-Laplacian is very much related to
the Laplacian of a suitable conformal change of the background Riemannian
metric. It also appears as the generator of a class of stochastic diffusion
processes, the Brownian motion with drifts.

The Bakry-\'{E}mery Ricci tensor \cite{BE} of the metric measure space $%
(M,g,e^{-f}dv)$ is defined by 
\begin{equation*}
Ric_{f}:=Ric+Hess(f),
\end{equation*}%
where $Ric$ denotes the Ricci curvature of $M$ and $Hess(f)$ the Hessian of $%
f.$ This curvature relates to $f$-Laplacian via the following Bochner
formula 
\begin{equation*}
\Delta _{f}|\nabla u|^{2}=2|Hess(u)|^{2}+2\langle \nabla u,\nabla \Delta
_{f}u\rangle +2Ric_{f}(\nabla u,\nabla u).
\end{equation*}%
This of course suggests the important role of $Ric_{f}$ in the analysis of $%
f $-Laplacian. Perhaps as a more prominent example, $Ric_{f}$ also appears
in the study of the Ricci flow. The gradient solitons of the Ricci flow,
which arise from the singularity analysis of the Ricci flow, are defined to
be complete manifolds $(M,g)$ that the following equation 
\begin{equation*}
Ric_{f}=\lambda g
\end{equation*}%
holds for some function $f$ and constant $\lambda .$ Obviously, the Einstein
manifolds are gradient Ricci solitons. The gradient Ricci solitons are
called shrinking, steady and expanding accordingly when $\lambda >0,$ $%
\lambda =0$ and $\lambda <0,$ \cite{H}.

The classification of gradient Ricci solitons is an important problem from
the point of view of both the Ricci flow singularity analysis and purely as
a class of geometric partial differential equations. The problem has
received much attention recently. The book \cite{CLN} is a good source for
some of the important results. But it seems fair to say that the whole
picture is far from clear for now.

Partially motivated by interest in the study of gradient Ricci solitons,
various attempts have been made recently to study the geometry and analysis
on general metric measure spaces. We will refer the readers to \cite{Li, W}
for some of the results. It should be noted however a while back,
Lichnerowicz \cite{Lic} has already done some pioneering work in this
direction. In particular, he has extended the classical Cheeger-Gromoll
splitting theorem to the metric measure spaces with $Ric_{f}\geq 0$ and $f$
bounded.

In this paper, we will investigate some function theoretic and spectral
properties of metric measure space $\left(M,g,e^{-f}dv\right).$ While the
results are of independent interest, applications to the steady gradient
Ricci solitons are no doubt of our main focus. Throughout, we will assume $%
Ric_f\ge 0$ and $f$ is of linear growth unless otherwise noted. Recall that $%
f$ is of linear growth if for all $x$ in $M,$ 
\begin{equation*}
|f|(x)\le \alpha \,r(x)+\beta
\end{equation*}%
for some constants $\alpha$ and $\beta,$ where $r(x):=d(p,x)$ is the geodesic
distance function to a fixed point $p$ in $M.$ The linear growth rate $a$ of $f$
is then defined as the infimum of all such values $\alpha.$

Clearly, a steady gradient Ricci soliton $(M,g)$ satisfies $Ric_f=0$ for
some $f.$ It is also well-known that the potential function $f$ in this case
is of linear growth. So all of our results are applicable to the steady
gradient solitons. On the other hand, it should also be noted that many, if
not all, of our results will fail without the growth assumption on function $%
f.$

Our first result gives a gradient estimate for positive $f$-harmonic
functions on $(M,g,e^{-f}dv).$

\begin{theorem}
\label{A} Let $\left( M,g,e^{-f}dv\right) $ be a complete noncompact smooth
metric measure space with $Ric_{f}\geq 0.$ Assume that $f$ has linear growth
rate $a$ and let $u>0$ be $f$-harmonic on $M,$ i.e., $\Delta _{f}u=0$ on $M.$
Then the following gradient estimate holds true on $M.$%
\begin{equation*}
\left\vert \nabla \log u\right\vert \leq C\left( n\right) a,
\end{equation*}%
where the constant $C\left( n\right) $ depends only on the dimension $n.$ In
particular, if $f$ is of sublinear growth, then any positive $f$-harmonic
function $u$ on $M$ must be constant.
\end{theorem}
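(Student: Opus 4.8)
The plan is to reduce the estimate to a differential inequality via the Bochner formula and then extract a bound from it. Writing $v=\log u$, the $f$-harmonicity $\Delta_f u=0$ translates into
\begin{equation*}
\Delta_f v=\frac{\Delta_f u}{u}-\frac{|\nabla u|^2}{u^2}=-|\nabla v|^2,
\end{equation*}
so setting $w=|\nabla v|^2=|\nabla\log u|^2$ the goal becomes $w\le C(n)\,a^2$. Applying the Bochner formula from the introduction to $v$ and discarding the nonnegative term $Ric_f(\nabla v,\nabla v)\ge0$ gives, using $\nabla\Delta_f v=-\nabla w$,
\begin{equation*}
\tfrac12\Delta_f w\ge |Hess\,v|^2-\langle\nabla v,\nabla w\rangle.
\end{equation*}

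To produce the $w^2$ needed to close the estimate I would bound the Hessian below by its trace. Because the natural operator here is $\Delta_f$ rather than $\Delta$, I first rewrite $\Delta v=\Delta_f v+\langle\nabla f,\nabla v\rangle=-w+\langle\nabla f,\nabla v\rangle$, so that Cauchy--Schwarz gives
\begin{equation*}
|Hess\,v|^2\ge \tfrac1n(\Delta v)^2=\tfrac1n\left(w-\langle\nabla f,\nabla v\rangle\right)^2.
\end{equation*}
This is where a drift term appears and where the argument genuinely departs from the classical Cheng--Yau estimate. I would then localize: fix $p$, take a cutoff $\phi=\phi(r)$ with $\phi\equiv1$ on $B_R(p)$ and supported in $B_{2R}(p)$, and examine $G=\phi^2 w$ at an interior maximum $x_0$, where $\nabla G=0$ and $\Delta_f G\le0$. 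Substituting the two displays above, using $|\nabla v|=w^{1/2}$ and the critical-point relations, yields an inequality of the schematic shape
\begin{equation*}
\tfrac{2}{n}\phi^2 w^2\le \tfrac{4}{n}\phi^2\,|\nabla f|\,w^{3/2}+C\left(w^{3/2}\phi\,|\nabla\phi|+w\phi\,|\Delta_f\phi|+w\,|\nabla\phi|^2\right).
\end{equation*}

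The main obstacle is the first term on the right, which carries $|\nabla f|$. One cannot simply bound $|\nabla f|$ pointwise by the growth rate $a$: the example $f=\sqrt{r}$ has rate $a=0$ while $\nabla f\ne0$, so the sublinear case of the theorem would be false if a pointwise gradient bound were the mechanism. Instead I expect the linear growth of $f$ to enter in an \emph{averaged} form, whence the cleanest route is to replace the pure maximum principle by integrating the differential inequality against the weighted measure $e^{-f}$ over annuli $B_{2R}(p)\setminus B_R(p)$; there the drift contribution is estimated by integrals of $|\nabla f|$, which the bound $|f|\le a\,r+\beta$ keeps proportional to $a$ times the weighted volume, while the cutoff terms involving $\Delta_f\phi$ are controlled through the weighted Laplacian comparison valid under $Ric_f\ge0$. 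Letting $R\to\infty$ then kills the $\phi|\nabla\phi|$ and $|\nabla\phi|^2$ contributions and leaves $w\le C(n)\,a^2$ on all of $M$, that is $|\nabla\log u|\le C(n)\,a$. In particular, when $f$ is of sublinear growth one may take $a$ arbitrarily small, forcing $|\nabla\log u|\equiv0$ and hence $u$ constant.
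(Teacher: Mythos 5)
Your reduction is correct as far as it goes, and you have correctly located the obstruction: the drift term $\tfrac{2}{n}w\langle\nabla f,\nabla v\rangle$ produced by $|Hess\,v|^{2}\ge\tfrac1n(\Delta v)^{2}$ cannot be absorbed by a pointwise bound on $|\nabla f|$. The gap is in your proposed resolution. The claim that the growth bound $|f|\le a\,r+\beta$ keeps annulus integrals of $|\nabla f|$ ``proportional to $a$ times the weighted volume'' is asserted, not proved, and nothing in the hypotheses supports it: a zeroth-order bound on $f$ gives no first-order bound, pointwise or averaged (a function with $|f|\le\beta$, hence $a=0$, can oscillate with large gradient on every annulus), and the only derivative information that $Ric_f\ge0$ supplies is the weighted Laplacian comparison, which controls the combination $\Delta_f r=\Delta r-\langle\nabla f,\nabla r\rangle$, never $|\nabla f|$ itself. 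Nor can the term be integrated by parts away: after Cauchy--Schwarz it sits as $|\nabla f|\,w^{3/2}$, not as $\langle\nabla f,\nabla(\cdot)\rangle$ paired against $e^{-f}dv$, and integrating the un-Cauchy--Schwarzed version $\int\phi^{2}w\langle\nabla f,\nabla v\rangle e^{-f}$ by parts produces third-order terms in $v$ and the unweighted $\Delta v$, which are not controlled. Finally, even if one obtained $\int\phi^{2}w^{2}e^{-f}\le Ca^{2}(\cdots)$, passing to the pointwise conclusion $w\le C(n)a^{2}$ would need a mean value inequality for $w$, i.e.\ a differential inequality $\Delta_f w\ge-Cw$, which reintroduces the same drift term. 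This is exactly the ``essential obstacle'' the authors flag for the direct Cheng--Yau route on $\log u$.

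The paper's proof avoids the drift entirely and has two ingredients, neither of which appears in your sketch. First (Proposition \ref{P}), following Brighton, the maximum principle is run on $h=\tfrac1\epsilon u^{\epsilon}$ rather than $\log u$: in the Bochner formula for $\sigma=|\nabla h|^{2}$ the Hessian term is simply discarded, and the good quadratic term $\epsilon(1-\epsilon)u^{-2\epsilon}\sigma^{2}$ is generated instead from $\langle\nabla h,\nabla(\Delta_f h)\rangle$, so no $\nabla f$ ever enters; the price is that the resulting bound on $|\nabla\log u|$ is in terms of the exponential growth rate $\Omega(u)$ of $u$, not of $a$. Second, the growth hypothesis on $f$ is used only to prove $\Omega(u)\le c(n)a$, via a mean value inequality from Moser iteration (resting on the local Neumann Poincar\'e and Sobolev inequalities of Lemmas \ref{NP} and \ref{NS}, whose constants depend on $\sup_{B_p(3R)}|f|$) together with an ODE argument giving $\int_{B_p(R)}u\le Ce^{5aR}$ from the weighted Laplacian comparison. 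The step you leave schematic is precisely the one that cannot be completed along the route you chose.
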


This estimate is sharp as demonstrated by the following simple example.

\begin{example}
\cite{N} \label{E1}Let $M=\mathbb{R}\times \mathbb{S}^{n-1}$ and $f\left(
t,\theta \right) =at$ for $t\in \mathbb{R}$ and $\theta \in \mathbb{S}^{n-1}$%
. Then $u\left( t,\theta \right) :=e^{at}$ is positive $f$-harmonic on $M.$
Clearly, the linear growth rate of $f$ is $a$ and $|\nabla \log u|=a.$
\end{example}

Although the statement in the theorem takes the form of Yau's classical
result on the positive harmonic functions, we would like to point out that
our proof is quite different. In the classical case of the Ricci curvature,
Yau \cite{Y} directly works with $\log u$ and obtains an estimate on $%
|\nabla \log u|$ via the Bochner formula. This approach works also with the
N-Bakry-\'{E}mery Ricci curvature given by

\begin{equation*}
Ric_{f}^{N}:=Ric+Hess_{f}-\frac{1}{N}df\otimes df
\end{equation*}%
as demonstrated by Li \cite{Li}. For the case of curvature $Ric_f,$ if one
imposes suitable assumption on $|\nabla f|$, say, it is bounded, then it is
still possible to utilize Yau's argument as shown by Wu \cite{Wu}. However,
with only the growth assumption on $f,$ this direct approach seems to run
into essential obstacles. Our argument relies on both Yau's idea and the
well known De Giorgi-Nash-Moser theory. In a recent paper \cite{B},
Brighton, by applying Yau's idea to function $u^{\epsilon}$ instead of $\log
u,$ proved bounded $f$-harmonic functions must be constant without any
assumption on $f$ so long as $Ric_f\ge 0.$ In our proof, we first refined
Brighton's argument and derived the gradient estimate under the assumption
that $u$ is of exponential growth. Here, no growth assumption on $f$ is
necessary. In particular, this implies that any sub-exponential growth
positive $f$-harmonic on $M$ with $Ric_f\ge 0$ is constant. The growth
assumption on $f$ was then used to get the desired growth control on $u.$
For that, we utilize a different set of techniques including a mean value
inequality obtained through the Moser's iteration argument.

We also deal with the polynomial growth $f$-harmonic functions. Here, the
result is very much parallel to the case of harmonic functions on a manifold
with nonnegative Ricci curvature, obtained by Cheng and Yau \cite{CY},
Li-Tam \cite{LT} , Li \cite{L1}, and Colding-Minicozzi \cite{CM},
respectively. We define the space%
\begin{equation*}
\mathcal{H}^{d}\left( M\right) :=\left\{ u:\ \Delta _{f}u=0\ \ \text{and\ \ }%
\left\vert u\right\vert \left( x\right) \leq C\left( r\left( x\right)
+1\right) ^{d}\right\} .
\end{equation*}

\begin{theorem}
\label{Dimension} Let $\left( M,g,e^{-f}dv\right) $ be a complete noncompact
smooth metric measure space with $Ric_{f}\geq 0$ and $f$ bounded. Then there
exists $\mu >0$ such that 
\begin{equation*}
\dim \mathcal{H}^{d}\left( M\right) =1,\ \ \ \ \text{if \ }d<1
\end{equation*}%
\begin{equation*}
\dim \mathcal{H}^{d}\left( M\right) \leq n+1,\ \ \ \text{if\ \ }\ d=1
\end{equation*}%
and 
\begin{equation*}
\dim \mathcal{H}^{d}\left( M\right) \leq Cd^{\mu }\ ,\ \ \ \text{if\ \ }\
d\geq 1.
\end{equation*}
\end{theorem}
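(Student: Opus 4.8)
The plan is to transplant to the weighted setting the arguments of Cheng--Yau, Li--Tam, Li, and Colding--Minicozzi. All of these rest on two analytic inputs: a volume doubling property and a scale-invariant mean value inequality for nonnegative subharmonic functions. My first step is therefore to secure the weighted analogues. Since $f$ is bounded, say $|f|\leq k$, one has $e^{-k}dv\leq e^{-f}dv\leq e^{k}dv$, so the weighted volume $V_f(B_p(r)):=\int_{B_p(r)}e^{-f}$ is uniformly comparable to the Riemannian volume. Feeding this into the Wei--Wylie comparison theorem for $Ric_f\geq 0$ gives the weighted doubling property $V_f(B_p(2r))\leq C_D\,V_f(B_p(r))$ with $C_D=C_D(n,k)$, and, together with the weighted Neumann--Poincar\'e inequality (again a consequence of $Ric_f\geq 0$ with $f$ bounded), a mean value inequality for $f$-subharmonic functions via Moser iteration. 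I note that $f$ bounded means $f$ has linear growth rate $a=0$, so Theorem~\ref{A} already disposes of the \emph{bounded} case: any bounded $f$-harmonic function is constant.

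With doubling and the mean value inequality in hand, the bound $\dim\mathcal{H}^d(M)\leq Cd^{\mu}$ for $d\geq 1$ follows from the Colding--Minicozzi counting scheme, which uses only these two properties and is insensitive to the underlying measure. For a finite-dimensional subspace $K\subseteq\mathcal{H}^d(M)$ of dimension $\kappa$, one forms the Gram matrices $A_r=(\langle u_i,u_j\rangle_r)$ of a basis with respect to $\langle u,v\rangle_r:=\int_{B_p(r)}uv\,e^{-f}$. Applying the mean value inequality to the $f$-subharmonic functions $u^2$ produces a pointwise bound on $\sum_i v_i^2$ for any $A_r$-orthonormal basis $\{v_i\}$, and combined with doubling this controls $\operatorname{tr}(A_{\beta r}A_r^{-1})$; the polynomial growth defining $\mathcal{H}^d$ enters as a factor $\beta^{2d}$. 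The Colding--Minicozzi linear-algebra lemma then converts this into $\kappa\leq Cd^{\mu}$ with $\mu$ depending only on $C_D$, and since $K$ was arbitrary this bounds $\dim\mathcal{H}^d(M)$.

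The two endpoint cases are handled by hand. For $d<1$, the Bochner formula together with $Ric_f\geq 0$ gives $\Delta_f|\nabla u|^2=2|Hess(u)|^2+2Ric_f(\nabla u,\nabla u)\geq 0$, so $|\nabla u|^2$ is $f$-subharmonic. Combining the mean value inequality with the Caccioppoli inequality $\int_{B_p(R)}|\nabla u|^2e^{-f}\leq CR^{-2}\int_{B_p(2R)}u^2e^{-f}$ and the doubling property yields $\sup_{B_p(R/2)}|\nabla u|^2\leq CR^{-2}\,(\sup_{B_p(2R)}|u|)^2\leq CR^{2d-2}$, which tends to $0$ as $R\to\infty$; hence $u$ is constant and $\dim\mathcal{H}^d(M)=1$. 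For the sharp linear case $d=1$, the same chain of inequalities shows every $u\in\mathcal{H}^1(M)$ has bounded gradient, and the mean value property applied to the $f$-subharmonic $|\nabla u|^2$ makes the asymptotic average bilinear form
\begin{equation*}
Q(u,v):=\lim_{r\to\infty}\frac{1}{V_f(B_p(r))}\int_{B_p(r)}\langle\nabla u,\nabla v\rangle\,e^{-f}
\end{equation*}
well defined, finite, and positive definite on $\mathcal{H}^1(M)$ modulo constants. Following the Li--Tam refinement of the counting argument in this linear regime then bounds $\dim(\mathcal{H}^1(M)/\{\text{const}\})$ by $n$, giving $\dim\mathcal{H}^1(M)\leq n+1$.

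I expect the main obstacle to lie in the very first step. For $Ric_f\geq 0$ alone the comparison geometry underlying doubling, the Poincar\'e inequality and the mean value inequality breaks down, and it is precisely the boundedness of $f$ that restores it through the Wei--Wylie estimates; keeping careful track of the dependence of $C_D$ and hence of $\mu$ on $k=\sup|f|$ is the delicate bookkeeping. Once the weighted doubling and mean value inequality are established, the Colding--Minicozzi machinery is essentially measure-agnostic and transfers with only cosmetic changes, and the only other subtle point is the positive definiteness of $Q$ in the $d=1$ case, where the nonnegativity of $Ric_f$ is used most crucially through the Bochner formula.
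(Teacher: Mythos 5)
Your overall architecture coincides with the paper's: boundedness of $f$ makes the weighted measure uniformly comparable to $dv$, turns the comparison theory for $Ric_f\geq 0$ into uniform volume doubling and (via the Neumann--Poincar\'e and Sobolev inequalities, Lemma \ref{NS}) into a scale-invariant mean value inequality for $f$-subharmonic functions; the case $d<1$ is then the Bochner--Caccioppoli--mean-value chain exactly as in the paper's Theorem \ref{Sublin}; the case $d\geq 1$ is the doubling-plus-mean-value counting scheme, which the paper simply quotes from \cite{L1} and you describe in Colding--Minicozzi form -- same substance; and the case $d=1$ follows Li--Tam \cite{LT}.

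There is, however, one genuine gap, in the $d=1$ case. You assert that ``the mean value property applied to the $f$-subharmonic $|\nabla u|^2$'' makes the form
\begin{equation*}
Q(u,v)=\lim_{r\to\infty}\frac{1}{V_f\left(B_p(r)\right)}\int_{B_p(r)}\langle\nabla u,\nabla v\rangle\,e^{-f}
\end{equation*}
well defined. The mean value \emph{inequality} (sup over a half ball controlled by the average over the ball) gives finiteness of the $\limsup$ and, by polarization, positive definiteness modulo constants, but it does not give \emph{existence of the limit}; and without an honest limit $Q$ is not bilinear, you cannot orthonormalize, and the Li--Tam count $\left(R/R_\varepsilon\right)^{l-\varepsilon}\leq V_f\left(B_p(R)\right)/V_f\left(B_p(R_\varepsilon)\right)\leq CR^{n}$ cannot be run. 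What is actually needed is the asymptotic mean value \emph{equality}: for every bounded $f$-subharmonic $v$, the normalized averages $V_f\left(B_p(R)\right)^{-1}\int_{B_p(R)}v\,e^{-f}$ converge to $\sup_M v$. This is the technical heart of the paper's $d=1$ argument and is proved by adapting the monotonicity argument of \cite{CCM}: solve $\Delta_f h_R=0$ in $B_p(R)$ with boundary data $\sup_M v-v$, use a Harnack inequality (again from Moser iteration) to make $h_R$ uniformly small on $B_p(R/2)$, and use the monotonicity of $r^{-C}\int_{\partial B_p(r)}h_R e^{-f}$, coming from the weighted Laplacian comparison, to propagate the smallness out to $\partial B_p(R)$ and hence control $\int_{\partial B_p(R)}v\,e^{-f}$. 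Your proposal contains no substitute for this step, so as written the $d=1$ bound does not close; the other two cases and the reduction to doubling plus the mean value inequality are correct and match the paper.
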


Our second objective is to study the spectrum of the $f$-Laplacian on $%
(M,g,e^{-f}dv).$ Define $\lambda _{1}\left( M\right) :=\min Spec\left(
-\Delta_{f}\right)$ to be the bottom spectrum of $\Delta_f.$ By the
variational characterization, we have 
\begin{equation*}
\lambda _{1}\left( M\right) =\inf_{\phi \in C_{0}^{\infty }\left( M\right) }%
\frac{\int_{M}\left\vert \nabla \phi \right\vert ^{2}e^{-f}}{\int_{M}\phi
^{2}e^{-f}}.
\end{equation*}

The following result summarizes what we proved concerning the bottom
spectrum.

\begin{theorem}
\label{Estimate} Let $\left( M,g,e^{-f}dv\right) $ be a complete noncompact
smooth metric measure space with $Ric_{f}\geq 0.$ Then 
\begin{equation*}
\lambda _{1}\left( M\right) \leq \frac{1}{4}a^{2},
\end{equation*}%
where $a$ is the linear growth rate of $f$. Moreover, if $\lambda _{1}\left(
M\right) =\frac{1}{4}a^{2},$ then $M$ is connected at infinity; or $M$ is
isometric to $\mathbb{R}\times N$ for some compact manifold $N.$
\end{theorem}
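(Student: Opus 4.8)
The plan is to treat the two assertions separately, handling the inequality by a weighted volume-growth/test-function argument and the rigidity by a Bochner-type integral estimate in the spirit of Li and Wang.

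For the inequality $\lambda_1(M)\le \frac14 a^2$, I would first dispose of the case of finite weighted volume, where the constant function is approximable in $C_0^\infty$ and forces $\lambda_1(M)=0$. Assuming then $\int_M e^{-f}=\infty$, the goal is to show that the weighted volume $V_f(r)=\int_{B(p,r)}e^{-f}$ grows at most at exponential rate $a$, i.e.\ $\limsup_{r\to\infty}\frac1r\log V_f(r)\le a$. To this end I would use the weighted Laplacian comparison: from $Ric_f\ge0$ and the Bochner formula applied to $r$ one finds that $m_f:=\Delta_f r$ is nonincreasing along minimal geodesics, since $m_f'\le-\frac{(\Delta r)^2}{n-1}-Ric_f(\partial_r,\partial_r)\le0$. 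Combining this monotonicity with the linear growth of $f$ (through a bound on the radial derivative $\partial_r f$ furnished by the growth rate $a$) controls the weighted area element $e^{-f}\mathcal J(r,\theta)$. Once the growth rate is bounded by $a$, a Brooks-type argument with test functions $\phi\approx e^{-\sigma r}$ cut off on $B(p,R)$, letting $\sigma$ decrease to $a/2$, yields a Rayleigh quotient approaching $\sigma^2$ and hence $\lambda_1(M)\le \frac14 a^2$.

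For the rigidity statement, suppose $\lambda_1(M)=\frac14 a^2$ and that $M$ is not connected at infinity, so that $M$ has at least two ends. Since $\lambda_1(M)>0$, the standard Li--Tam construction adapted to $\Delta_f$ produces a bounded nonconstant $f$-harmonic function $u$, with $0<u<1$ and finite weighted Dirichlet energy $\int_M|\nabla u|^2e^{-f}<\infty$, obtained as a limit of solutions on an exhaustion whose boundary data separate the ends. The extremal value $\lambda_1(M)=(a/2)^2$ should then force $u$ to decay at the fastest allowable exponential rate; concretely I expect the energy estimate $\int_{B(p,r+1)\setminus B(p,r)}|\nabla u|^2e^{-f}\le C e^{-ar}$. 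The heart of the matter is the ensuing equality analysis via the weighted Bochner formula. Writing $h=|\nabla u|$ and using $\Delta_f u=0$, the formula stated in the introduction gives the identity $h\,\Delta_f h=|Hess\,u|^2-|\nabla h|^2+Ric_f(\nabla u,\nabla u)$. Multiplying by suitable cutoffs, integrating against $e^{-f}$, and invoking the sharp decay together with $Ric_f\ge0$, the saturation of $\lambda_1(M)=\frac14 a^2$ should force every intermediate inequality to be an equality: $Ric_f(\nabla u,\nabla u)\equiv0$, equality in the weighted refined Kato inequality $|Hess\,u|^2=\tfrac{n}{n-1}|\nabla h|^2$, and $|\nabla h|=\frac a2\,h$. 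These conditions pin down the structure of $Hess\,u$, show that the integral curves of $\nabla u/|\nabla u|$ are lines, and via a splitting theorem yield that $M$ is isometric to a product $\mathbb R\times N$ with $f=at$ as in Example \ref{E1}. Finally, since $\mathbb R\times N$ has two ends exactly when $N$ is compact, the assumption that $M$ is disconnected at infinity forces $N$ to be compact.

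The step I expect to be the main obstacle is this equality analysis. Establishing the sharp exponential decay of $|\nabla u|$ at the critical rate $a/2$ requires a weighted version of the Li--Wang decay lemma, and pushing the integration by parts through requires the correct weighted refined Kato inequality together with careful control of the boundary terms at infinity---precisely where the weighted volume growth bound from the first part must be reused. By comparison, obtaining the upper bound and verifying the compactness of $N$ should be comparatively routine.
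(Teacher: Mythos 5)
Your proof of the upper bound follows the paper's route in outline (a weighted volume growth bound $V_f(B_p(R))\leq CR^ne^{aR}$ followed by Brooks-type test functions $e^{-\frac12(a+\varepsilon)r}\psi$), but one link is off: the linear growth rate of $f$ bounds $|f|$, not the radial derivative $\partial_rf$, so you cannot control $\Delta_fr=\Delta r-\partial_rf$ pointwise the way you indicate, nor does monotonicity of $m_f$ alone give the rate $a$ (the initial value $m_f(r_0)$ depends on the direction). The paper's fix is to integrate the comparison $\Delta_fd(x,y)\leq \frac{n-1}{r}-\frac{2}{r^2}\int_0^rtf'(t)\,dt$ by parts so that only $f$ itself appears, after which the hypothesis $|f|\leq ar+b$ enters and yields $J_f(p,R,\xi)\leq CR^{n-1}e^{aR}$. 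This part is repairable.

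The rigidity argument has a genuine gap. You propose to produce, via the Li--Tam construction, a bounded nonconstant $f$-harmonic function $u$ with $\int_M|\nabla u|^2e^{-f}<\infty$ and then run a Li--Wang-type equality analysis on it. But under $Ric_f\geq 0$ no such function exists: by Brighton's theorem (or by the Bochner-plus-cutoff argument the paper uses in its Lemma on nonparabolic ends), any bounded $f$-harmonic function with finite weighted Dirichlet energy is constant. Equivalently, $M$ has at most one $f$-nonparabolic end, and the Li--Tam construction yields a nonconstant bounded limit only when at least two ends are nonparabolic. Since $\lambda_1(M)>0$ forces $M$ to be $f$-nonparabolic, if $M$ has two ends then exactly one is $f$-nonparabolic and the other is $f$-parabolic, and your construction degenerates precisely in the case you need. (A secondary obstacle: the refined Kato inequality $|Hess\,u|^2\geq\frac{n}{n-1}|\nabla|\nabla u||^2$ requires $\Delta u=0$, whereas $\Delta_fu=0$ gives $\Delta u=\langle\nabla f,\nabla u\rangle\neq 0$, so the constant $\frac{n}{n-1}$ is not available.) The paper instead takes a ray in the $f$-parabolic end $F$ and works with its Busemann function $\beta$: the sharp decay $V_f(F\setminus B_p(R))\leq Ce^{-aR}$, the comparison $\Delta_f\beta\geq -a$, and the use of $e^{\frac12 a\beta}$ as a test function in the variational characterization of $\lambda_1(M)=\frac14a^2$ force $\Delta_f\beta=-a$ and $|\nabla\beta|=1$ everywhere; the Bochner formula then gives $\beta_{ij}=0$ and the splitting $M=\mathbb{R}\times N$, with $N$ compact because $M$ has two ends.
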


Note that the splitting case in the theorem does occur. Indeed, for $M=%
\mathbb{R}\times N,$ if we take $f(t,y)=at$ for $(t,y)\in \mathbb{R}\times
N, $ then $\left\vert \nabla f\right\vert =a,$ $\Delta _{f}e^{\frac{1}{2}%
at}=-\frac{1}{4}a^{2}e^{\frac{1}{2}at}$ and $\lambda _{1}\left( M\right) =%
\frac{1}{4}a^{2}.$

Applying the preceding result to the gradient steady Ricci solitons, we
obtain the following.

\begin{corollary}
A nontrivial gradient steady Ricci soliton must be connected at infinity.
\end{corollary}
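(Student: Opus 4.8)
The plan is to apply Theorem \ref{Estimate} to the soliton, after first checking that equality in its spectral bound is forced, and then to exclude the cylindrical alternative using compactness of the cross-section.

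First I would record the standard structure of a gradient steady Ricci soliton $(M,g,f)$. Since $Ric_{f}=Ric+Hess(f)=0$, in particular $Ric_{f}\geq 0$, so Theorem \ref{Estimate} applies and gives $\lambda_{1}(M)\leq \frac{1}{4}a^{2}$. Tracing the soliton equation yields $R+\Delta f=0$, and the classical identity $R+|\nabla f|^{2}=C$ holds for some constant $C$; moreover $R\geq 0$ on any complete steady soliton. If the soliton is nontrivial, i.e. not Ricci flat, then $R\not\equiv 0$, and since $\Delta _{f}R=-2|Ric|^{2}\leq 0$ the strong maximum principle forces $R>0$ everywhere and hence $C>0$. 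From $|\nabla f|^{2}=C-R\leq C$ we get $|\nabla f|\leq \sqrt{C}$, so $f$ has linear growth rate $a\leq \sqrt{C}$.

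Next I would show the spectral bound is attained. A direct computation using $\Delta f=-R$ and $|\nabla f|^{2}=C-R$ gives
\[
\Delta _{f}\!\left( e^{f/2}\right) =-\tfrac{1}{4}\,(R+C)\,e^{f/2}\leq -\tfrac{1}{4}\,C\,e^{f/2},
\]
so $e^{f/2}$ is a positive supersolution for the operator $\Delta _{f}+\frac{1}{4}C$. By the standard fact that the existence of such a supersolution bounds the bottom spectrum from below, $\lambda _{1}(M)\geq \frac{1}{4}C$. Combining with $\lambda _{1}(M)\leq \frac{1}{4}a^{2}\leq \frac{1}{4}C$ forces $\lambda _{1}(M)=\frac{1}{4}a^{2}=\frac{1}{4}C$, so we land exactly in the equality case of Theorem \ref{Estimate}.

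Finally I would eliminate the splitting alternative. Suppose $M$ were isometric to $\mathbb{R}\times N$ with $N$ compact. Writing $\partial _{t}$ for the unit field along the $\mathbb{R}$ factor, the product structure gives $Ric(\partial _{t},\cdot )=0$, so the soliton equation forces $Hess(f)(\partial _{t},\cdot )=0$; integrating shows $f=a\,t+h(y)$ for a function $h$ on $N$, and restricting the soliton equation to $N$ makes $(N,g_{N},h)$ a compact gradient steady soliton. Compact steady solitons are trivial, hence $N$ is Ricci flat and $h$ is constant, so $M$ is Ricci flat and $R\equiv 0$, contradicting $R>0$. Therefore $M$ must be connected at infinity. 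I expect the main obstacle to be the exact determination of $\lambda _{1}(M)$, namely the lower bound $\lambda _{1}(M)\geq \frac{1}{4}C$ and its matching with the growth rate $a$, since the upper bound and the structural dichotomy are handed to us by Theorem \ref{Estimate}; the supersolution $e^{f/2}$ together with $R\geq 0$ is precisely what makes the sandwiching work.
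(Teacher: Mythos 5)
Your proposal is correct and follows essentially the same route as the paper: the supersolution $e^{f/2}$ combined with $\Delta f=-R$ and $R+|\nabla f|^{2}=\mathrm{const}$ is exactly how Proposition \ref{Spec_Steady} pins down $\lambda_{1}(M)=\frac{1}{4}a^{2}$, and the cylinder is excluded as in Theorem \ref{S} by noting the cross-section would be a compact (hence Ricci flat) steady soliton. The only cosmetic difference is that you derive the normalization $a=\sqrt{C}$ from the two-sided spectral sandwich rather than assuming it up front, and you make explicit the strong maximum principle step $R>0$ for nontrivial solitons.
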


This is because for a steady gradient Ricci soliton $\left( M,g,f\right) $
such that $Ric_{f}=0$ on $M$ and $\sup |\nabla f|=a,$ one can show $\lambda
_{1}\left( M\right) =\frac{1}{4}a^{2}.$ Note now that the splitting case can
not arise as otherwise $Hess(f)=0$ and $M$ would be Ricci flat.

Historically, Cheng \cite{Cg} proved a sharp upper bound of the bottom
spectrum of the Laplacian on a complete manifold with Ricci curvature
bounded below. Later on, in \cite{LW2} and \cite{LW}, P. Li and the second
author studied the rigidity issue when the sharp upper bound is achieved.
Our results here are very much in the same spirit. Our arguments, however,
follow \cite{LW1} more closely.

The paper is organized as follows. In Section \ref{Vol_Comp} we discuss
Laplacian and volume comparison estimates and establish the upper bound
estimate for the bottom spectrum $\lambda _{1}\left(M\right).$ In Section %
\ref{harmonic} we prove the gradient estimate for positive $f$-harmonic
functions and related Liouville type results concerning $f$-harmonic
functions of polynomial growth. Finally, in Section \ref{Rigid} we discuss
the structure of manifolds with maximal bottom spectrum and the resulting
application to the steady Ricci solitons.

In a sequel to this paper, we will address some similar issues on smooth
metric measure spaces with $Ric_f$ bounded below.

The second author would like to thank Ben Chow for his interest in this work.

\section{Volume comparison theorems\label{Vol_Comp}}

In this section we discuss Laplacian and volume comparison estimates for
smooth metric measure spaces with nonnegative Bakry-\'{E}mery Ricci
curvature. The estimates in this section are instrumental in proving other
results of this paper. Also, as an immediate application, we obtain upper
bound estimates for the bottom spectrum of $\Delta _{f}.$

Let $\left( M,g,e^{-f}dv\right) $ be a smooth metric measure space. Take any
point $x\in M$ and denote the volume form in geodesic coordinates centered
at $x$ with 
\begin{equation*}
dV\left( \exp _{x}\left( r\xi \right) \right) =J\left( x,r,\xi \right)
drd\xi ,
\end{equation*}%
where $r>0$ and $\xi \in S_{x}M,$ a unit tangent vector at $x.$ It is well
known that if $y\in M$ is any point such that $y=\exp _{x}$ $\left( r\xi
\right),$ then 
\begin{equation*}
\Delta d\left( x,y\right) =\frac{J^{\prime }\left( x,r,\xi \right) }{J\left(
x,r,\xi \right) }\text{ \ and \ }\Delta _{f}d\left( x,y\right) =\frac{%
J_{f}^{\prime }\left( x,r,\xi \right) }{J_{f}\left( x,r,\xi \right) },
\end{equation*}%
where $J_{f}\left( x,r,\xi \right) :=e^{-f}J\left( x,r,\xi \right) $ is the
f-volume form in geodesic coordinates. For a fixed point $p\in M$ and $R>0,$
define 
\begin{equation}
A\left( R\right) :=\sup_{x\in B_{p}\left( 3R\right) }\left\vert f\right\vert
\left(x\right).  \label{v1}
\end{equation}%
For a set $\Omega,$ we will denote by $V\left( \Omega \right) $ the volume
of $\Omega $ with respect to the usual volume form $dv,$ and $V_{f}\left(
\Omega \right) $ the f-volume of $\Omega $. The following result has been
established in \cite{Ya}.

\begin{lemma}
\label{Vol_Ric_Pos} Let $\left( M,g,e^{-f}dv\right) $ be a smooth metric
measure space with $Ric_{f}\geq 0.$ Then along any minimizing geodesic
starting from $x\in B_{p}\left(R\right)$ we have 
\begin{equation*}
\frac{J_{f}\left( x,r_{2},\xi \right) }{J_{f}\left( x,r_{1},\xi \right) }%
\leq e^{4A}\left( \frac{r_{2}}{r_{1}}\right) ^{n-1}\text{ and \ }\frac{%
J\left( x,r_{2},\xi \right) }{J\left( x,r_{1},\xi \right) }\leq e^{6A}\left( 
\frac{r_{2}}{r_{1}}\right) ^{n-1}
\end{equation*}%
for any $0<r_{1}<r_{2}<R.$ In particular, for any $0<r_{1}<r_{2}<R,$ 
\begin{equation*}
\frac{V_{f}\left( B_{x}\left( r_{2}\right) \right) }{V_{f}\left( B_{x}\left(
r_{1}\right) \right) }\leq e^{4A}\left( \frac{r_{2}}{r_{1}}\right) ^{n}\text{
and \ }\frac{V\left( B_{x}\left( r_{2}\right) \right) }{V\left( B_{x}\left(
r_{1}\right) \right) }\leq e^{6A}\left( \frac{r_{2}}{r_{1}}\right) ^{n}.
\end{equation*}
Here, $A=A(R)$ as defined by (\ref{v1}).
\end{lemma}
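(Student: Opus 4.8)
The plan is to reduce the statement to a one–dimensional Riccati comparison along minimizing geodesics and then integrate. Fix $x\in B_p(R)$ and a unit vector $\xi\in S_xM$, let $r$ denote arclength along $\gamma(r)=\exp_x(r\xi)$ (for $r$ smaller than the cut distance in the direction $\xi$), and abbreviate $m:=\Delta r=J'/J$ and $m_f:=\Delta_f r=J_f'/J_f$. Since $J_f=e^{-f}J$ we have $m_f=m-f_r$ with $f_r:=\partial_r f$. Applying the Bochner formula to the distance function $r$ itself, and using $|\nabla r|\equiv1$ (so the left side vanishes), $\langle\nabla r,\nabla\Delta_f r\rangle=\partial_r(\Delta_f r)=m_f'$, and the fact that $Hess\,r$ lives on $(\nabla r)^\perp$ with trace $m$ (whence $|Hess\,r|^2\ge m^2/(n-1)$ by Cauchy–Schwarz), the hypothesis $Ric_f\ge0$ gives the Riccati inequality
\[
m_f'\le-\frac{m^2}{n-1},\qquad\text{equivalently}\qquad m'+\frac{m^2}{n-1}\le f_{rr}.
\]
Since $x\in B_p(R)$ and $r<R$, the geodesic $\gamma$ stays inside $B_p(3R)$, so $|f|\le A$ holds along all of it.

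Next I would compare with the Euclidean model $\bar m:=(n-1)/r$, which satisfies $\bar m'+\bar m^2/(n-1)=0$. Subtracting this from the second form above and multiplying by the integrating factor $\mu:=r\,J^{1/(n-1)}$, chosen so that $\mu'/\mu=(m+\bar m)/(n-1)$, turns the inequality into
\[
\bigl(\mu\,(m-\bar m)\bigr)'\le\mu\,f_{rr}.
\]
Because $\mu$ and $m-\bar m$ both tend to $0$ as $r\to0$, I can integrate from $0$ and then integrate by parts once. The boundary term this produces is $\mu(r)f_r(r)$, which is exactly $\mu$ times the gradient term in $m=m_f+f_r$; dividing by $\mu$ it cancels against $m$ and leaves, with no isolated $f_r$ remaining,
\[
(m_f-\bar m)(r)\le-\frac{1}{\mu(r)}\int_0^r\mu'(s)\,f_s(s)\,ds.
\]

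The heart of the matter, and the step I expect to be the main obstacle, is to estimate the right–hand side using only the $C^0$ bound $|f|\le A$ and never a bound on $|\nabla f|$, which is not available. A further integration by parts replaces $f_s$ by $f$; since $\mu'(0)=0$ the new boundary term is harmless, and careful bookkeeping of the remaining weighted integral of $f$ is what produces the factor $e^{4A}$ upon exponentiation. Exponentiating the resulting bound on $\int_{r_1}^{r_2}(m_f-\bar m)\,dr$ gives the first Jacobian estimate
\[
\frac{J_f(x,r_2,\xi)}{J_f(x,r_1,\xi)}=\exp\!\Big(\int_{r_1}^{r_2}m_f\,dr\Big)\le e^{4A}\Big(\frac{r_2}{r_1}\Big)^{n-1}.
\]
The second then follows at once from $J=e^{f}J_f$ and $|f|\le A$:
\[
\frac{J(x,r_2,\xi)}{J(x,r_1,\xi)}=e^{f(r_2)-f(r_1)}\frac{J_f(x,r_2,\xi)}{J_f(x,r_1,\xi)}\le e^{2A}\,e^{4A}\Big(\frac{r_2}{r_1}\Big)^{n-1}=e^{6A}\Big(\frac{r_2}{r_1}\Big)^{n-1}.
\]

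Finally I would pass from these pointwise–in–$\xi$ Jacobian bounds to the volume ratios by the standard Bishop–Gromov integration. Extending $J_f(x,\cdot,\xi)$ by zero beyond the cut locus and integrating over $\xi\in S_xM$ gives the same ratio bound for the $f$–area of geodesic spheres; the bound says precisely that $J_f(x,r,\xi)/r^{n-1}$ can grow by at most the factor $e^{4A}$, so averaging against the weight $r^{n-1}$ and integrating in $r$ (which raises the exponent from $n-1$ to $n$ while preserving the constant) yields $V_f(B_x(r_2))/V_f(B_x(r_1))\le e^{4A}(r_2/r_1)^n$, and identically with $e^{6A}$ for the unweighted volume. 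The only routine points to check are the cut–locus truncation (handled, as usual, via the barrier sense of $\Delta_f r$ and the vanishing of $J_f$ past the cut locus) and that the monotone–quotient averaging keeps the constant $e^{4A}$ rather than squaring it.
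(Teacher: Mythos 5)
Your overall strategy is the same as the paper's (derive an $f$-Laplacian comparison for $m_f=\Delta_f r$ from the Riccati/Bochner identity, integrate in $r$, and trade derivatives of $f$ for $f$ itself by integration by parts so that only $|f|\le A$ is used), but the step you yourself flag as ``the heart of the matter'' does not go through with your choice of integrating factor, and that is precisely where the lemma lives. With $\mu=rJ^{1/(n-1)}$, the second integration by parts gives
\[
m_f(r)-\bar m(r)\;\le\;-\frac{\mu'(r)}{\mu(r)}f(r)+\frac{1}{\mu(r)}\int_0^r\mu''(s)f(s)\,ds .
\]
Here $\mu'/\mu=(m+\bar m)/(n-1)$ reintroduces the unknown mean curvature $m$ on the right-hand side, and $\mu''$ involves $m'$, which has no sign and no a priori size control; so $\int_0^r|\mu''|$ cannot be bounded by $\mu'(r)$ or by anything expressible through $A$ alone. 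The bound ``$|f|\le A$ plus bookkeeping $\Rightarrow e^{4A}$'' therefore cannot be extracted from this expression. The fix is to abandon the Bishop-type integrating factor and complete the square against the Euclidean weight $s^2$: from $m'+m^2/(n-1)\le f_{rr}$ one gets $\left(s^2m\right)'\le(n-1)+s^2f_{ss}$, and two integrations by parts yield the paper's inequality (\ref{v2}), $m_f(r)\le\frac{n-1}{r}-\frac{2}{r}f(r)+\frac{2}{r^2}\int_0^rf(t)\,dt$, in which only $f$ (not $f'$, not $m$) appears and every coefficient is explicit.

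Even granting a pointwise bound of this type, there is a second place where you are too quick: you cannot simply estimate each term by $|f|\le A$ and integrate, since $m_f\le\frac{n-1+4A}{r}$ would only give $\left(r_2/r_1\right)^{\,n-1+4A}$, a power of the ratio rather than the constant $e^{4A}$. One must integrate (\ref{v2}) exactly in $r$ from $r_1$ to $r_2$ and perform one further integration by parts, which collapses everything to
\[
\log\frac{J_f(x,r_2,\xi)}{J_f(x,r_1,\xi)}\le(n-1)\log\frac{r_2}{r_1}+\frac{2}{r_1}\int_0^{r_1}f-\frac{2}{r_2}\int_0^{r_2}f\le(n-1)\log\frac{r_2}{r_1}+4A,
\]
which is the paper's (\ref{v4}). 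Your passage from the Jacobian ratios to the volume ratios (integrating the two-variable inequality in $t$ and $s$, noting the constant is not squared) and the $e^{6A}$ bound for the unweighted $J$ are fine and match the paper.
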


\begin{proof}[Proof of Lemma \protect\ref{Vol_Ric_Pos}]
We include a proof here for the reader's convenience. Some of the
ingredients of this proof will also be used later.

Let $\gamma $ be the minimizing geodesic from $x$ to $y$ such that $\gamma
\left( 0\right) =x$ and $\gamma \left( r\right) =y.$ Recall the following
Laplace comparison theorem \cite{FLZ, W}, 
\begin{equation}
\Delta _{f}d\left( x,y\right) \leq \frac{n-1}{r}-\frac{2}{r^{2}}%
\int_{0}^{r}tf^{\prime }\left( t\right) dt,  \label{v0}
\end{equation}%
where $f\left( t\right) :=f\left( \gamma \left( t\right) \right) $.
Integrating by parts, we get%
\begin{equation}
\Delta _{f}d\left( x,y\right) \leq \frac{n-1}{r}-\frac{2}{r}f\left( r\right)
+\frac{2}{r^{2}}\int_{0}^{r}f\left( t\right) dt.  \label{v2}
\end{equation}
For $0<r_{1}<r_{2}<R$, integrating (\ref{v2}) from $r_1$ to $r_2$ yields 
\begin{gather}
\log \left( \frac{J_{f}\left( x,r_{2},\xi \right) }{J_{f}\left( x,r_{1},\xi
\right) }\right) \leq \left( n-1\right) \log \left( \frac{r_{2}}{r_{1}}%
\right)  \label{v3} \\
+2\int_{r_{1}}^{r_{2}}\frac{1}{r^{2}}\left( \int_{0}^{r}f\left( t\right)
dt\right) dr-2\int_{r_{1}}^{r_{2}}\frac{1}{r}f\left( r\right) dr.  \notag
\end{gather}%
However, 
\begin{equation*}
\int_{r_{1}}^{r_{2}}\frac{1}{r^{2}}\left( \int_{0}^{r}f\left( t\right)
dt\right) dr=-\frac{1}{r}\left( \int_{0}^{r}f\left( t\right) dt\right)
|_{r_{1}}^{r_{2}}+\int_{r_{1}}^{r_{2}}\frac{1}{r}f\left( r\right) dr.
\end{equation*}%
Plugging into (\ref{v3}), we conclude%
\begin{equation}
\frac{J_{f}\left( x,r_{2},\xi \right) }{J_{f}\left( x,r_{1},\xi \right) }%
\leq \left( \frac{r_{2}}{r_{1}}\right) ^{n-1} \exp\left(\frac{2}{r_{1}}%
\int_{0}^{r_{1}}f(t)dt- \frac{2}{r_{2}}\int_{0}^{r_{2}}f(t) dt\right).
\label{v4}
\end{equation}
Therefore, 
\begin{equation*}
\frac{J_{f}\left( x,r_{2},\xi \right) }{J_{f}\left( x,r_{1},\xi \right) }%
\leq e^{4A}\left( \frac{r_{2}}{r_{1}}\right) ^{n-1}
\end{equation*}%
for all $x\in B_{p}\left( R\right) $ and $0<r_{1}<r_{2}<R.$ Clearly, the
corresponding result for $J\left( x,r,\xi \right) $ follows by using again
the definition (\ref{v1}).

To establish the volume comparison, we use that 
\begin{equation*}
\frac{J_{f}\left( x,t,\xi \right) }{J_{f}\left( x,s,\xi \right) }\leq
e^{4A}\left( \frac{t}{s}\right) ^{n-1}
\end{equation*}%
for any $0<s<r_{1}<t<r_{2}<R.$ Integrating in $t$ from $r_{1}$ to $r_{2}$
and $s$ from $0$ to $r_{1},$ we get 
\begin{equation*}
\frac{V_{f}\left( B_{x}\left( r_{2}\right) \right) -V_{f}\left( B_{x}\left(
r_{1}\right) \right) }{V_{f}\left( B_{x}\left( r_{1}\right) \right) }\leq
e^{4A}\frac{\left( r_{2}\right) ^{n}-\left( r_{1}\right) ^{n}}{\left(
r_{1}\right) ^{n}}.
\end{equation*}%
This implies that 
\begin{equation*}
\frac{V_{f}\left( B_{x}\left( r_{2}\right) \right) }{V_{f}\left( B_{x}\left(
r_{1}\right) \right) }\leq e^{4A}\left( \frac{r_{2}}{r_{1}}\right)
^{n}+1-e^{4A}\leq e^{4A}\left( \frac{r_{2}}{r_{1}}\right) ^{n}.
\end{equation*}%
Now the volume comparison for $V\left( B_{x}\left( r\right) \right) $
follows directly from here. The Lemma is proved.
\end{proof}

Using Lemma \ref{Vol_Ric_Pos} we obtain a sharp upper bound for $\lambda
_{1}\left( M\right),$ the bottom spectrum of $\Delta _{f},$ by assuming $f$
is of linear growth. Recall 
\begin{equation*}
\lambda _{1}\left( M\right) :=\inf_{\phi \in C_{0}^{\infty }\left( M\right) }%
\frac{\int_{M}\left\vert \nabla \phi \right\vert ^{2}e^{-f}}{\int_{M}\phi
^{2}e^{-f}}.
\end{equation*}

\begin{theorem}
\label{Est_Ric_Pos} Let $\left( M,g,e^{-f}dv\right) $ be a complete
noncompact smooth metric measure space with $Ric_{f}\geq 0.$ If there exist
positive constants $a,b>0$ such that 
\begin{equation*}
\left\vert f\right\vert \left( x\right) \leq ar\left( x\right) +b\ \ \text{%
for all }x\in M,
\end{equation*}%
then we have the upper bound estimate 
\begin{equation*}
\lambda _{1}\left( M\right) \leq \frac{1}{4}a^{2}.
\end{equation*}%
In particular, if $f$ has sublinear growth, then $\lambda _{1}\left(
M\right) =0.$
\end{theorem}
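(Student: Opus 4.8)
The plan is to bound $\lambda_1(M)$ directly from the variational characterization by exhibiting test functions whose Rayleigh quotient approaches $\frac14 a^2$. The model is Example \ref{E1}: on a cylinder with $f=at$ the minimizer is $e^{\frac12 at}$, and $\left\vert\nabla e^{-\frac12 a r}\right\vert=\frac12 a\,e^{-\frac12 a r}$. This suggests taking $\phi=e^{-\frac12(a+\epsilon)r}$ as a test function. The only global input needed to make this rigorous is that the $f$-volume of balls grows at most at the exponential rate $a$; this controls the weighted $L^2$-norm of $\phi$, and it is precisely where the linear growth hypothesis enters.

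First I would record the sharp $f$-volume growth. The crude volume comparison in Lemma \ref{Vol_Ric_Pos} is insufficient here because its constant $e^{4A(R)}$ already grows like $e^{12aR}$; instead I would use the refined pointwise estimate (\ref{v4}) from the proof of that lemma, applied with center $x=p$. Writing $f(t):=f(\gamma(t))$ along a minimizing geodesic from $p$ and using $\left\vert f(t)\right\vert\le at+b$, the exponent $\frac{2}{r_1}\int_0^{r_1}f(t)dt-\frac{2}{r_2}\int_0^{r_2}f(t)dt$ in (\ref{v4}) is bounded above by $ar_1+ar_2+4b$. Fixing a small $r_1$ and letting $r_2=r$ then gives $J_f(p,r,\xi)\le C\,r^{n-1}e^{ar}$, and integrating over $r\in(0,R)$ and $\xi\in S_pM$ yields $V_f\!\left(B_p(R)\right)\le C\,R^{n-1}e^{aR}$. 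In particular the exponential $f$-volume growth rate is at most $a$.

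Then I would carry out the test-function computation. Set $\phi=e^{-\frac12(a+\epsilon)r}$ for fixed $\epsilon>0$. Since $\left\vert\nabla r\right\vert=1$ almost everywhere, $\left\vert\nabla\phi\right\vert=\frac12(a+\epsilon)\phi$, so pointwise $\left\vert\nabla\phi\right\vert^2=\frac14(a+\epsilon)^2\phi^2$. Writing $\int_M\phi^2 e^{-f}=(a+\epsilon)\int_0^\infty e^{-(a+\epsilon)s}V_f\!\left(B_p(s)\right)ds$ by Fubini and inserting the bound just proved shows the integrand decays like $s^{n-1}e^{-\epsilon s}$, so the integral converges; hence $\phi\in W^{1,2}(e^{-f})$ with $\int_M\left\vert\nabla\phi\right\vert^2 e^{-f}=\frac14(a+\epsilon)^2\int_M\phi^2 e^{-f}$.

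Finally I would pass to compactly supported test functions and take limits. Multiplying $\phi$ by a cutoff $\chi_R$ equal to $1$ on $B_p(R)$, vanishing outside $B_p(2R)$, with $\left\vert\nabla\chi_R\right\vert\le C/R$, produces a compactly supported Lipschitz function; since $\int_M\phi^2 e^{-f}<\infty$, the error terms $\int\phi^2\left\vert\nabla\chi_R\right\vert^2 e^{-f}\le \frac{C}{R^2}\int_{M\setminus B_p(R)}\phi^2 e^{-f}$ tend to $0$, so the Rayleigh quotient of $\phi\chi_R$ converges to $\frac14(a+\epsilon)^2$. Approximating Lipschitz functions by $C_0^\infty$ functions gives $\lambda_1(M)\le\frac14(a+\epsilon)^2$, and letting $\epsilon\to 0$ yields the estimate. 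For the sublinear case the hypothesis holds with $a$ arbitrarily small, so $\lambda_1(M)\le\frac14 a^2$ for every $a>0$, forcing $\lambda_1(M)=0$. The main obstacle is obtaining the sharp rate $a$ rather than a multiple of it: this is what forces the use of the refined comparison (\ref{v4}) together with the cancellation in $\frac{2}{r_1}\int_0^{r_1}f-\frac{2}{r_2}\int_0^{r_2}f$, and one must also dispose of the non-smoothness of $r$ and the non-compact support of $\phi$ through the cutoff-and-approximation argument above.
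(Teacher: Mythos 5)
Your proposal is correct and follows essentially the same route as the paper: both use the refined comparison (\ref{v4}) with $x=p$ and a fixed small $r_{1}$ to extract the sharp $f$-volume growth $V_{f}\left( B_{p}\left( R\right) \right) \leq CR^{n}e^{aR}$, and then test the variational characterization with $e^{-\frac{1}{2}\left( a+\varepsilon \right) r}$ multiplied by a cutoff, letting $R\rightarrow \infty $ and then $\varepsilon \rightarrow 0$. The only difference is expository: the paper builds the cutoff into the test function from the start, while you first verify the global weighted integrability and then dispose of the cutoff errors, which amounts to the same estimate.
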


\begin{proof}[Proof of Theorem \protect\ref{Est_Ric_Pos}]
By setting $x=p,$ $r_{1}=1$ and $r_{2}=R>1$ and noting $\left\vert
f\right\vert ( x) \leq ar\left( x\right) +b$ on $M,$ (\ref{v4}) implies

\begin{equation*}
J_{f}\left( p,R,\xi \right) \leq CR^{n-1}e^{-\frac{2}{R}\int_{0}^{R}f\left(
t\right) dt}\leq CR^{n-1}e^{aR}
\end{equation*}%
for all $R>1.$ Therefore, 
\begin{equation}
V_{f}\left( B_{p}\left( R\right) \right) \leq CR^{n}e^{aR}.  \label{v5}
\end{equation}%
We now claim $\lambda _{1}\left( M\right) \leq \frac{1}{4}a^{2}.$ Indeed,
take a cut-off $\psi $ on $B_{p}\left( R\right) $ such that $\psi =1$ on $%
B_{p}\left( R-1\right) ,$ $\psi =0$ on $M\backslash B_{p}\left( R\right) $
and $\left\vert \nabla \psi \right\vert \leq c.$ Consider $\phi \left(
y\right) :=e^{-\frac{1}{2}\left( a+\varepsilon \right) r\left( y\right)
}\psi \left( y\right) $ as a test function in the variational principle for $%
\lambda _{1}\left( M\right) $. Using the volume growth (\ref{v5}) we get
immediately that $\lambda _{1}\left( M\right) \leq \frac{1}{4}\left(
a+\varepsilon \right) ^{2}.$ Since $\varepsilon >0$ is arbitrary this
implies the desired estimate. The Theorem is proved.
\end{proof}

As indicated in the introduction, the estimate in Theorem \ref{Est_Ric_Pos}
is sharp. We now show that the bottom spectrum of steady Ricci solitons also
attains this upper bound. Recall a gradient steady Ricci soliton is a
manifold $\left( M,g,f\right) $ satisfying $R_{ij}+f_{ij}=0.$ It is known
that there exists a positive constant $a>0$ such that $\left\vert \nabla
f\right\vert ^{2}+S=a^{2},$ where $S$ is the scalar curvature of $M.$ It is
also known that $S\geq 0$ for any gradient steady Ricci soliton \cite{Ca,Ch}%
. Another useful relation is $\Delta f+S=0,$ which is obtained directly from 
$Ric_{f}=0$ by taking trace. In summary, a steady Ricci soliton satisfies
the following.%
\begin{eqnarray}
\left\vert \nabla f\right\vert ^{2}+S &=&a^{2},\text{\ \ for some constant }%
a>0  \label{v6} \\
\Delta f+S &=&0  \notag \\
S &\geq &0.  \notag
\end{eqnarray}

We first recall a well known result.

\begin{lemma}
\label{L1}Let $\left( M,g,e^{-f}dv\right) $ be a smooth metric measure
space. If there exists a positive function $v>0$ such that $\Delta _{f}v\leq
-\lambda v$ for some constant $\lambda >0,$ then $\lambda _{1}\left(
M\right) \geq \lambda .$
\end{lemma}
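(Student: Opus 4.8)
The plan is to prove the inequality directly from the variational characterization of $\lambda_1(M)$ recalled just above; that is, to show that for every $\phi\in C_0^\infty(M)$ one has $\int_M|\nabla\phi|^2e^{-f}\ge\lambda\int_M\phi^2e^{-f}$, whence the stated infimum formula immediately gives $\lambda_1(M)\ge\lambda$. The supersolution $v$ enters through the test quantity $\phi^2/v$, which is legitimate because $v>0$ and $\phi$ is compactly supported, so $\phi^2/v$ is again a compactly supported Lipschitz function.

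First I would rewrite the hypothesis as $v^{-1}\Delta_f v\le-\lambda$, multiply by $\phi^2 e^{-f}\ge 0$, and integrate, obtaining $\lambda\int_M\phi^2e^{-f}\le-\int_M v^{-1}\phi^2\,(\Delta_f v)\,e^{-f}$. Next I would integrate by parts using the symmetry of $\Delta_f$ with respect to the weighted measure (the identity stated in the introduction), converting the right-hand side into $\int_M\langle\nabla(\phi^2/v),\nabla v\rangle\, e^{-f}$. Expanding via $\nabla(\phi^2/v)=2v^{-1}\phi\,\nabla\phi-v^{-2}\phi^2\,\nabla v$ then yields $\lambda\int_M\phi^2e^{-f}\le\int_M\bigl(2v^{-1}\phi\,\langle\nabla\phi,\nabla v\rangle-v^{-2}\phi^2\,|\nabla v|^2\bigr)e^{-f}$.

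The decisive step is the elementary algebraic observation at the end: combining the last inequality with $\int_M|\nabla\phi|^2e^{-f}$ and completing the square, the difference $\int_M|\nabla\phi|^2e^{-f}-\lambda\int_M\phi^2e^{-f}$ is bounded below by $\int_M\bigl|\nabla\phi-v^{-1}\phi\,\nabla v\bigr|^2e^{-f}$, which is manifestly nonnegative. This gives $\int_M|\nabla\phi|^2e^{-f}\ge\lambda\int_M\phi^2e^{-f}$ for all $\phi\in C_0^\infty(M)$, as desired.

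I expect the only genuine technical point to be the justification of the integration by parts with $\phi^2/v$ in place of a smooth compactly supported function: one must check integrability against $e^{-f}dv$ and the absence of boundary terms. Since $\phi$ has compact support $K$ and $v$ is smooth and strictly positive, $v$ is bounded away from $0$ on $K$, so $\phi^2/v$ and its first derivatives are bounded and compactly supported, and the weighted integration-by-parts formula applies verbatim. Everything else is the completion of the square. Notably, no curvature hypothesis is invoked, which is why the lemma is stated for an arbitrary smooth metric measure space.
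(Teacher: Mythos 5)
Your argument is correct, but it is not the route the paper takes. You prove the Rayleigh quotient bound directly: pairing the supersolution inequality against the compactly supported test quantity $\phi^{2}/v$, integrating by parts with respect to $e^{-f}dv$, and completing the square via
\begin{equation*}
\int_{M}\left\vert \nabla \phi \right\vert ^{2}e^{-f}-\lambda \int_{M}\phi
^{2}e^{-f}\geq \int_{M}\left\vert \nabla \phi -v^{-1}\phi \nabla
v\right\vert ^{2}e^{-f}\geq 0.
\end{equation*}
This is the classical ground-state (Agmon-type) substitution, and your justification of the integration by parts is adequate since $v$ is smooth and bounded away from zero on the support of $\phi$. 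The paper instead argues through an exhaustion $\Omega _{i}\subset \subset M$: it takes the first positive Dirichlet eigenfunction $u_{i}$ on $\Omega _{i}$, applies Green's identity to the pair $(u_{i},v)$, and uses the Hopf boundary point lemma ($\partial u_{i}/\partial \eta <0$ on $\partial \Omega _{i}$) to force $\lambda _{1}(\Omega _{i})\geq \lambda $, then lets $i\rightarrow \infty $. Your approach is more self-contained --- it needs only the variational characterization and weighted integration by parts, with no appeal to existence and positivity of Dirichlet eigenfunctions or to the strong maximum principle --- while the paper's version isolates the comparison-of-eigenfunctions mechanism, which generalizes readily to situations where one compares $v$ against eigenfunctions rather than arbitrary test functions. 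Both are complete proofs of the lemma, and neither uses any curvature hypothesis, consistent with the statement.
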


\begin{proof}[Proof of Lemma \protect\ref{L1}]
For completeness, we include a proof of this result. For an exhaustion of $M$
by compact domains $\Omega _{i}\subset \subset M,$ consider the first
Dirichlet eigenfunction $u_{i}.$ 
\begin{eqnarray*}
\Delta _{f}u_{i} &=&-\lambda _{1}\left( \Omega _{i}\right) u_{i}\ \ \ \text{%
in\ \ }\Omega _{i} \\
u_{i} &=&0\ \ \ \ \text{on }\partial \Omega _{i}.
\end{eqnarray*}%
It is known that we may assume $u_{i}>0.$ By the strong maximum principle, $%
\frac{\partial u_{i}}{\partial \eta }<0$ on $\partial \Omega _{i}.$ Now, 
\begin{gather*}
\left( \lambda _{1}\left( \Omega _{i}\right) -\lambda \right) \int_{\Omega
_{i}}vu_{i}e^{-f}\ge \int_{\Omega _{i}}\left( u_{i}\Delta _{f}v-v\Delta
_{f}u_{i}\right) e^{-f} \\
=\int_{\partial \Omega _{i}}\left( u_{i}\frac{\partial v}{\partial \eta }-v%
\frac{\partial u_{i}}{\partial \eta }\right) e^{-f}=-\int_{\partial \Omega
_{i}}v\frac{\partial u_{i}}{\partial \eta }e^{-f}>0.
\end{gather*}%
Since both $u_{i}$ and $v$ are positive, this shows $\lambda _{1}\left(
\Omega _{i}\right) \geq \lambda .$ The Lemma follows from 
\begin{equation*}
\lim_{i\rightarrow \infty }\lambda _{1}\left( \Omega _{i}\right) =\lambda
_{1}\left( M\right).
\end{equation*}
\end{proof}

\begin{proposition}
\label{Spec_Steady} Let $\left( M,g,f\right) $ be a gradient steady Ricci
soliton, normalized as in (\ref{v6}). Then $\lambda _{1}\left( M\right) =%
\frac{a^{2}}{4}.$
\end{proposition}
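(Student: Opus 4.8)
The plan is to squeeze $\lambda_1(M)$ between the bound already proved in Theorem \ref{Est_Ric_Pos} and a matching lower bound coming from Lemma \ref{L1}. For the upper bound, I would first observe that the soliton identities in (\ref{v6}) force $|\nabla f|^2 = a^2 - S \le a^2$, so $|\nabla f| \le a$ everywhere. Integrating along minimizing geodesics from the base point $p$ then gives $|f|(x) \le a\,r(x) + |f|(p)$, so $f$ has linear growth with the constant $a$ and $b = |f|(p)$. Since $Ric_f = 0 \ge 0$, Theorem \ref{Est_Ric_Pos} applies directly and yields $\lambda_1(M) \le \tfrac14 a^2$.

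The heart of the matter is the reverse inequality, which I would obtain by exhibiting an explicit positive supersolution and invoking Lemma \ref{L1}. Guided by the model $\mathbb{R}\times N$ with $f = at$, where $e^{at/2}$ is the extremal function (see the remark following Theorem \ref{Estimate}), the natural candidate is $v := e^{f/2} > 0$. The key computation is to evaluate $\Delta_f v$. Writing out the general identity $\Delta_f\bigl(e^{cf}\bigr) = e^{cf}\bigl[c\,\Delta f + (c^2 - c)|\nabla f|^2\bigr]$ and then substituting the soliton relations $\Delta f = -S$ and $|\nabla f|^2 = a^2 - S$ from (\ref{v6}), with the choice $c = \tfrac12$, I expect everything to collapse to
\[
\Delta_f\bigl(e^{f/2}\bigr) = -\tfrac14\,(a^2 + S)\,e^{f/2}.
\]
Since $S \ge 0$ by (\ref{v6}), this immediately gives $\Delta_f v \le -\tfrac14 a^2\, v$ on all of $M$.

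With this supersolution in hand, Lemma \ref{L1} (applied with $\lambda = \tfrac14 a^2$) yields $\lambda_1(M) \ge \tfrac14 a^2$. Combining with the upper bound from the previous paragraph finishes the proof, $\lambda_1(M) = \tfrac14 a^2$. I do not anticipate a serious obstacle here: the only real content is recognizing that $e^{f/2}$ is the correct test function and carrying out the one-line computation using the three soliton identities. The sign $S \ge 0$ is exactly what makes the inequality go the right way, and it also shows the lower bound is driven entirely by the $a^2$ term, with the scalar curvature contribution only reinforcing it.
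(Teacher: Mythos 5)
Your proposal is correct and follows essentially the same route as the paper: the upper bound comes from $|\nabla f|\le a$ (hence linear growth of $f$) together with Theorem \ref{Est_Ric_Pos}, and the lower bound from applying Lemma \ref{L1} to the supersolution $e^{f/2}$, whose $f$-Laplacian you compute to be $-\tfrac14(a^2+S)e^{f/2}\le -\tfrac14 a^2 e^{f/2}$ using the soliton identities and $S\ge 0$. The paper organizes the same computation via $\Delta_f(f)=-a^2$ and $|\nabla f|\le a$, but the content is identical.
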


\begin{proof}[Proof of Proposition \protect\ref{Spec_Steady} ]
Since $S\geq 0,$ it follows that $\left\vert \nabla f\right\vert \leq a.$
Therefore, $\left\vert f \right\vert \left( x\right) \leq ar\left( x\right)
+b $ for any $x\in M.$ So by Theorem \ref{Est_Ric_Pos}, $\lambda _{1}\left(
M\right) \leq \frac{1}{4}a^{2}.$ To show the equality, we proceed as
follows. Observe that 
\begin{equation*}
\Delta _{f}e^{\frac{1}{2}f}=\left( \frac{1}{2}\Delta _{f}\left( f\right) +%
\frac{1}{4}\left\vert \nabla f\right\vert ^{2}\right) e^{\frac{1}{2}f}.
\end{equation*}%
But for a steady soliton, 
\begin{equation*}
\Delta _{f}\left( f\right) =\Delta f-\left\vert \nabla f\right\vert
^{2}=-\left( S+\left\vert \nabla f\right\vert ^{2}\right) =-a^{2}.
\end{equation*}%
Since $\left\vert \nabla f\right\vert \leq a,$ we conclude that 
\begin{gather*}
\Delta _{f}e^{\frac{1}{2}f}=\left( -\frac{1}{2}a^{2}+\frac{1}{4}\left\vert
\nabla f\right\vert ^{2}\right) e^{\frac{1}{2}f} \\
\leq \left( -\frac{1}{2}a^{2}+\frac{1}{4}a^{2}\right) e^{\frac{1}{2}f}=-%
\frac{a^{2}}{4}e^{\frac{1}{2}f}.
\end{gather*}%
By Lemma \ref{L1}, we have $\lambda _{1}\left( M\right) \geq \frac{a^{2}}{4}%
. $ This proves Proposition \ref{Spec_Steady}.
\end{proof}

\section{\ $f$-harmonic functions\label{harmonic}}

In this section we continue to assume $Ric_{f}\geq 0$ on $\left(
M,g,e^{-f}dv\right).$ The main objective is to derive the following global
gradient estimate for positive $f$-harmonic functions defined on $M.$ This
in particular leads to a strong Liouville theorem under optimal growth
assumption on $f.$

For a fixed point $p\in M,$ we let $r\left( x\right) :=d\left( p,x\right) .$

\begin{theorem}
\label{Grad_Est} Let $\left( M,g,e^{-f}dv\right) $ be a complete noncompact
smooth metric measure space with $Ric_{f}\geq 0$. Assume there exist
positive constants $a$ and $b$ such that 
\begin{equation*}
\left\vert f \right\vert \left( x\right) \leq a\, r\left( x\right) +b\ \text{%
on} \ M.
\end{equation*}%
Let $u>0$ be $f$-harmonic on $M.$ Then the following gradient estimate holds
true on $M.$%
\begin{equation*}
\left\vert \nabla \log u\right\vert \leq C\left( n\right) a,
\end{equation*}%
where constant $C\left( n\right) $ depending only on $n,$ the dimension of $%
M.$
\end{theorem}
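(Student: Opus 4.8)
The plan is to split the argument into two independent pieces, following the roadmap indicated in the introduction: a Bochner-type gradient bound that is valid whenever $u$ grows at most exponentially (and needs no hypothesis on $f$), together with a growth bound for $u$ that does use the linear growth of $f$.

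First I would run a refined version of Brighton's argument, working with a power $w:=u^{\epsilon}$, $0<\epsilon<1$, rather than with $\log u$. The reason is the identity
\begin{equation*}
\Delta_{f}w=\epsilon(\epsilon-1)u^{\epsilon-2}|\nabla u|^{2}=-\frac{1-\epsilon}{\epsilon}\,\frac{|\nabla w|^{2}}{w},
\end{equation*}
which makes $w$ an $f$-superharmonic function whose $f$-Laplacian is expressed purely through $|\nabla w|^{2}/w$. Applying the Bochner formula stated in the introduction to $w$, using $Ric_{f}\ge 0$ and $|Hess\,w|^{2}\ge \frac{1}{n}(\Delta w)^{2}$ together with $\Delta w=\Delta_{f}w+\langle\nabla f,\nabla w\rangle$, the troublesome first-order term in $\nabla f$ now enters multiplied by a coefficient that can be absorbed into the good Hessian term once the free parameter $\epsilon$ is chosen suitably. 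This is exactly the gain over the $\log u$ computation, where the analogous $\nabla f$ contribution cannot be absorbed and forces one to assume $|\nabla f|$ bounded. Feeding the resulting differential inequality for $|\nabla w|^{2}/w^{2}=\epsilon^{2}|\nabla\log u|^{2}$ into the maximum principle on $B_{p}(2R)$ with a standard cutoff, the only contribution that does not vanish as $R\to\infty$ is governed by the growth of $\sup_{B_{p}(2R)}w$; under exponential growth of $u$ this yields $|\nabla\log u|(p)\le C(n)\gamma$, where $\gamma$ is the exponential growth rate of $u$. In particular sub-exponential growth forces $|\nabla\log u|\equiv 0$.

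Second, I would bound the exponential growth rate of $u$ itself by $C(n)a$, and this is where the hypothesis $|f|\le ar+b$ is used. By Lemma \ref{Vol_Ric_Pos} and the estimate \eqref{v5}, on $B_{p}(3R)$ the $f$-volume comparison constants have size $e^{cA}$ with $A\le 3aR+b$, so volume doubling and a Poincar\'e inequality hold with constants that degrade only like $e^{caR}$; these yield the Sobolev inequality needed to run Moser iteration for the $f$-subharmonic functions $u$ and $u^{p}$ against the measure $e^{-f}dv$. The outcome is a mean value inequality, and hence a scale-$R$ Harnack inequality of the form $\sup_{B_{p}(R)}u\le e^{C(n)(aR+1)}\inf_{B_{p}(R)}u$. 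Since $\inf_{B_{p}(R)}u\le u(p)$, this says precisely that $u$ grows at most exponentially with rate at most $C(n)a$.

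Combining the two pieces gives $|\nabla\log u|(p)\le C(n)\cdot C(n)a=C(n)a$, and since $p$ is arbitrary the estimate holds on all of $M$; the sublinear case follows by letting $a\to 0$. I expect the main obstacle to lie in the second piece rather than the first: the Bochner computation is a delicate but essentially mechanical choice of $\epsilon$ and cutoff, whereas certifying that $u$ is only exponentially — and not super-exponentially — large, with the correct linear dependence of the rate on $a$, requires setting up the full De Giorgi--Nash--Moser machinery on the weighted space and tracking how the $e^{cA}$ factors of Lemma \ref{Vol_Ric_Pos} propagate through the Sobolev constant into the scale-dependent Harnack constant. This is also the step that genuinely needs the growth assumption on $f$, consistent with the remark that the direct $\log u$ approach breaks down without control on $|\nabla f|$.
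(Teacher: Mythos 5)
Your first step is, in substance, the paper's Proposition \ref{P}: apply the Bochner formula to a small power $u^{\epsilon}$, exploit that its $f$-Laplacian is an explicit multiple of $u^{-\epsilon}\left\vert \nabla u^{\epsilon}\right\vert ^{2}$, run a cutoff maximum principle, and then choose $\epsilon \sim \left( \sup_{B_{p}\left( 2R\right) }\log \left( u+1\right) \right) ^{-1}$ so that $u^{2\epsilon }$ stays bounded. Two details of your description are off, though neither is fatal: the good quadratic term comes directly from $\left\langle \nabla h,\nabla \left( \Delta _{f}h\right) \right\rangle $ after simply discarding $\left\vert h_{ij}\right\vert ^{2}\geq 0$, so no appeal to $\left\vert Hess\,h\right\vert ^{2}\geq \frac{1}{n}\left( \Delta h\right) ^{2}$ and no absorption of a $\nabla f$ term is needed (that route would reintroduce an uncontrolled $\left\vert \nabla f\right\vert ^{2}\left\vert \nabla h\right\vert ^{2}$ error); and the linear growth of $f$ \emph{does} enter this step, via the bound $\Delta _{f}r\leq 4a$ outside a compact set used on the cutoff, which is precisely what produces the cross term $a\,\Omega \left( u\right) $ in the estimate $\sup_{M}\left\vert \nabla \log u\right\vert ^{2}\leq C\left( n\right) \left( \Omega \left( u\right) ^{2}+a\Omega \left( u\right) \right) $.

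The genuine gap is in your second step. You propose to bound $\Omega \left( u\right) $ by a Harnack inequality $\sup_{B_{p}\left( R\right) }u\leq e^{C\left( n\right) \left( aR+1\right) }\inf_{B_{p}\left( R\right) }u$ extracted from the Moser machinery. But the doubling, Poincar\'{e} and Sobolev constants furnished by Lemmas \ref{Vol_Ric_Pos}, \ref{NP} and \ref{NS} on $B_{p}\left( R\right) $ are themselves of size $e^{cA}$ with $A=A\left( R\right) \approx 3aR$, and the full Harnack constant is \emph{not} polynomial in these inputs: the crossover from positive to negative powers of $u$ goes through John--Nirenberg (or Bombieri--Giusti) applied to $\log u$, whose BMO norm is controlled only by the square root of the Poincar\'{e} constant, i.e.\ by $e^{cA}$; the admissible crossover exponent is then $p_{0}\sim e^{-cA}$, and the iteration constants $\left( cC_{S}\right) ^{\nu /2p_{0}}$ blow up like $\exp \left( cAe^{cA}\right) $. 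That is doubly exponential in $aR$ and gives no bound on $\Omega \left( u\right) $ whatsoever. The usual rescue --- chaining uniform-constant Harnack inequalities over small balls --- also fails here, because the comparison constants on $B_{x}\left( \rho \right) $ depend on $\sup_{B_{x}\left( 2\rho \right) }\left\vert f\right\vert $ rather than on the oscillation of $f$, so they remain of size $e^{caR}$ for $x$ at distance $R$ from $p$ no matter how small $\rho $ is. This is exactly why the paper avoids the Harnack inequality altogether: it uses only the one-sided mean value inequality $\sup_{B_{p}\left( R/2\right) }u\leq c_{1}e^{c_{2}A}V\left( B_{p}\left( R\right) \right) ^{-1}\int_{B_{p}\left( R\right) }u$ (whose constant is polynomial in the Sobolev constant, hence $e^{c\left( n\right) aR}$), and controls the average by an elementary argument special to $f$-harmonic functions: integrating $u\,\Delta _{f}r$ over annuli and using Stokes' theorem, $\Delta _{f}u=0$ and $\Delta _{f}r\leq 4a$ to obtain $U^{\prime }\left( r\right) \leq 4aU\left( r\right) +C_{0}$ for $U\left( r\right) =\int_{B_{p}\left( r\right) \backslash B_{p}\left( r_{0}\right) }ue^{-f}$, whence $\int_{B_{p}\left( R\right) }u\leq Ce^{5aR}$. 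To complete your argument you would need either this (or some other) direct integral growth estimate, or a justification of a singly exponential Harnack constant, which the standard De Giorgi--Nash--Moser chain does not supply in this setting.
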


An immediate consequence is the following strong Liouville property. As
noted by the explicit example in first section, the growth assumption on $f$
is optimal.

\begin{corollary}
Let $\left( M,g,e^{-f}dv\right) $ be a complete noncompact smooth metric
measure space with $Ric_{f}\geq 0.$ If $f$ is of sublinear growth, then any
positive $f$-harmonic function $u$ on $M$ is constant.
\end{corollary}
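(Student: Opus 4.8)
The plan is to deduce this Corollary directly from the gradient estimate of Theorem~\ref{Grad_Est}, which I am free to assume. The single feature to exploit is that the constant $C(n)$ in that estimate depends only on the dimension, and in particular is completely independent of the growth constants $a$ and $b$ of $f$. This independence is exactly what makes a limiting argument in the growth rate possible.

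First I would unwind the meaning of sublinear growth. Saying that $f$ has sublinear growth means precisely that its linear growth rate is zero; equivalently, for every $\varepsilon > 0$ there is a constant $\beta_{\varepsilon} > 0$ such that
\[
|f|(x) \le \varepsilon\, r(x) + \beta_{\varepsilon} \qquad \text{for all } x \in M .
\]
Thus for each fixed $\varepsilon > 0$ the hypotheses of Theorem~\ref{Grad_Est} are met with $a = \varepsilon$ and $b = \beta_{\varepsilon}$. Since $u > 0$ is $f$-harmonic, the theorem yields the pointwise bound
\[
|\nabla \log u|(x) \le C(n)\,\varepsilon \qquad \text{for all } x \in M ,
\]
with one and the same dimensional constant $C(n)$ for every choice of $\varepsilon$.

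The last step is to let $\varepsilon \to 0^{+}$. The left-hand side above does not depend on $\varepsilon$, while the right-hand side tends to $0$; hence $|\nabla \log u| \equiv 0$ on $M$, so $\log u$ and therefore $u$ is constant, which is the assertion. There is essentially no obstacle here, as all the analytic content resides in Theorem~\ref{Grad_Est}; the only point that must be verified with care is that the constant in that estimate is uniform in the growth data, so that passing to the limit $\varepsilon \to 0$ is legitimate. (That the conclusion is sharp, i.e.\ that it genuinely fails once $f$ is permitted true linear growth, is already witnessed by Example~\ref{E1}, where $u = e^{at}$ is a nonconstant positive $f$-harmonic function with $|\nabla \log u| = a$.)
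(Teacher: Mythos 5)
Your argument is correct and coincides with the paper's own reasoning: the authors deduce the corollary by noting that for sublinear $f$ the constant $a$ in Theorem~\ref{Grad_Est} can be taken arbitrarily small while $C(n)$ is fixed, forcing $\left\vert \nabla \log u\right\vert =0$. Your explicit emphasis that $C(n)$ is uniform in the growth data $a,b$ is exactly the point that legitimizes the limit, so nothing is missing.
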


We prove Theorem \ref{Grad_Est} in several steps. First, we show that $%
\left\vert \nabla \log u\right\vert $ can be controlled from above by $\frac{%
1}{R}\sup_{B_{p}\left( R\right) }u$. This follows by adapting Yau's \cite{Y}
argument. We then verify that $u$ must be of exponential growth with the
exponent controlled by the constant $a.$ To achieve this, we use the Moser
iteration technique, following the ideas in \cite{G,SC}.

\begin{proposition}
\label{P} Let $\left( M,g,e^{-f}dv\right) $ be a complete noncompact smooth
metric measure space with $Ric_{f}\geq 0.$ Assume that there exist constants 
$a>0$ and $b>0$ such that 
\begin{equation*}
\left\vert f\right\vert \left( x\right) \leq ar\left( x\right) +b\ \ \text{
on } M.
\end{equation*}%
Then for any positive $f$-harmonic function $u$ on $M,$ we have 
\begin{equation*}
\sup_{M}\left\vert \nabla \log u\right\vert ^{2}\leq C\left( n\right) \left(
\Omega \left( u\right) ^{2}+a\Omega \left( u\right) \right),
\end{equation*}
where $C\left( n\right)>0$ is a constant only depending on the dimension $n$
of $M$ and 
\begin{equation*}
\Omega \left( u\right) :=\underset{R\rightarrow \infty }{\limsup}\left\{ 
\frac{1}{R}\sup_{B_{p}\left( R\right) }\log \left( u+1\right) \right\}.
\end{equation*}
\end{proposition}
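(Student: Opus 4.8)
The plan is to follow Brighton's device of applying the Bochner formula not to $\log u$ but to a power $v:=u^{\epsilon}$, and to choose $\epsilon$ so that the induced nonlinearity yields a clean differential inequality insensitive to $\nabla f$. Since $\Delta_f u=0$, a direct computation gives
\[
\Delta_f v=\frac{\epsilon-1}{\epsilon}\,\frac{|\nabla v|^2}{v}.
\]
Feeding this into the Bochner formula of the introduction and discarding the curvature term by $Ric_f\ge0$, I would use \emph{only} the Cauchy--Schwarz bound $|Hess(v)(\nabla v,\nabla v)|\le|Hess(v)|\,|\nabla v|^2$ to absorb the cross term $\langle\nabla v,\nabla|\nabla v|^2\rangle$ against the available $2|Hess(v)|^2$. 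After this absorption the special structure of $\Delta_f v$ should leave a genuinely positive zeroth-order term, producing a pointwise inequality of the form
\[
\Delta_f|\nabla v|^2\ \ge\ c(\epsilon)\,\frac{|\nabla v|^4}{v^2},
\]
where $c(\epsilon)>0$ for $\epsilon$ in a suitable range such as $\tfrac12<\epsilon<1$. The key feature, and the whole point of the power substitution, is that this is obtained \emph{without} invoking the trace inequality $|Hess(v)|^2\ge(\Delta v)^2/n$; indeed $\Delta v=\Delta_f v+\langle\nabla f,\nabla v\rangle$, and the hypothesis $|f|\le ar+b$ gives no pointwise control on $|\nabla f|$, so any route through $\Delta v$ would be defeated by the uncontrolled drift. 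Thus $P:=|\nabla v|^2$ is a nonnegative $f$-subsolution carrying the extra good term $c(\epsilon)P^2/v^2$.

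To localize, I would multiply this inequality by a cutoff $\phi^2$ and integrate against $e^{-f}$; by the self-adjointness of $\Delta_f$ the left side becomes $\int|\nabla v|^2\,\Delta_f\phi^2\,e^{-f}$. Taking $\phi=1$ on $B_p(R)$, supported in $B_p(2R)$ with $|\nabla\phi|\le c/R$, the factor $\Delta_f\phi^2$ is controlled through the Laplacian comparison (\ref{v2}): since under linear growth $\Delta_f r\le\frac{n-1}{r}-\frac2r f(r)+\frac{2}{r^2}\int_0^r f(t)\,dt\le\frac{n-1}{r}+Ca$, one obtains $|\Delta_f\phi^2|\le C\big(\frac1{R^2}+\frac aR\big)$. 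This is precisely where the two scales $\frac1{R^2}$ and $\frac aR$ enter, and, after passing to the limit, they are responsible for the $\Omega(u)^2$ and the $a\,\Omega(u)$ terms respectively.

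Finally, to upgrade from an integral estimate to the supremum I would use that $P$ is a nonnegative $f$-subsolution: a weighted mean-value (Moser) inequality — whose ingredients, volume doubling and the attendant local Sobolev inequality, are furnished by Lemma \ref{Vol_Ric_Pos} in the spirit of \cite{G,SC} — bounds $\sup_{B_p(R)}P$ by a weighted average of $P$ over $B_p(2R)$. Combining this with the integrated good term and with $\sup_{B_p(2R)}v=\big(\sup_{B_p(2R)}u\big)^{\epsilon}$, I convert the estimate into a bound for $\sup_{B_p(R)}|\nabla\log u|^2=\epsilon^{-2}\sup_{B_p(R)}(P/v^2)$ of the shape $C(n)\big[\big(\tfrac1R\sup_{B_p(2R)}\log(u+1)\big)^2+a\,\tfrac1R\sup_{B_p(2R)}\log(u+1)\big]$. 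Letting $R\to\infty$ and recognizing $\limsup_{R\to\infty}\tfrac1R\sup_{B_p(R)}\log(u+1)=\Omega(u)$ then gives the asserted inequality.

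I expect the main obstacle to be twofold. The first, and conceptually decisive, difficulty is extracting the positive pointwise term $c(\epsilon)P^2/v^2$ while \emph{entirely} avoiding $\nabla f$; this is the refinement of Brighton's idea and is what forces both the restricted range of $\epsilon$ and the use of Cauchy--Schwarz in place of the trace inequality. The second is the weighted Moser iteration: the doubling constants coming from Lemma \ref{Vol_Ric_Pos} grow like $e^{cA(R)}$ with $A(R)\lesssim aR$, so these exponential factors must be carefully tracked and balanced against the exponential growth of $v$, so that the final bound emerges purely in terms of the rates $\Omega(u)$ and $a$ rather than diverging. Controlling this interplay is where the real work lies.
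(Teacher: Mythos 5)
Your pointwise computation is correct, and it is essentially the same algebra as the paper's: with $v=u^{\epsilon}$ and $P=|\nabla v|^{2}$ one has $\langle \nabla v,\nabla \Delta _{f}v\rangle =\frac{\epsilon -1}{\epsilon }\frac{\langle \nabla v,\nabla P\rangle }{v}+\frac{1-\epsilon }{\epsilon }\frac{P^{2}}{v^{2}}$, and absorbing the cross term into $|Hess(v)|^{2}$ by Cauchy--Schwarz and Young does leave a positive multiple of $P^{2}/v^{2}$ --- but only at the price of forcing $\epsilon $ to stay in a range bounded away from $0$, such as $(\tfrac{1}{2},1)$. (The paper instead discards $|h_{ij}|^{2}$ entirely, keeps the gradient cross term, and kills it at the maximum point of $G=\phi \sigma $ where $\nabla G=0$; this costs nothing and works for every $\epsilon \in (0,1)$.) The decisive gap is in your second half. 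The engine of the paper's proof is the $R$-dependent choice $\epsilon =\bigl( 2+\sup_{B_{p}(2R)}\log (u+1)\bigr) ^{-1}\rightarrow 0$, which is simultaneously (i) how $\Omega (u)$ enters the estimate, since it makes $\sup_{B_{p}(2R)}u^{2\epsilon }\leq e^{2}$, and (ii) how the unwanted weight $u^{2\epsilon }$ multiplying $|\nabla \log u|^{2}$ is removed, since $u^{2\epsilon }\rightarrow 1$ uniformly on each fixed ball as $\epsilon \rightarrow 0$. Your absorption scheme forbids exactly this: with $\epsilon $ bounded below, already on the model $\mathbb{R}\times \mathbb{S}^{n-1}$ with $u=e^{at}$ the quantities $\sup_{B_{R}}P$ and $\sup_{B_{R}}v^{2}=(\sup_{B_{R}}u)^{2\epsilon }$ each grow like $e^{c\Omega R}$ and only their ratio is of the right size; nothing in your outline cancels these exponentials.

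The Moser step also does not deliver what you need. The weighted mean value inequality for the subsolution $P$ carries the constant $c_{1}e^{c_{2}A(R)}$ from Lemma \ref{NS} with $A(R)\sim aR$, and this factor does \emph{not} disappear in the limit: unlike in the proof of Theorem \ref{Grad_Est}, where one takes $\frac{1}{R}\log $ of the conclusion so that $e^{caR}$ contributes only an additive $ca$, here you need the gradient bound itself. Likewise, converting $\sup_{B_{R}}P$ and $\sup_{B_{R}}v$ into $\sup_{B_{R}}(P/v^{2})$ requires a lower bound on $u$, i.e.\ a Harnack inequality on $B_{p}(R)$, whose constant is again $e^{caR}$. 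A smaller inaccuracy: $|\Delta _{f}\phi ^{2}|\leq C(R^{-2}+aR^{-1})$ is not available, since (\ref{v2}) bounds $\Delta _{f}r$ only from above and $\phi ^{\prime }\leq 0$, so you get a one-sided bound on $\Delta _{f}\phi ^{2}$; in the integrated form this is harmless (integrate by parts once onto $\nabla \phi $), but it is a sign the bookkeeping needs care. The repair is to abandon the integral localization: run Yau's maximum-principle localization on $G=\phi \,u^{2\epsilon -2}|\nabla u|^{2}$ keeping $\epsilon $ free, obtain $\sup_{B_{p}(R)}\bigl( u^{2\epsilon }|\nabla \log u|^{2}\bigr) \leq \bigl( c(\epsilon R)^{-2}+ca(\epsilon R)^{-1}\bigr) \sup_{B_{p}(2R)}u^{2\epsilon }$, and only then send $\epsilon \rightarrow 0$ with the choice above.
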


\begin{proof}[Proof of Proposition \protect\ref{P}]
The proof is based on Yau's argument using the Bochner technique, with a
modification similar to that in \cite{B}. Let $u>0$ be $f$-harmonic. For $%
0<\epsilon <\frac{1}{2},$ define%
\begin{equation*}
h:=\frac{1}{\epsilon }u^{\epsilon }.
\end{equation*}%
Then a direct computation gives $\Delta _{f}h=\left( \epsilon -1\right)
u^{\epsilon -2}\left\vert \nabla u\right\vert ^{2}.$

Let us denote $\sigma :=\left\vert \nabla h\right\vert ^{2}=u^{2\epsilon
-2}\left\vert \nabla u\right\vert ^{2}$. The Bochner formula asserts that 
\begin{gather*}
\frac{1}{2}\Delta _{f}\sigma =\left\vert h_{ij}\right\vert ^{2}+\left\langle
\nabla h,\nabla \left( \Delta _{f}h\right) \right\rangle +Ric_{f}\left(
\nabla h,\nabla h\right) \\
\geq \left\langle \nabla h,\nabla \left( \Delta _{f}h\right) \right\rangle
=\left( \epsilon -1\right) \left\langle \nabla h,\nabla \left( u^{\epsilon
-2}\left\vert \nabla u\right\vert ^{2}\right) \right\rangle \\
=\left( \epsilon -1\right) \left\langle \nabla h,\nabla \left( u^{-\epsilon
}\sigma \right) \right\rangle .
\end{gather*}%
Notice that 
\begin{eqnarray*}
\left\langle \nabla h,\nabla \left( u^{-\epsilon }\sigma \right)
\right\rangle &=&-\epsilon \left\langle \nabla h,\nabla u\right\rangle
u^{-\epsilon -1}\sigma +u^{-\epsilon }\left\langle \nabla h,\nabla \sigma
\right\rangle \\
&=&-\epsilon \left\vert \nabla h\right\vert ^{2}u^{-2\epsilon }\sigma
+u^{-\epsilon }\left\langle \nabla h,\nabla \sigma \right\rangle.
\end{eqnarray*}%
Consequently, 
\begin{equation}
\frac{1}{2}\Delta _{f}\sigma \geq \epsilon \left( 1-\epsilon \right)
u^{-2\epsilon }\sigma ^{2}+\left( \epsilon -1\right) u^{-\epsilon
}\left\langle \nabla h,\nabla \sigma \right\rangle .  \label{a1}
\end{equation}%
We now take a function $\phi :\left[ 0,2R\right] \rightarrow \left[ 0,1%
\right] $ with the following properties: 
\begin{eqnarray*}
\phi &=&1\ \ \text{on}\ \ \left[ 0,R\right] \\
supp\left( \phi \right) &\subseteq &[0,2R) \\
-\frac{c}{R} &\leq &\frac{\phi ^{\prime }}{\sqrt{\phi }}\leq 0 \\
\left\vert \phi ^{\prime \prime }\right\vert &\leq &\frac{c}{R^{2}},
\end{eqnarray*}%
where $c>0$ is a universal constant. We use this function to define a
cut-off on $M$ by taking $\phi \left( x\right) :=\phi \left( r\left(
x\right) \right).$ Define $G:=\phi \sigma .$ Then $G$ is non-negative on $M$
and has compact support in $B_{p}\left( 2R\right) .$ Therefore, it achieves
its maximum at some point $y\in B_{p}\left( 2R\right) .$

Without loss of generality we can assume that $y$ is not in the cut-locus of 
$p.$ So $\phi $ is smooth at $p$. Now at point $y,$ we have%
\begin{equation}
\Delta G\left( y\right) \leq 0\ \ \text{and}\ \ \nabla G\left( y\right) =0.
\label{a2}
\end{equation}%
By (\ref{v2}) together with $\left\vert f \right\vert \left( x\right) \leq
ar\left( x\right) +b,$ we get 
\begin{equation*}
\Delta _{f}r\left( x\right) \leq \frac{n-1+4b}{r}+3a.
\end{equation*}%
So there exists $r_{0}>0$ such that $\Delta _{f}r\left( x\right) \leq 4a$
for $x\in M\backslash B_{p}\left( r_{0}\right) .$ Therefore, for $R>r_{0},$
we have 
\begin{equation*}
\Delta _{f}\phi =\phi ^{\prime }\Delta _{f}r+\phi ^{\prime \prime }\geq
-c\left( \frac{a}{R}+\frac{1}{R^{2}}\right)
\end{equation*}%
and 
\begin{equation*}
\phi ^{-1}\left\vert \nabla \phi \right\vert ^{2}\leq \frac{c}{R^{2}}.
\end{equation*}%
Combining with inequality (\ref{a1}), we find that%
\begin{gather*}
\frac{1}{2}\Delta _{f}G=\frac{1}{2}\phi \Delta _{f}\sigma +\frac{1}{2}\sigma
\Delta _{f}\phi +\left\langle \nabla \phi ,\nabla \sigma \right\rangle \\
\geq \epsilon \left( 1-\epsilon \right) u^{-2\epsilon }\phi
^{-1}G^{2}+\left( \epsilon -1\right) u^{-\epsilon }\left\langle \nabla
h,\nabla \left( \phi ^{-1}G\right) \right\rangle \phi \\
-c\left( \frac{a}{R}+\frac{1}{R^{2}}\right) \phi ^{-1}G+\left\langle \nabla
\phi ,\nabla \left( \phi ^{-1}G\right) \right\rangle \\
=\epsilon \left( 1-\epsilon \right) u^{-2\epsilon }\phi ^{-1}G^{2}+\left(
\epsilon -1\right) u^{-\epsilon }\left\langle \nabla h,\nabla \phi
^{-1}\right\rangle G\phi +\left( \epsilon -1\right) u^{-\epsilon
}\left\langle \nabla h,\nabla G\right\rangle \\
-c\left( \frac{a}{R}+\frac{1}{R^{2}}\right) \phi ^{-1}G-\left\vert \nabla
\phi \right\vert ^{2}\phi ^{-2}G+\phi ^{-1}\left\langle \nabla \phi ,\nabla
G\right\rangle .
\end{gather*}%
After multiplying both sides by $\phi$ and invoking (\ref{a2}), we conclude
that at $y,$%
\begin{equation*}
0\geq \epsilon \left( 1-\epsilon \right) u^{-2\epsilon }G^{2}-\left(
1-\epsilon \right) u^{-\epsilon }\left\vert \nabla h\right\vert \left\vert
\nabla \phi \right\vert G-c\left( \frac{a}{R}+\frac{1}{R^{2}}\right) G.
\end{equation*}%
Note that 
\begin{equation*}
u^{-\epsilon }\left\vert \nabla h\right\vert \left\vert \nabla \phi
\right\vert \leq \frac{c}{R}u^{-\epsilon }\sigma ^{\frac{1}{2}}\phi ^{\frac{1%
}{2}}=\frac{c}{R}\left( u^{-2\epsilon }G\right) ^{\frac{1}{2}}.
\end{equation*}%
This shows at point $y,$ 
\begin{equation*}
\epsilon \left( u^{-2\epsilon }G\right) -\frac{c}{R}\left( u^{-2\epsilon
}G\right) ^{\frac{1}{2}}-c\left( \frac{a}{R}+\frac{1}{R^{2}}\right) \leq 0.
\end{equation*}%
Solving this as a quadratic inequality in $\left( u^{-2\epsilon }G\right) ^{%
\frac{1}{2}}$ we get 
\begin{equation*}
u^{-2\epsilon }\left( y\right) G\left( y\right) \leq \frac{c}{\left(
\epsilon R\right) ^{2}}+\frac{ca}{\epsilon R}.
\end{equation*}%
This proves that 
\begin{gather*}
\sup_{B_{p}\left( R\right) }\left( u^{2\epsilon }\left\vert \nabla \log
u\right\vert ^{2}\right) =\sup_{B_{p}\left( R\right) }\sigma \leq
\sup_{B_{p}\left( 2R\right) }G \\
\leq \left( \frac{c}{\left( \epsilon R\right) ^{2}}+\frac{ca}{\epsilon R}%
\right) \sup_{B_{p}\left( 2R\right) }\left( u^{2\epsilon }\right) .
\end{gather*}

We now observe that if $u$ is globally bounded on $M,$ then the estimate
implies $u$ is constant by letting $R\rightarrow \infty.$ That means the
gradient estimate claimed in the Proposition is automatically true. For
unbounded $u,$ we let 
\begin{equation*}
\epsilon :=\left( 2+\sup_{B_{p}\left( 2R\right) }\log \left( u+1\right)
\right) ^{-1}>0.
\end{equation*}%
Then, 
\begin{equation*}
\sup_{B_{p}\left( 2R\right) }\left( u^{2\epsilon }\right) \leq e^{2}.
\end{equation*}%
So we obtain, for any $R>0$ and $r<R,$ that 
\begin{equation*}
\sup_{B_{p}\left( r\right) }\left( u^{2\epsilon }\left\vert \nabla \log
u\right\vert ^{2}\right) \leq \left( \frac{c}{R}\sup_{B_{p}\left( 2R\right)
}\log \left( u+1\right) \right) ^{2}+\frac{ca}{R}\sup_{B_{p}\left( 2R\right)
}\log \left( u+1\right) +\frac{c\left( a+1\right) }{R}.
\end{equation*}

Since $u$ is unbounded, it is clear that $\epsilon \rightarrow 0$ as $%
R\rightarrow \infty .$ Therefore, after letting $R\rightarrow \infty $ with $%
r$ fixed, we arrive at 
\begin{equation*}
\sup_{B_{p}\left( r\right) }\left\vert \nabla \log u\right\vert ^{2}\leq
C\left( n\right) \left( \Omega \left( u\right) ^{2}+a\Omega \left( u\right)
\right) .
\end{equation*}%
Since $r$ is arbitrary, this proves the Proposition.
\end{proof}

Let us point out that it is possible to prove another version of Proposition %
\ref{P} without any growth assumption on $f.$ It takes the form of $%
\sup_{M}\left\vert \nabla \log u\right\vert ^{2}\leq C\left( \Omega \left(
u\right) ^{2}+\Omega \left( u\right) \right) $ for some constant $C$
depending on the Ricci curvature lower bound and $\sup \left\vert \nabla
f\right\vert $ on $B_{p}\left( 1\right).$ This is because we can use a
different Laplacian comparison theorem from \cite{W}, which does not require 
$f$ to be bounded. It in particular says that a positive $f$-harmonic
function of sub-exponential growth on a complete manifold with $Ric_{f}\geq
0 $ must be a constant.

In the next step, we will establish an upper bound estimate for $\Omega
\left( u\right) $ defined in Proposition \ref{P} by using Moser iteration
argument. First, we will establish a local Sobolev inequality on $M$,
following the arguments in \cite{Bu, HK, G, SC}. Since it will be crucial to
have explicit and accurate dependency of the constants appearing in the
inequality in terms of the growth of $f,$ we provide details of the proof
here.

We now use Lemma \ref{Vol_Ric_Pos} to prove a Neumann Poincar\'{e}
inequality. For this, we follow Buser's proof, see \cite{Bu} (also cf. \cite%
{Ch}, p. 354). There is an alternate proof, see \cite{SC1}, p. 176, which
first establishes a weaker version of Neumann Poincar\'{e} and then uses a
covering argument to prove the strong version. For the proof of Theorem \ref%
{Grad_Est}, the weaker version of Neumann Poincar\'{e} inequality is in fact
sufficient. Recall 
\begin{equation}
A\left( R\right) :=\sup_{x\in B_{p}\left( 3R\right) }\left\vert f\right\vert
\left( x\right) .  \label{a5}
\end{equation}%
In the following, we will suppress $R$ in $A(R)$ and simply call it $A. $

\begin{lemma}
\label{NP} Let $\left( M,g,e^{-f}dv\right) $ be a smooth metric measure
space with $Ric_{f}\geq 0.$ Then for any $x\in B_{p}\left( R\right) $ we
have 
\begin{equation*}
\int_{B_{x}\left( r\right) }\left\vert \varphi -\varphi _{B_{x}\left(
r\right) }\right\vert ^{2}\leq c_{1}e^{c_{2}A}\cdot r^{2}\int_{B_{x}\left(
r\right) }\left\vert \nabla \varphi \right\vert ^{2}
\end{equation*}%
for all $0<r<R$ and $\varphi \in C^{\infty }\left( B_{x}\left( r\right)
\right) ,$ where $\varphi _{B_{x}\left( r\right) }:=V^{-1}\left( B_{x}\left(
r\right) \right) \int_{B_{x}\left( r\right) }\varphi .$ The constants $c_{1}$
and $c_{2}$ depend only on the dimension $n.$
\end{lemma}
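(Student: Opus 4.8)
The plan is to deduce the inequality from a pointwise estimate along geodesics, feeding the volume/Jacobian comparison of Lemma~\ref{Vol_Ric_Pos} into a change of variables. Write $B=B_x(r)$ and recall the elementary variance identity
\[
\int_B|\varphi-\varphi_B|^2 = \frac{1}{2V(B)}\int_B\int_B|\varphi(y)-\varphi(z)|^2\,dy\,dz,
\]
which replaces the mean oscillation by a double integral over pairs of points. For each pair $(y,z)$ I would join them by a minimizing geodesic $\gamma_{yz}$ and estimate $|\varphi(y)-\varphi(z)|\le\int_0^{d(y,z)}|\nabla\varphi|(\gamma_{yz}(s))\,ds$; since $d(y,z)\le 2r$, the Cauchy--Schwarz inequality gives $|\varphi(y)-\varphi(z)|^2\le 2r\int_0^{d(y,z)}|\nabla\varphi|^2(\gamma_{yz}(s))\,ds$. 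The whole problem thus reduces to bounding the triple integral $I:=\int_B\int_B\int_0^{d(y,z)}|\nabla\varphi|^2(\gamma_{yz}(s))\,ds\,dy\,dz$ by a multiple of $r\,V(B)\int_{B_x(2r)}|\nabla\varphi|^2$.

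To estimate $I$ I would fix $y$ and pass to geodesic polar coordinates $z=\exp_y(\rho\omega)$ centered at $y$, so that $dz=J(y,\rho,\omega)\,d\rho\,d\omega$ and $\gamma_{yz}(s)=\exp_y(s\omega)$; a Fubini interchange of the $s$- and $\rho$-integrations then turns the running point $\gamma_{yz}(s)$ into a genuine integration variable $w=\exp_y(s\omega)$. This is exactly the place where Lemma~\ref{Vol_Ric_Pos} enters: the Jacobian comparison lets me dominate the density $J(y,\rho,\omega)$ at the far endpoint by the density $J(y,s,\omega)$ at $w$, up to the multiplicative factor $e^{6A}$ coming from that lemma, and this is the sole source of the exponential constant $e^{c_2A}$. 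Converting back to the volume measure in $w$ and integrating in $y$, one is left with a weighted integral of $|\nabla\varphi|^2$ over $B_x(2r)$; carrying out the remaining $y$-integration and invoking the volume doubling of Lemma~\ref{Vol_Ric_Pos} to compare $V(B_x(2r))$ with $V(B)$ yields a bound of the form $I\le c_1'e^{c_2A}\,r\,V(B)\int_{B_x(2r)}|\nabla\varphi|^2$, hence the Neumann--Poincar\'e inequality with $B_x(2r)$ on the right.

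Finally, to obtain the stated inequality with the same ball $B_x(r)$ on both sides I would follow Buser: the weak inequality just derived, together with the doubling property, can be upgraded by a Whitney-type covering and chaining argument, covering $B_x(r)$ by controllably many smaller balls on which the comparison geodesics remain inside and summing the local oscillations along a chain. The main obstacle I anticipate is precisely the bookkeeping in the change of variables above: the naive pointwise majorant $J(y,s,\omega)\le e^{6A}s^{n-1}$ is too wasteful in the collapsed regime and would introduce a geometry-dependent factor $r^n/V(B)$ that is \emph{not} controlled by $e^{cA}$ alone, so the $V(B)$ factor must be extracted honestly from the doubling form of Lemma~\ref{Vol_Ric_Pos} rather than from a Euclidean comparison. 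As the remark following the statement indicates, for the proof of Theorem~\ref{Grad_Est} only the weaker inequality with $B_x(2r)$ on the right is actually needed, so the covering step of this last paragraph may in fact be omitted.
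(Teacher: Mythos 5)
Your route is genuinely different from the paper's. The paper follows Buser \cite{Bu}: it first proves a lower bound for the Cheeger isoperimetric constant of $B_x(r)$ --- for any hypersurface $\Gamma$ splitting the ball into $D_1,D_2$ one has $\min\{V(D_1),V(D_2)\}\leq c_1e^{c_2A}\,r\,A(\Gamma)$ --- via a rod/projection construction fed by the Jacobian comparison of Lemma \ref{Vol_Ric_Pos}, and then invokes Cheeger's theorem to pass to the Poincar\'e inequality. You propose instead the variance-identity/segment-inequality route, which is exactly the alternative the authors themselves mention and attribute to \cite{SC1}: prove a weak Poincar\'e inequality (larger ball on the right) by integrating $|\nabla\varphi|$ along geodesics joining pairs of points, then upgrade by chaining --- or, as the paper remarks, skip the upgrade because the weak form suffices for Theorem \ref{Grad_Est}. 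Both routes consume the same input, namely the almost-monotonicity of $J(x,\cdot,\xi)/t^{n-1}$ up to $e^{cA}$, and both give constants of the form $c_1e^{c_2A}$; Buser's argument avoids the weak-to-strong step at the cost of the isoperimetric machinery.

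There is, however, a genuine gap in your central step, which you half-detect but misdiagnose. Lemma \ref{Vol_Ric_Pos} gives $J(y,\rho,\omega)\leq e^{6A}(\rho/s)^{n-1}J(y,s,\omega)$ for $s<\rho$; it does \emph{not} dominate the density at the far endpoint by the density at the running point $w=\exp_y(s\omega)$ ``up to the multiplicative factor $e^{6A}$,'' since the factor $(\rho/s)^{n-1}$ blows up as $s\to 0$. Absorbing that factor crudely is precisely what produces the uncontrolled $r^n/V(B)$ you worry about, and the cure is not to ``extract $V(B)$ from doubling'' but the standard segment-inequality device: split the $s$-integral at $s=d(y,z)/2$. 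On the far half, $s\geq\rho/2$, the offending factor is at most $2^{n-1}$, and Fubini in $\rho$ (for fixed $s$, the variable $\rho$ ranges over an interval of length at most $2r$) bounds that contribution by $c\,e^{6A}\,r\,V(B)\int_{B_x(2r)}|\nabla\varphi|^2$; the near half is handled by the symmetry of the double integral in $(y,z)$, i.e.\ by running the same estimate from the other endpoint of the geodesic. With this correction your argument closes and yields the weak inequality with $B_x(2r)$ on the right, after which the chaining step --- or the paper's observation that the weak form is all that is needed --- finishes the job.
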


\begin{proof}[Proof of Lemma \protect\ref{NP}]
First, we show that Lemma \ref{Vol_Ric_Pos} and the argument in \cite{Bu}
imply a lower bound on the isoperimetric constant. Let $\Gamma $ be a smooth
hypersurface in $B_{x}\left( r\right) $ with $\bar{\Gamma}$ imbedded in $%
\overline{B_{x}\left( r\right) }.$ Let $D_{1}$ and $D_{2}$ be disjoint open
subsets in $B_{x}\left( r\right) $ such that $D_{1}\cup D_{2}=B_{x}\left(
r\right) \backslash \Gamma .$ We will show that 
\begin{equation}
\min \left\{ V\left( D_{1}\right) ,V\left( D_{2}\right) \right\} \leq
rc_{1}e^{c_{2}A}A\left( \Gamma \right) .  \label{a3}
\end{equation}%
Then, by Cheeger's theorem, the inequality (\ref{a3}) gives the claimed
Neumann Poincar\'{e} inequality.

We fix $D_{1}$ so that $V\left( D_{1}\cap B_{x}\left( \frac{r}{2}\right)
\right) \leq \frac{1}{2}V\left( B_{x}\left( \frac{r}{2}\right) \right) .$
For a fixed $\alpha \in \left( 0,1\right) $ to be chosen later, consider
first the case when $V\left( D_{1}\cap B_{x}\left( \frac{r}{2}\right)
\right) \leq \alpha V\left( D_{1}\right) .$ We denote by $C\left( x\right) $
the cut locus of $x.$ For $y\in D_{1}\backslash C\left( x\right),$ let $%
y^{\ast }$ be the first intersection point of $yx$, the unique minimizing
geodesic from $y$ to $x,$ with $\Gamma .$ In the case $yx$ does not
intersect $\Gamma ,$ we set $y^{\ast }=x.$

Define 
\begin{eqnarray*}
\mathcal{A}_{1} &:&=\left\{ y\in D_{1}\backslash \left( C\left( x\right)
\cup B_{x}\left( \frac{r}{2}\right) \right) :\ \ y^{\ast }\notin B_{x}\left( 
\frac{r}{4}\right) \right\} \\
\mathcal{A}_{2} &:&=\left\{ y\in D_{1}\backslash \left( C\left( x\right)
\cup B_{x}\left( \frac{r}{2}\right) \right) :\ \ y^{\ast }\in B_{x}\left( 
\frac{r}{4}\right) \right\} \\
\mathcal{A}_{3} &:&=\left( B_{x}\left( \frac{r}{2}\right) \backslash
B_{x}\left( \frac{r}{4}\right) \right) \cap \left( \cup _{y\in \mathcal{A}%
_{2}}rod(y)\right) ,
\end{eqnarray*}%
where $rod(y):=\left\{ \exp _{x}\left( \tau \xi \right) :\ \ \frac{r}{4}%
<\tau <s\right\} $ for $y=\exp _{x}\left( s\xi \right) $ with $\xi \in
S_{x}M $ and $0<s<r .$

From now on, we will use $c_{1}$ and $c_{2}$ to denote constants depending
only on dimension $n.$ By Lemma \ref{Vol_Ric_Pos} we have $\frac{V\left( 
\mathcal{A}_{2}\right) }{V\left( \mathcal{A}_{3}\right) }\leq
c_{1}e^{c_{2}A}.$ Note that $V\left( \mathcal{A}_{1}\right) +V\left( 
\mathcal{A}_{2}\right) =V\left( D_{1}\backslash B_{x}\left( \frac{r}{2}%
\right) \right) \geq \left( 1-\alpha \right) V\left( D_{1}\right) $ and $%
V\left( \mathcal{A}_{3}\right) \leq V\left( D_{1}\cap B_{x}\left( \frac{r}{2}%
\right) \right) \leq \alpha V\left( D_{1}\right).$ Thus, 
\begin{eqnarray*}
\left( 1-\alpha \right) V\left( D_{1}\right) &\leq &V\left( \mathcal{A}%
_{1}\right) +V\left( \mathcal{A}_{2}\right) \leq V\left( \mathcal{A}%
_{1}\right) +c_{1}e^{c_{2}A}V\left( \mathcal{A}_{3}\right) \\
&\leq &V\left( \mathcal{A}_{1}\right) +\alpha c_{1}e^{c_{2}A}V\left(
D_{1}\right),
\end{eqnarray*}%
or%
\begin{equation}
V\left( \mathcal{A}_{1}\right) \geq \left( 1-\alpha c_{1}e^{c_{2}A}\right)
V\left( D_{1}\right) .  \label{a4}
\end{equation}%
By setting 
\begin{equation}
\alpha :=\frac{1}{2}\left( c_{1}\right) ^{-1}e^{-c_{2}A} ,  \label{a6}
\end{equation}%
we conclude $V\left( \mathcal{A}_{1}\right) \geq \frac{1}{2}V\left(
D_{1}\right) .$ Moreover, Lemma \ref{Vol_Ric_Pos} implies that 
\begin{equation*}
\frac{A\left( \Gamma \right) }{V\left( \mathcal{A}_{1}\right) }\geq \left(
c_{3}\right) ^{-1}e^{-c_{4}A}r^{-1},
\end{equation*}%
where constants $c_{3}$ and $c_{4}$ depending only on $n.$

This, together with (\ref{a4}), proves the Lemma in the case when $V\left(
D_{1}\cap B_{x}\left( \frac{r}{2}\right) \right) \leq \alpha V\left(
D_{1}\right) ,$ where $\alpha $ is given by (\ref{a6}).

Now we assume that $V\left( D_{1}\cap B_{x}\left( \frac{r}{2}\right) \right)
\geq \alpha V\left( D_{1}\right) $ and finish the proof of the Lemma. We
need the following general fact, see \cite{Bu}. Set $W_{0}:=D_{1}\cap
B_{x}\left( \frac{r}{2}\right) $ and $W_{1}:=D_{2}\cap B_{x}\left( \frac{r}{2%
}\right) $ or vice versa $W_{0}:=D_{2}\cap B_{x}\left( \frac{r}{2}\right) $
and $W_{1}:=D_{1}\cap B_{x}\left( \frac{r}{2}\right) .$ Then, for at least
one of the two choices of $\left\{ W_{0},W_{1}\right\},$ there exist a point 
$w_{0}\in W_{0}$ and a measurable set $\mathcal{W}_{1}\subset W_{1}$ so that 
$V\left( \mathcal{W}_{1}\right) \geq \frac{1}{2}V\left( W_{1}\right).$
Moreover, for each $y\in \mathcal{W}_{1},$ the minimizing geodesic $yw_{0}$
from $y$ to $w_{0}$ intersects $\Gamma $ and the first intersection point $%
y^{\ast }$ satisfies $d\left( y,y^{\ast }\right) \leq d\left(
y^{\ast},w_{0}\right) .$

Observe that $\alpha V\left( D_{1}\right) \leq V\left( D_{1}\cap B_{x}\left( 
\frac{r}{2}\right) \right) \leq \frac{1}{2}V\left( B_{x}\left( \frac{r}{2}%
\right) \right) .$ Therefore, $\alpha V\left( D_{1}\right) \leq V\left(
D_{2}\cap B_{x}\left( \frac{r}{2}\right) \right) ,$ too. So, regardless of
how $\mathcal{W}_{1}$ is picked, we have $\alpha V\left( D_{1}\right) \leq
2V\left( \mathcal{W}_{1}\right) .$ Now, to establish the Lemma, we need only
to show $\frac{A\left( \Gamma \right) }{V\left( \mathcal{W}_{1}\right) }\geq
\left( c_{3}\right) ^{-1}e^{-c_{4}A}r^{-1}$ for some $c_{3}$ and $c_{4} $
depending only on $n.$ For this, we use polar coordinates at $w_{0}.$ For $%
y\in \mathcal{W}_{1}$, write $y=\exp _{w_{0}}\left( t_{0}\xi \right).$ Let $%
y^{\ast }$ be the first intersecting point of $yw_{0}$ with $\Gamma .$
Define $t_1$ by $y^{\ast }=\exp _{w_{0}}\left( t_{1}\xi \right) .$ From the
choice of $w_{0}$ and $\mathcal{W}_{1},$ we know $t_{1}\geq \frac{1}{2}%
t_{0}. $ Let $t_{2}$ be the maximal $t$ so that $\exp _{w_{0}}\left( t\xi
\right) \in \mathcal{W}_{1}\backslash C\left( w_{0}\right) .$ Clearly, $%
t_{2}\leq 2t_{1}.$ From Lemma \ref{Vol_Ric_Pos} we conclude that 
\begin{eqnarray*}
\int_{t_{1}}^{t_{2}}J\left( w_{0},t,\xi \right) dt &\leq &\left(
t_{2}-t_{1}\right) J\left( w_{0},t_{1},\xi \right) c_{1}e^{c_{2}A} \\
&\leq &rJ\left( w_{0},t_{1},\xi \right) c_{1}e^{c_{2}A}.
\end{eqnarray*}%
The desired result then follows after integrating in $\xi $. This proves the
Lemma.
\end{proof}

Combining Lemma \ref{Vol_Ric_Pos}, Lemma \ref{NP} and the argument in \cite%
{HK}, we get a local Neumann Sobolev inequality of the following form.

\begin{lemma}
\label{NS} Let $\left( M,g,e^{-f}dv\right) $ be a smooth metric measure
space with $Ric_{f}\geq 0.$ Then there exist constants $\nu >2,$ $c_{1}$ and 
$c_{2},$ all depending only on $n$ such that 
\begin{equation*}
\left( \int_{B_{p}\left( R\right) }\left\vert \varphi -\varphi _{B_{p}\left(
R\right) }\right\vert ^{\frac{2\nu }{\nu -2}}\right) ^{\frac{\nu -2}{\nu }%
}\leq c_{1}e^{c_{2}A}\frac{R^{2}}{V\left( B_{p}\left( R\right) \right) ^{%
\frac{2}{\nu }}}\int_{B_{p}\left( R\right) }\left\vert \nabla \varphi
\right\vert ^{2}\ 
\end{equation*}%
for $\ \varphi \in C^{\infty }\left( B_{p}\left( R\right) \right) ,$ where $%
\varphi _{B_{p}\left( R\right) }:=V^{-1}\left( B_{p}\left( R\right) \right)
\int_{B_{p}\left( R\right) }\varphi .$
\end{lemma}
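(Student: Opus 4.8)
The plan is to derive the Neumann–Sobolev inequality from the Neumann–Poincaré inequality of Lemma \ref{NP} together with the volume doubling property of Lemma \ref{Vol_Ric_Pos}, following the abstract machinery of \cite{HK}. The key structural fact is that once a space satisfies a scale-invariant $L^2$ Poincaré inequality on all balls \emph{and} a volume doubling condition, then a Sobolev inequality with a definite exponent $\nu>2$ follows automatically, with the exponent determined only by the doubling constant. Since both input estimates have been established with constants of the form $c_1 e^{c_2 A}$ depending only on $n$ (with $A=A(R)$), the whole argument goes through with the $f$-dependence entering only through the factor $e^{c_2 A}$, which is exactly the accuracy we need.

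\textbf{Key steps.} First I would record the volume doubling property in the precise form needed: by Lemma \ref{Vol_Ric_Pos}, for any $x \in B_p(R)$ and $0<r<R$ one has $V(B_x(2r)) \le c\,e^{c_2 A} V(B_x(r))$, so the doubling constant is controlled by $e^{c_2 A}$, and correspondingly one gets a \emph{reverse doubling}/lower volume bound $V(B_x(r))/V(B_x(R)) \ge c^{-1}e^{-c_2 A}(r/R)^{\nu}$ for a suitable $\nu>2$. This $\nu$ is the Sobolev exponent. Second, I would feed the $L^2$ Poincaré inequality of Lemma \ref{NP}, valid on every ball $B_x(r) \subset B_p(R)$, into the Hajłasz–Koskela chaining scheme \cite{HK}: one covers $B_p(R)$ by a Whitney-type family of balls, telescopes the Poincaré estimate along chains of such balls to control $|\varphi(y)-\varphi_{B_p(R)}|$ by a fractional maximal function of $|\nabla\varphi|$, and then invokes the doubling property to bound this maximal operator on $L^{2\nu/(\nu-2)}$. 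Tracking the constants through this chaining, each step contributes a factor that is polynomial in the doubling constant, hence bounded by a power of $e^{c_2 A}$; absorbing these into a single exponential yields the stated $c_1 e^{c_2 A}$.

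\textbf{Main obstacle.} The genuinely delicate point is the bookkeeping of the $A$-dependence through the Hajłasz–Koskela iteration. In the standard nonnegative-Ricci setting the doubling constant is a dimensional constant, so the Sobolev exponent $\nu$ and all prefactors are absolute; here the doubling constant $e^{c_2 A}$ is large and $A$-dependent, so one must verify that (i) the exponent $\nu$ can still be chosen as a dimensional constant independent of $A$ — this is legitimate because the doubling exponent $n$ in Lemma \ref{Vol_Ric_Pos} is itself $A$-free, only the multiplicative prefactor carries $e^{c_2 A}$ — and (ii) the number of telescoping steps and overlaps in the covering remain controlled by $n$ alone, so that the accumulated constant is $e^{c_2 A}$ rather than, say, $e^{c_2 A \log R}$. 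I would isolate the prefactor at each stage of the chaining argument and check that only finitely many (dimensionally bounded) products of the doubling constant occur, which is the essential reason the final bound retains the clean form $c_1 e^{c_2 A}$.

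The scale factor $R^2/V(B_p(R))^{2/\nu}$ in the conclusion is exactly the homogeneity dictated by matching the scaling of $\|\nabla\varphi\|_2^2$ against $\|\varphi-\varphi_{B_p(R)}\|_{2\nu/(\nu-2)}^2$ on a ball of radius $R$, so once the dimensionless inequality is in place this normalization is forced and requires no further argument.
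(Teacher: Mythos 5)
Your proposal is correct and follows essentially the same route as the paper: the authors also derive Lemma \ref{NS} by telescoping the Poincar\'e inequality of Lemma \ref{NP} along chains of balls of geometrically decreasing radii (the Haj\l asz--Koskela scheme), using the doubling bound of Lemma \ref{Vol_Ric_Pos} at each step, and then passing from the resulting pointwise/weak-type estimate to the strong $L^{2\nu/(\nu-2)}$ bound via a Vitali covering and truncation of the distribution function, which is the covering-argument form of your maximal-function step. Your key observation --- that the doubling exponent is $A$-free while only the prefactor carries $e^{c_2A}$, and that each chain contributes the doubling constant additively rather than multiplicatively, so the final constant remains a single power of $e^{c_2A}$ with $\nu$ dimensional (the paper gets $\nu=4n-2$) --- is exactly what makes the paper's bookkeeping work.
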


\begin{proof}[Proof of Lemma \protect\ref{NS}]
For $y\in B_{p}\left( R\right),$ let $\gamma \left( t\right) $ be a
minimizing geodesic from $p$ to $y$ such that $\gamma \left( 0\right) =p$
and $\gamma \left( L\right) =y.$ Define $y_{0}:=p$ and $y_{i}:=\gamma \left(
\sum_{j=1}^{i}\frac{R}{2^{j}}\right) $ for $1\leq i\leq i_{0},$ where $i_{0}$
is the largest integer $i$ so that $\sum_{j=1}^{i}\frac{R}{2^{j}}<L.$ Define
also $B_{i}:=B_{y_{i}}\left( \frac{R}{2^{i+1}}\right) $ for $i<i_{0}$ and $%
B_{i}:=B_{y}\left( \frac{R}{2^{i+1}}\right) $ for $i\geq i_{0}.$

Let $\varphi _{B_{i}}:=V^{-1}\left( B_{i}\right) \int_{B_{i}}\varphi .$ Then 
$\lim_{i\rightarrow \infty }\varphi _{B_{i}}=\varphi \left( y\right) .$
Thus, we have 
\begin{equation*}
\left\vert \varphi _{B_{0}}-\varphi \left( y\right) \right\vert \leq
\sum_{i\geq 0}\left\vert \varphi _{B_{i}}-\varphi _{B_{i+1}}\right\vert \leq
\sum_{i\geq 0}\left( \left\vert \varphi _{B_{i}}-\varphi _{D_{i}}\right\vert
+\left\vert \varphi _{D_{i}}-\varphi _{B_{i+1}}\right\vert \right) .
\end{equation*}%
Here, $D_{i}:=B_{z_{i}}\left( \frac{R}{2^{i+3}}\right) \subset B_{i}\cap
B_{i+1},$ and $z_{i}:=\gamma \left( \sum_{j=1}^{i}\frac{R}{2^{j}}+\frac{3R}{%
2^{i+3}}\right) .$

By Lemma \ref{Vol_Ric_Pos} it is easy to see that 
\begin{equation*}
\left\vert \varphi _{B_{i}}-\varphi _{D_{i}}\right\vert \leq V\left(
D_{i}\right) ^{-1}\int_{B_{i}}\left\vert \varphi -\varphi
_{B_{i}}\right\vert \leq c_{1}e^{c_{2}A}V\left( B_{i}\right)
^{-1}\int_{B_{i}}\left\vert \varphi -\varphi _{B_{i}}\right\vert .
\end{equation*}%
Note that a similar bound for $\left\vert \varphi _{D_{i}}-\varphi
_{B_{i+1}}\right\vert $ also holds. So we conclude that 
\begin{eqnarray*}
\left\vert \varphi _{B_{0}}-\varphi \left( y\right) \right\vert &\leq
&c_{1}e^{c_{2}A}\sum_{i\geq 0}V\left( B_{i}\right)
^{-1}\int_{B_{i}}\left\vert \varphi -\varphi _{B_{i}}\right\vert \\
&\leq &c_{1}e^{c_{2}A}\sum_{i\geq 0}\left( V\left( B_{i}\right)
^{-1}\int_{B_{i}}\left\vert \varphi -\varphi _{B_{i}}\right\vert ^{2}\right)
^{\frac{1}{2}} \\
&\leq &c_{1}e^{c_{2}A}\sum_{i\geq 0}\frac{R}{2^{i+1}}\left( V\left(
B_{i}\right) ^{-1}\int_{B_{i}}\left\vert \nabla \varphi \right\vert
^{2}\right) ^{\frac{1}{2}},
\end{eqnarray*}%
where in the second line we have use the Cauchy-Schwarz inequality and in
the last line we have use Lemma \ref{NP}. On the other hand, 
\begin{equation*}
\left\vert \varphi _{B_{0}}-\varphi \left( y\right) \right\vert =cR^{-\frac{1%
}{2}}\sum_{i\geq 0}\left( \frac{R}{2^{i+1}}\right) ^{\frac{1}{2}}\left\vert
\varphi _{B_{0}}-\varphi \left( y\right) \right\vert ,
\end{equation*}%
where $c$ is a universal constant. So for $R_{i}:=\frac{R}{2^{i+1}},$ we
have 
\begin{equation*}
\sum_{i\geq 0}\left( R_{i}\right) ^{\frac{1}{2}}\left\vert \varphi
_{B_{0}}-\varphi \left( y\right) \right\vert \leq c_{1}e^{c_{2}A}R^{\frac{1}{%
2}}\sum_{i\geq 0}R_{i}\left( V\left( B_{i}\right)
^{-1}\int_{B_{i}}\left\vert \nabla \varphi \right\vert ^{2}\right) ^{\frac{1%
}{2}}.
\end{equation*}%
Hence, there exists an $i$ (depending on $y$) so that%
\begin{equation*}
\left\vert \varphi _{B_{0}}-\varphi \left( y\right) \right\vert ^{2}\leq
c_{1}e^{c_{2}A}\left( RR_{i}\right) \frac{1}{V\left( B_{i}\right) }%
\int_{B_{i}}\left\vert \nabla \varphi \right\vert ^{2}.
\end{equation*}%
Since $B_{i}\subset B_{y}\left( 3R_{i}\right),$ it follows that for each $%
y\in B_{p}\left( R\right) $ there exists $r_{y}>0$ so that%
\begin{equation}
\left\vert \varphi _{B_{0}}-\varphi \left( y\right) \right\vert ^{2}\leq
c_{1}e^{c_{2}A}\left( Rr_{y}\right) V\left( B_{y}\left( r_{y}\right) \right)
^{-1}\int_{B_{y}\left( r_{y}\right) \cap B_{p}\left( R\right) }\left\vert
\nabla \varphi \right\vert ^{2}.  \label{a7}
\end{equation}%
According to Lemma \ref{Vol_Ric_Pos}, 
\begin{equation}
\frac{V\left( B_{p}\left( R\right) \right) }{V\left( B_{y}\left(
r_{y}\right) \right) }\leq \frac{V\left( B_{y}\left( 2R\right) \right) }{%
V\left( B_{y}\left( r_{y}\right) \right) }\leq c_{1}e^{c_{2}A}\left( \frac{R%
}{r_{y}}\right) ^{n},  \label{a8}
\end{equation}%
Solving $r_{y}$ from (\ref{a8}) and plugging into (\ref{a7}) then gives 
\begin{equation}
\left\vert \varphi _{B_{0}}-\varphi \left( y\right) \right\vert ^{2}\leq
c_{1}e^{c_{2}A}R^{2}V\left( B_{p}\left( R\right) \right) ^{-\frac{1}{n}%
}V\left( B_{y}\left( r_{y}\right) \right) ^{\frac{1}{n}-1}\int_{B_{y}\left(
r_{y}\right) \cap B_{p}\left( R\right) }\left\vert \nabla \varphi
\right\vert ^{2}.  \label{a9}
\end{equation}%
We now define $A_{t}:=\left\{ y\in B_{p}\left( R\right) :\ \left\vert
\varphi _{B_{0}}-\varphi \left( y\right) \right\vert \geq t\right\}.$
Applying the Vitali covering Lemma, we find a countable disjoint collection $%
\left\{ B_{i}\left( r_{i}\right) \right\} _{i\in I}$ of balls from $\left\{
B_{y}\left( r_{y}\right) :\ y\in A_{t}\right\} $ such that for any $y\in
A_{t},$ there exists $i\in I$ such that $B_{i}\left( r_{i}\right) \cap
B_{y}\left( r_{y}\right) \neq \phi $ and $B_{y}\left( r_{y}\right) \subset
B_{i}\left( 3r_{i}\right) .$ Then it follows, by Lemma \ref{Vol_Ric_Pos} and
(\ref{a9}), that 
\begin{eqnarray*}
V\left( A_{t}\right) ^{1-\frac{1}{n}} &\leq &c_{1}e^{c_{2}A}\sum_{i\in
I}V\left( B_{i}\right) ^{1-\frac{1}{n}} \\
&\leq &c_{1}e^{c_{2}A}\frac{R^{2}}{t^{2}}V\left( B_{p}\left( R\right)
\right) ^{-\frac{1}{n}}\sum_{i\in I}\int_{B_{i}\left( r_{i}\right) \cap
B_{p}\left( R\right) }\left\vert \nabla \varphi \right\vert ^{2} \\
&=&c_{1}e^{c_{2}A}\frac{R^{2}}{t^{2}}V\left( B_{p}\left( R\right) \right) ^{-%
\frac{1}{n}}\int_{B_{p}\left( R\right) }\left\vert \nabla \varphi
\right\vert ^{2}.
\end{eqnarray*}%
This may be rewritten as 
\begin{equation}
V\left( A_{t}\right) \leq t^{-\frac{2n}{n-1}}B  \label{a10}
\end{equation}%
with $B:=c_{1}e^{c_{2}A}R^{\frac{2n}{n-1}}V\left( B_{p}\left( R\right)
\right) ^{-\frac{1}{n-1}}\left( \int_{B_{p}\left( R\right) }\left\vert
\nabla \varphi \right\vert ^{2}\right) ^{\frac{n}{n-1}}.$ Now, for any $%
\frac{2n}{n-1}>q>2,$ we have 
\begin{eqnarray*}
\int_{B_{p}\left( R\right) }\left\vert \varphi -\varphi _{B_{0}}\right\vert
^{q} &=&q\int_{0}^{\infty }t^{q-1}V\left( A_{t}\right) dt \\
&=&q\int_{0}^{T}t^{q-1}V\left( A_{t}\right) dt+q\int_{T}^{\infty
}t^{q-1}V\left( A_{t}\right) dt \\
&\leq &T^{q}V\left( B_{p}\left( R\right) \right) +\frac{q}{\frac{2n}{n-1}-q}%
T^{q-\frac{2n}{n-1}}B,
\end{eqnarray*}%
where we have used (\ref{a10}) to bound the second integral in the second
line. Choosing $q=\frac{2n-1}{n-1}$ and $T:=\left( R^{2}\frac{1}{V\left(
B_{p}\left( R\right) \right) }\int_{B_{p}\left( R\right) }\left\vert \nabla
\varphi \right\vert ^{2}\right) ^{\frac{1}{2}},$ we get 
\begin{equation*}
\left( \int_{B_{p}\left( R\right) }\left\vert \varphi -\varphi
_{B_{0}}\right\vert ^{\frac{2\nu }{\nu -2}}\right) ^{\frac{\nu -2}{\nu }%
}\leq c_{1}e^{c_{2}A}\frac{R^{2}}{V\left( B_{p}\left( R\right) \right) ^{%
\frac{2}{\nu }}}\int_{B_{p}\left( R\right) }\left\vert \nabla \varphi
\right\vert ^{2},
\end{equation*}%
where $\nu :=\frac{2q}{q-2}=4n-2.$ This proves the Theorem.
\end{proof}

We are now ready to prove Theorem \ref{Grad_Est}.

\begin{proof}[Proof of Theorem \protect\ref{Grad_Est}]
Let $u$ be a positive solution to $\Delta _{f} u=0.$ Applying the Moser
iteration scheme to the equation $\Delta _{f}u=0$ with the help of Lemma \ref%
{Vol_Ric_Pos} and Lemma \ref{NS}, we obtain 
\begin{equation}
\sup_{B_{p}\left( \frac{1}{2}R\right) }u\leq \frac{c_{1}e^{c_{2}A}}{V\left(
B_{p}\left( R\right) \right) }\int_{B_{p}\left( R\right) }u,  \label{a11}
\end{equation}%
where $c_{1}$ and $c_{2}$ are constants depending only on $n$. For this,
notice that the Neumann Sobolev inequality, Lemma \ref{NS}, also holds true
when integrals are with respect to the measure $e^{-f}dv,$ without changing
the nature of the dependency of the constants on $A.$

Now we start using the assumption on $f$ that $\left\vert f\right\vert
\left( x\right) \leq ar\left( x\right) +b$ on $M.$ Applying (\ref{v2}) we
obtain, for any $r>0,$ that $\Delta _{f}r\left( x\right) \leq \frac{n-1+4b}{r%
}+3a.$ Therefore, we can find $r_{0}>0$ so that 
\begin{equation}
\Delta _{f}r\left( x\right) \leq 4a\ \ \ \text{for any }x\in M\backslash
B_{p}\left( r_{0}\right) .  \label{a12}
\end{equation}%
So for $r>r_{0},$ 
\begin{gather*}
4a\int_{B_{p}\left( r\right) \backslash B_{p}\left( r_{0}\right)
}ue^{-f}\geq \int_{B_{p}\left( r\right) \backslash B_{p}\left( r_{0}\right)
}u\left( \Delta _{f}r\right) e^{-f} \\
=r_{0}\int_{\partial B_{p}\left( r_{0}\right) }\left\langle \nabla u,\nabla
r\right\rangle e^{-f}-r\int_{\partial B_{p}\left( r\right) }\left\langle
\nabla u,\nabla r\right\rangle e^{-f}+\int_{\partial B_{p}\left( r\right)
}ue^{-f}-\int_{\partial B_{p}\left( r_{0}\right) }ue^{-f} \\
=\int_{\partial B_{p}\left( r\right) }ue^{-f}-\int_{\partial B_{p}\left(
r_{0}\right) }ue^{-f},
\end{gather*}%
where we have used the fact that $\Delta _{f}u=0$ and 
\begin{equation*}
\int_{\partial B_{p}\left( r\right) }\left\langle \nabla u,\nabla
r\right\rangle e^{-f}=\int_{B_{p}\left( r\right) }\left( \Delta _{f}u\right)
e^{-f}=0.
\end{equation*}%
Denote by 
\begin{eqnarray*}
U\left( r\right) &:&=\int_{B_{p}\left( r\right) \backslash B_{p}\left(
r_{0}\right) }ue^{-f}\text{ \ \ \ and} \\
C_{0} &:&=\int_{\partial B_{p}\left( r_{0}\right) }ue^{-f}.
\end{eqnarray*}%
Then the preceding inequality implies that for $r>r_{0},$ 
\begin{equation*}
U^{\prime }\left( r\right) \leq 4aU\left( r\right) +C_{0}.
\end{equation*}%
After integrating from $r_{0}$ to $R>r_{0},$ we obtain 
\begin{equation*}
U\left( R\right) \leq C_{1}e^{4aR}
\end{equation*}%
with $C_{1}:=U\left( r_{0}\right) +\frac{1}{a}C_{0}.$ Consequently, we have 
\begin{equation*}
\int_{B_{p}\left( R\right) }u\leq C_{2}e^{5aR}.
\end{equation*}%
Plugging this into (\ref{a11}), we conclude that 
\begin{equation*}
\sup_{B_{p}\left( \frac{1}{2}R\right) }u\leq C_{3}e^{c\left( n\right) aR},
\end{equation*}%
where $c\left( n\right) $ depends only on dimension $n$ and $C_{3}$ is
independent of $R$. This shows that%
\begin{equation*}
\Omega \left( u\right) :=\underset{R\rightarrow \infty }{\limsup}\left\{ 
\frac{1}{R}\sup_{B_{p}\left( R\right) }\log \left( u+1\right) \right\} \leq
c\left( n\right) a,
\end{equation*}%
where $c\left( n\right) $ depends only on $n.$ By Proposition \ref{P}, we
conclude 
\begin{equation*}
\sup_{M}\left\vert \nabla \log u\right\vert \leq C\left( n\right) a
\end{equation*}%
with $C\left( n\right) $ being a constant depending only on $n,$ as claimed
in Theorem \ref{Grad_Est}. It is evident that if $f$ has sublinear growth,
then $a$ can be taken as close to zero as we wish, thus proving that $u$
must be constant.
\end{proof}

We now turn to polynomial growth $f$-harmonic functions.

\begin{theorem}
\bigskip \label{Sublin} Let $\left( M,g,e^{-f}dv\right) $ be a complete
noncompact smooth metric measure space with $Ric_{f}\geq 0$ and $f$ bounded.
Then a sublinear growth $f$-harmonic function on $M$ must be a constant.
\end{theorem}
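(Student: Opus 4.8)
The plan is to show directly that $|\nabla u|\equiv 0$, adapting the Cheng--Yau argument to the $f$-Laplacian. The starting observation is that $w:=|\nabla u|^{2}$ is $f$-subharmonic: since $\Delta_{f}u=0$ and $Ric_{f}\geq 0$, the Bochner formula recalled in the introduction gives $\tfrac12\Delta_{f}w=|Hess(u)|^{2}+Ric_{f}(\nabla u,\nabla u)\geq 0$. Because $f$ is bounded, the quantity $A(R)=\sup_{B_{p}(3R)}|f|$ is bounded by a constant independent of $R$, so every factor of the form $e^{c_{2}A}$ appearing in Lemma \ref{Vol_Ric_Pos} and Lemma \ref{NS} is uniformly bounded in $R$. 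In particular the $f$-volume is doubling, $V_{f}(B_{p}(2R))\leq C\,V_{f}(B_{p}(R))$ with $C=C(n,b)$.

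Next I would run the Moser iteration exactly as in the proof of Theorem \ref{Grad_Est}, but for the nonnegative subsolution $w$ rather than for $u$, and with all integrals taken against $e^{-f}dv$. As noted after (\ref{a11}), Lemma \ref{NS} and the volume doubling persist for the weighted measure without altering the dependence of the constants on $A$; the subsolution version of the iteration then yields a mean value inequality
\begin{equation*}
\sup_{B_{p}(R)}|\nabla u|^{2}\leq \frac{C}{V_{f}(B_{p}(2R))}\int_{B_{p}(2R)}|\nabla u|^{2}\,e^{-f},
\end{equation*}
with $C$ depending only on $n$ and $b$. To control the right-hand side I would use a Caccioppoli estimate: choosing a cutoff $\phi$ with $\phi=1$ on $B_{p}(2R)$, $\operatorname{supp}\phi\subset B_{p}(4R)$ and $|\nabla\phi|\leq c/R$, and integrating $\langle\nabla(\phi^{2}u),\nabla u\rangle\,e^{-f}$ by parts using $\Delta_{f}u=0$, one gets $\int \phi^{2}|\nabla u|^{2}e^{-f}\leq 4\int u^{2}|\nabla\phi|^{2}e^{-f}$, hence
\begin{equation*}
\int_{B_{p}(2R)}|\nabla u|^{2}\,e^{-f}\leq \frac{C}{R^{2}}\int_{B_{p}(4R)}u^{2}\,e^{-f}.
\end{equation*}

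Finally I would insert the growth hypothesis. Writing $\omega(R):=\sup_{B_{p}(R)}|u|$, the sublinear growth of $u$ means $\omega(R)/R\to 0$ as $R\to\infty$. Combining the last two displays with volume doubling gives
\begin{equation*}
\sup_{B_{p}(R)}|\nabla u|^{2}\leq \frac{C}{R^{2}}\,\frac{V_{f}(B_{p}(4R))}{V_{f}(B_{p}(2R))}\,\omega(4R)^{2}\leq C\left(\frac{\omega(4R)}{R}\right)^{2}\longrightarrow 0
\end{equation*}
as $R\to\infty$. Since every fixed point of $M$ lies in $B_{p}(R)$ for all large $R$, this forces $|\nabla u|\equiv 0$, so $u$ is constant.

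The routine parts are the Bochner and Caccioppoli computations; the step that must be justified with care is the mean value inequality, namely that the De Giorgi--Nash--Moser iteration built on Lemma \ref{Vol_Ric_Pos} and Lemma \ref{NS} applies to a nonnegative $f$-subsolution and with respect to the weighted measure $e^{-f}dv$, yielding constants that depend only on $n$ once $f$ is bounded. This is exactly the ingredient already used for (\ref{a11}), so I expect no essential difficulty beyond this bookkeeping.
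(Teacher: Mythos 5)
Your proposal is correct and follows essentially the same route as the paper's own proof: Bochner to show $|\nabla u|^{2}$ is $f$-subharmonic, the Moser-iteration mean value inequality built on Lemma \ref{Vol_Ric_Pos} and Lemma \ref{NS} (with constants uniform since $f$ is bounded), the Caccioppoli estimate from $\Delta_{f}u=0$, and the sublinear growth to force the averaged Dirichlet energy to vanish. The only differences are cosmetic choices of radii and of weighted versus unweighted volume in the mean value inequality, which are immaterial because $f$ is bounded.
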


\begin{proof}[Proof of Theorem \protect\ref{Sublin}]
Let $w$ be an $f$-harmonic function so that 
\begin{equation}
\lim_{x\rightarrow \infty }\frac{\left\vert w \right\vert\left( x\right) }{%
r\left( x\right) }=0.  \label{b1}
\end{equation}%
The Bochner formula asserts that 
\begin{equation*}
\frac{1}{2}\Delta _{f}\left\vert \nabla w\right\vert ^{2}=\left\vert
w_{ij}\right\vert ^{2}+Ric_{f}\left( \nabla w,\nabla w\right) \geq 0.
\end{equation*}%
So $\left\vert \nabla w\right\vert ^{2}$ is $f$-subharmonic. Applying the
Moser iteration scheme, we obtain a mean value inequality of the form 
\begin{equation}
\sup_{B_{p}\left( \frac{1}{2}R\right) }\left\vert \nabla w\right\vert
^{2}\leq \frac{C}{V\left( B_{p}\left( R\right) \right) }\int_{B_{p}\left(
R\right) }\left\vert \nabla w\right\vert ^{2}e^{-f}  \label{b2}
\end{equation}%
for some constant $C$ depending on $n$ and $\sup \left\vert f\right\vert .$
Note that now the constant $A$ in Lemma \ref{Vol_Ric_Pos} and Lemma \ref{NS}
is independent of $R$ as $f$ is assumed to be bounded.

We now choose a cut-off $\phi $ such that $\phi =1$ on $B_{p}\left( R\right)
,$ $\phi =0$ on $M\backslash B_{p}\left( 2R\right) $ and $\left\vert \nabla
\phi \right\vert \leq \frac{C}{R}.$ Integrating by parts and using $\Delta
_{f}w=0$ we get 
\begin{eqnarray*}
\int_{M}\left\vert \nabla w\right\vert ^{2}\phi ^{2}e^{-f}
&=&-2\int_{M}w\phi \left\langle \nabla w,\nabla \phi \right\rangle e^{-f} \\
&\leq &2\int_{M}\left\vert w\right\vert \phi \left\vert \left\langle \nabla
w,\nabla \phi \right\rangle \right\vert e^{-f} \\
&\leq &\frac{1}{2}\int_{M}\left\vert \nabla w\right\vert ^{2}\phi
^{2}e^{-f}+2\int_{M}w^{2}\left\vert \nabla \phi \right\vert ^{2}e^{-f}.
\end{eqnarray*}%
This shows that%
\begin{eqnarray*}
\int_{B_{p}\left( R\right) }\left\vert \nabla w\right\vert ^{2}e^{-f} &\leq
&4\int_{M}w^{2}\left\vert \nabla \phi \right\vert ^{2}e^{-f}\leq \frac{C}{%
R^{2}}\int_{B_{p}\left( 2R\right) \backslash B_{p}\left( R\right)
}w^{2}e^{-f} \\
&\leq &\frac{C}{R^{2}}\left( \sup_{B_{p}\left( 2R\right) }w^{2}\right)
V\left( B_{p}\left( 2R\right) \right) \\
&\leq &\frac{C}{R^{2}}\left( \sup_{B_{p}\left( 2R\right) }w^{2}\right)
V\left( B_{p}\left( R\right) \right) ,
\end{eqnarray*}%
where in the last line we have used Lemma \ref{Vol_Ric_Pos}.

Together with (\ref{b1}) we obtain 
\begin{equation*}
\lim_{R\rightarrow \infty }\frac{1}{V\left( B_{p}\left( R\right) \right) }%
\int_{B_{p}\left( R\right) }\left\vert \nabla w\right\vert ^{2}e^{-f}=0.
\end{equation*}%
So by (\ref{b2}), $\left\vert \nabla w\right\vert =0$ on $M.$ The Theorem is
proved.
\end{proof}

Our next result is a dimension estimate for the space of polynomial growth $%
f $-harmonic functions. For this, we use again Moser iteration and Sobolev
inequality. However, the situation here is considerably easier as $f$ is
assumed to be bounded.

\begin{theorem}
\label{Polynomial} Let $\left( M,g,e^{-f}dv\right) $ be a complete
noncompact smooth metric measure space with $Ric_{f}\geq 0$ and $f$ bounded.
Then there exists a constant $\mu >0$ such that 
\begin{equation*}
\dim \mathcal{H}^{d}\left( M\right) \leq Cd^{\mu }\ \ \text{for any\ }d\geq
1.
\end{equation*}%
Moreover, we have the sharp estimate 
\begin{equation*}
\dim \mathcal{H}^{1}\left( M\right) \leq n+1.
\end{equation*}
\end{theorem}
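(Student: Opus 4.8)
The plan is to run the Colding--Minicozzi--Li dimension-counting machinery, whose only structural inputs are a scale-invariant volume doubling property and a mean value (sub-mean-value) inequality for $f$-subharmonic functions; both are available here precisely because $f$ is bounded. Writing $A_0:=\sup_M|f|<\infty$, the quantity $A(R)$ of \eqref{a5} satisfies $A(R)\le A_0$ uniformly, so Lemma \ref{Vol_Ric_Pos} gives a doubling estimate $V(B_x(2r))\le 2^{2n}e^{6A_0}V(B_x(r))=:C_D\,V(B_x(r))$ with $C_D$ independent of center and scale; moreover $e^{-A_0}\le e^{-f}\le e^{A_0}$ makes $V_f$ and $V$ uniformly comparable, so doubling holds for either measure. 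The mean value inequality is obtained by Moser iteration exactly as in \eqref{a11} and \eqref{b2} (using Lemma \ref{NS} and Lemma \ref{Vol_Ric_Pos}): for a nonnegative $f$-subharmonic $w$ one has $\sup_{B_p(R/2)}w\le C\,V(B_p(R))^{-1}\int_{B_p(R)}w\,e^{-f}$ with $C=C(n,A_0)$. For $u\in\mathcal{H}^{d}$ I apply this to $w=u^{2}$ (note $\Delta_f u^2=2|\nabla u|^2\ge 0$) and, for the linear case, to $w=|\nabla u|^2$.

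For the bound $\dim\mathcal{H}^{d}(M)\le Cd^{\mu}$, I would invoke the Colding--Minicozzi--Li counting scheme, which yields such an estimate from exactly the doubling and mean value inequalities above (the Sobolev inequality of Lemma \ref{NS} enters only through these). Fix a $k$-dimensional subspace $K\subseteq\mathcal{H}^{d}$ and, for each $R$, the inner product $A_R(u,v):=\int_{B_p(R)}uv\,e^{-f}$, with $\det K(R)$ the associated Gram determinant (ratios $\det K(R_2)/\det K(R_1)$ being basis-free). The scheme compares, along a geometric sequence of radii $R_0,\beta R_0,\beta^2R_0,\dots$, an upper bound for the average growth of $\det K(R)$ coming from the polynomial growth $|u|\le C(r+1)^{d}$ together with doubling (which gives $A_{\beta^m R_0}(u,u)\le C(\beta^m R_0)^{2d}V(B_{\beta^m R_0})$), against a lower bound for each one-step ratio $\det K(\beta R)/\det K(R)$ coming from the mean value inequality, into which the dimension $k$ enters. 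Balancing the two and letting $m\to\infty$ produces $k\le Cd^{\mu}$ with $\mu=\log_2 C_D-1$; since $C_D$ depends only on $n$ and $\sup|f|$, so do $\mu$ and $C$.

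The sharp estimate $\dim\mathcal{H}^{1}(M)\le n+1$ requires a separate and more delicate argument, and this is where I expect the main difficulty. The constants contribute a $1$, so it suffices to show $\dim(\mathcal{H}^{1}/\mathbb{R})\le n$. First, a Caccioppoli estimate (integrate $\Delta_f u=0$ against $u\phi^2$ as in the proof of Theorem \ref{Sublin}) together with linear growth gives $\int_{B_p(R)}|\nabla u|^2e^{-f}\le C\,V(B_p(R))$, and the mean value inequality applied to the $f$-subharmonic function $|\nabla u|^2$ then shows $|\nabla u|$ is globally bounded for every $u\in\mathcal{H}^{1}$. This lets me define the symmetric bilinear form at infinity
\[
I(u,v):=\lim_{R\to\infty}\frac{1}{V_f(B_p(R))}\int_{B_p(R)}\langle\nabla u,\nabla v\rangle\,e^{-f}
\]
on $\mathcal{H}^{1}/\mathbb{R}$; it is positive definite there, since $I(u,u)=0$ forces, via the mean value inequality for $|\nabla u|^2$, that $\nabla u\equiv 0$. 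The remaining and hardest point is to bound the dimension of this form by $n$: for an $I$-orthonormal basis $u_1,\dots,u_m$ the Gram matrices $G_{ij}(x)=\langle\nabla u_i(x),\nabla u_j(x)\rangle$ have pointwise rank at most $n$ and their weighted averages over $B_p(R)$ converge to the identity, but extracting $m\le n$ from this needs control of the \emph{averaged eigenvalues}, not merely the rank. This in turn rests on an asymptotically sharp sub-mean-value estimate for $|\nabla u|^2$ (following the Colding--Minicozzi / Li--Tam treatment of the linear case), rather than the soft $m\le Cn$ that the crude mean value inequality provides; securing this sharp eigenvalue control is the crux of the $d=1$ statement.
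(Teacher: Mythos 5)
Your treatment of the case $d\geq 1$ matches the paper: bounded $f$ gives uniform doubling and a uniform mean value inequality via Lemma \ref{Vol_Ric_Pos}, Lemma \ref{NS} and Moser iteration, and the Colding--Minicozzi--Li counting scheme then yields $\dim\mathcal{H}^{d}(M)\leq Cd^{\mu}$; the paper simply quotes \cite{L1} at this point. The genuine gap is in the case $d=1$, and it is exactly where you say ``the crux'' lies: you never establish the sharp asymptotic mean value property that your argument (and any argument of this type) hinges on. Concretely, what is needed is statement (\ref{f1}) of the paper: for every bounded nonnegative $f$-subharmonic $v$,
\begin{equation*}
\lim_{R\rightarrow\infty}\frac{1}{V_{f}\left(B_{p}\left(R\right)\right)}\int_{B_{p}\left(R\right)}v\,e^{-f}=\sup_{M}v .
\end{equation*}
Without it your form $I$ need not even be well defined (the existence of the limit defining $I(u,v)$ has to be proved), the identification $\sup_{M}\left\vert\nabla u_{1}\right\vert^{2}=I(u_{1},u_{1})=1$ fails, and the ``averaged eigenvalue'' control you flag as missing cannot be obtained. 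The paper proves (\ref{f1}) by setting $w=\sup_{M}v-v$, solving $\Delta_{f}h_{R}=0$ in $B_{p}(R)$ with $h_{R}=w$ on the boundary, using the Harnack inequality (with uniform constants because $f$ is bounded) to make $h_{R}$ uniformly small on $B_{p}(R/2)$, and then a monotonicity formula --- $r^{-C}\int_{\partial B_{p}(r)}h_{R}e^{-f}$ is nonincreasing in $r$, a consequence of (\ref{v2}) --- to propagate this smallness to $\int_{\partial B_{p}(R)}w\,e^{-f}$ and hence to the volume average. This is a substantive step, not a citation, and it is absent from your proposal.

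Second, once (\ref{f1}) is in hand, the paper does not go through the pointwise-rank/eigenvalue argument you sketch. Following Li--Tam, it takes an orthonormal basis $\{u_{i}\}$ of a subspace of $\{u\in\mathcal{H}^{1}(M):u(p)=0\}$ with respect to $\langle\langle\cdot,\cdot\rangle\rangle$, sets $F^{2}=\sum u_{i}^{2}$, deduces $F(x)\leq r(x)$ from $\left\vert\nabla F\right\vert\leq 1$, and integrates $\Delta_{f}F^{2}$ by parts to obtain $(l-\varepsilon)/R\leq \left(V_{f}\left(B_{p}\left(R\right)\right)\right)'/V_{f}\left(B_{p}\left(R\right)\right)$; integrating this and comparing with $V_{f}\left(B_{p}\left(R\right)\right)\leq CR^{n}$ from Lemma \ref{Vol_Ric_Pos} gives $l\leq n$. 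This route bypasses entirely the averaged-eigenvalue difficulty you identify, trading it for the polynomial volume upper bound. Your CCM-style route could also be made to work (pointwise rank $\leq n$ plus top eigenvalue $\leq 1$, the latter from (\ref{f1}) applied to $\left\vert\nabla\left(\sum a_{i}u_{i}\right)\right\vert^{2}$, gives $\mathrm{tr}\,G(x)\leq n$ and hence $m\leq n$ after averaging), but as written you have only named the needed estimate, not proved it.
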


\begin{proof}[Proof of Theorem \protect\ref{Polynomial}]
We first establish the second result about the dimension of the space of
linear growth $f$-harmonic functions. Consider $u$ an $f$-harmonic function
with linear growth, i.e., $\Delta _{f}u=0$ and $\left\vert u\right\vert
\left( x\right) \leq C\left( r\left( x\right) +1\right) $ on $M.$ Using the
argument in the proof of Theorem \ref{Sublin} we want to claim that $%
\left\vert \nabla u\right\vert $ is bounded on $M.$ Indeed, since $f$ is
bounded, (\ref{b2}) is true here, too. Now the reverse Poincar\'e inequality
for $f$-harmonic function yields 
\begin{equation*}
\frac{1}{V\left( B_{p}\left( R\right) \right) }\int_{B_{p}\left( R\right)
}\left\vert \nabla u\right\vert ^{2}e^{-f}\leq \frac{C}{R^{2}}%
\sup_{B_{p}\left( 2R\right) }u^{2}\leq C.
\end{equation*}%
By (\ref{b2}) this shows that $\left\vert \nabla u\right\vert $ is bounded
on $M.$

We now prove a mean value theorem at infinity of the following form. For any
bounded positive $f$-subharmonic function $v$ we have%
\begin{equation}
\lim_{R\rightarrow \infty }\frac{1}{V_{f}\left( B_{p}\left( R\right) \right) 
}\int_{B_{p}\left( R\right) }ve^{-f}=\sup_{M}v.  \label{f1}
\end{equation}%
When $f$ is constant this was first established by P. Li by a heat equation
method. Here, we follow the argument in \cite{CCM} which uses a monotonicity
formula.

Let $w=\sup_{M}v-v,$ which is a positive function that satisfies 
\begin{equation*}
\Delta _{f}w\leq 0\ \ \text{and\ \ }\inf_{M}w=0.
\end{equation*}%
To prove (\ref{f1}) we show instead 
\begin{equation}
\lim_{R\rightarrow \infty }\frac{1}{V_{f}\left( B_{p}\left( R\right) \right) 
}\int_{B_{p}\left( R\right) }we^{-f}=0.  \label{f2}
\end{equation}%
Let $h_{R}$ solve $\Delta _{f}h_{R}=0$ in $B_{p}\left( R\right) $ with $%
h_{R}=w$ on $\partial B_{p}\left( R\right) .$ By the maximum principle, $%
h_{R}$ is positive and uniformly bounded. Moreover, since $\inf_{M}w=0,$ for
any $\varepsilon >0$ there exists $R_{\varepsilon }>0$ such that 
\begin{equation*}
\inf_{B_{p}\left( R\right) }w<\varepsilon \ \ \text{for any }%
R>R_{\varepsilon }.
\end{equation*}%
Again, by the maximum principle, it follows that 
\begin{equation*}
\inf_{B_{p}\left( R\right) }h_{R}<\varepsilon \text{\ \ \ for any }%
R>R_{\varepsilon }.
\end{equation*}%
Notice the following Harnack inequality holds. 
\begin{equation*}
\sup_{B_{p}\left( \frac{1}{2}R\right) }h_{R}\leq C\inf_{B_{p}\left( \frac{1}{%
2}R\right) }h_{R},
\end{equation*}%
where $C$ depends only on $n$ and $\sup_{M}\left\vert f\right\vert .$
Indeed, this follows from Lemma \ref{Vol_Ric_Pos}, Lemma \ref{NP} and Lemma %
\ref{NS} by the Moser iteration argument as in \cite{SC1}, Chapter II.

We therefore conclude that 
\begin{equation}
\sup_{B_{p}\left( \frac{1}{2}R\right) }h_{R}<C\varepsilon \ \ \ \text{for\ \
any }R>2R_{\varepsilon }.  \label{f3}
\end{equation}%
Furthermore, for $R>r>0,$ we have%
\begin{eqnarray*}
0 &=&\int_{B_{p}\left( r\right) }\left( \Delta _{f}h_{R}\right)
e^{-f}=\int_{\partial B_{p}\left( r\right) }\frac{\partial h_{R}}{\partial r}%
e^{-f} \\
&=&\frac{\partial }{\partial r}\int_{\partial B_{p}\left( r\right)
}h_{R}e^{-f}-\int_{\partial B_{p}\left( r\right) }h_{R}\Delta _{f}\left(
r\right) e^{-f} \\
&\geq &\frac{\partial }{\partial r}\int_{\partial B_{p}\left( r\right)
}h_{R}e^{-f}-\frac{C}{r}\int_{\partial B_{p}\left( r\right) }h_{R}e^{-f},
\end{eqnarray*}%
where in the last line we have used (\ref{v2}) and the fact that $f$ is
bounded. This shows that $\log \left( \frac{1}{r^{C}}\int_{\partial
B_{p}\left( r\right) }h_{R}e^{-f}\right) $ is decreasing as a function of $r$
for $0<r<R.$ In particular, it shows that 
\begin{equation*}
\int_{\partial B_{p}\left( R\right) }h_{R}e^{-f}\leq C\int_{\partial
B_{p}\left( \frac{1}{2}R\right) }h_{R}e^{-f}
\end{equation*}%
for a constant $C$ depending only on $n$ and $\sup_{M}\left\vert
f\right\vert .$

So for $R>2R_{\varepsilon },$ 
\begin{eqnarray*}
\int_{\partial B_{p}\left( R\right) }we^{-f} &=&\int_{\partial B_{p}\left(
R\right) }h_{R}e^{-f}\leq C\int_{\partial B_{p}\left( \frac{1}{2}R\right)
}h_{R}e^{-f} \\
&\leq &C\varepsilon A_{f}\left( \partial B_{p}\left( \frac{1}{2}R\right)
\right) ,
\end{eqnarray*}%
where in the second line we have used (\ref{f3}). Since this inequality is
true for all $R>2R_{\varepsilon },$ integrating this from $2R_{\varepsilon }$
to $R$ gives 
\begin{eqnarray*}
\int_{B_{p}\left( R\right) \backslash B_{p}\left( 2R_{\varepsilon }\right)
}we^{-f} &\leq &C\varepsilon \int_{2R_{\varepsilon }}^{R}A_{f}\left(
\partial B_{p}\left( \frac{1}{2}t\right) \right) dt \\
&\leq &C\varepsilon V_{f}\left( B_{p}\left( \frac{1}{2}R\right) \right) .
\end{eqnarray*}%
Hence, for $R$ sufficiently large, 
\begin{gather*}
\frac{1}{V_{f}\left( B_{p}\left( R\right) \right) }\int_{B_{p}\left(
R\right) }we^{-f}=\frac{1}{V_{f}\left( B_{p}\left( R\right) \right) }%
\int_{B_{p}\left( R\right) \backslash B_{p}\left( 2R_{\varepsilon }\right)
}we^{-f} \\
+\frac{1}{V_{f}\left( B_{p}\left( R\right) \right) }\int_{B_{p}\left(
2R_{\varepsilon }\right) }we^{-f} \\
\leq C\varepsilon +\frac{1}{V_{f}\left( B_{p}\left( R\right) \right) }%
\int_{B_{p}\left( 2R_{\varepsilon }\right) }we^{-f}\leq 2C\varepsilon .
\end{gather*}%
This proves (\ref{f2}).

The rest of the argument now follows verbatim \cite{LT}. For completeness we
sketch it below.

For $u,v\in \mathcal{H}^{1}\left( M\right) $ define 
\begin{equation*}
\left\langle \left\langle u,v\right\rangle \right\rangle
:=\lim_{R\rightarrow \infty }\frac{1}{V_{f}\left( B_{p}\left( R\right)
\right) }\int_{B_{p}\left( R\right) }\left\langle \nabla u,\nabla
v\right\rangle e^{-f}.
\end{equation*}%
This is well defined in view of (\ref{f2}). Also, $\left\langle \left\langle
,\right\rangle \right\rangle $ defines an inner product on 
\begin{equation*}
\mathcal{H}^{\prime }:=\left\{ u\in \mathcal{H}^{1}\left( M\right) :\
u\left( p\right) =0\right\} .
\end{equation*}%
Consider any finite dimensional subspace $\mathcal{H}^{\prime \prime }$ of $%
\mathcal{H}^{\prime }$, of dimension $l.$ Let $\left\{
u_{1},...,u_{l}\right\} $ be an orthonormal basis of $\left( \mathcal{H}%
^{\prime \prime },\left\langle \left\langle ,\right\rangle \right\rangle
\right) $ and define 
\begin{equation*}
F^{2}\left( x\right) :=\sum\limits_{i=1}^{l}u_{i}^{2}\left( x\right).
\end{equation*}%
Note $F$ is independent of the choice of $\left\{ u_{i}\right\} .$ For a
fixed point $x\in M,$ we may choose $\left\{ u_{i}\right\} $ so that $%
u_{i}\left( x\right) =0$ for all $i\neq 1$. Then it follows that%
\begin{eqnarray*}
F^{2}\left( x\right) &=&u_{1}^{2}\left( x\right) \ \ \text{and} \\
F\left( x\right) \nabla F\left( x\right) &=&u_{1}\left( x\right) \nabla
u_{1}\left( x\right) .
\end{eqnarray*}%
Since $\left\langle \left\langle u_{1},u_{1}\right\rangle \right\rangle =1,$
we have 
\begin{equation*}
\sup_{M}\left\vert \nabla u_{1}\right\vert =1.
\end{equation*}%
This shows that $\left\vert \nabla F\right\vert \left( x\right) \leq 1,$
too. Integrating along minimizing geodesics and using $F\left( p\right) =0,$
we get that $F\left( x\right) \leq r\left( x\right) .$ On the other hand, 
\begin{gather}
2\sum\limits_{i=1}^{l}\int_{B_{p}\left( R\right) }\left\vert \nabla
u_{i}\right\vert ^{2}e^{-f}=\int_{B_{p}\left( R\right) }\left( \Delta
_{f}F^{2}\right) e^{-f}  \label{f4} \\
\leq 2\int_{\partial B_{p}\left( R\right) }F\left\vert \nabla F\right\vert
e^{-f}\leq 2RA_{f}\left( \partial B_{p}\left( R\right) \right) .  \notag
\end{gather}%
Since $\left\{u_{i}\right\} $ is orthonormal with respect to $\left\langle
\left\langle ,\right\rangle \right\rangle ,$ for any $\varepsilon >0$ there
exists $R_{\varepsilon}$ such that for $R>R_{\varepsilon},$ 
\begin{equation*}
\sum\limits_{i=1}^{l}\frac{1}{V_{f}\left( B_{p}\left( R\right) \right) }%
\int_{B_{p}\left( R\right) }\left\vert \nabla u_{i}\right\vert
^{2}e^{-f}\geq l-\varepsilon.
\end{equation*}%
So, according to (\ref{f4}), for any $R\geq R_{\varepsilon },$ 
\begin{equation*}
\frac{l-\varepsilon }{R}\leq \frac{\left( V_{f}\left( B_{p}\left( R\right)
\right) \right) ^{\prime }}{V_{f}\left( B_{p}\left( R\right) \right) }.
\end{equation*}%
Integrating the inequality from $R_{\varepsilon }$ to $R,$ we then conclude
that 
\begin{equation}
\left( \frac{R}{R_{\varepsilon }}\right) ^{l-\varepsilon }\leq \frac{%
V_{f}\left( B_{p}\left( R\right) \right) }{V_{f}\left( B_{p}\left(
R_{\varepsilon }\right) \right) }  \label{f5}
\end{equation}%
On the other hand, according to Lemma \ref{Vol_Ric_Pos}, we have 
\begin{equation*}
V_{f}\left( B_{p}\left( R\right) \right) \leq CR^{n}.
\end{equation*}%
Plugging into (\ref{f5}), we conclude that 
\begin{equation*}
\dim \mathcal{H}^{1}\left( M\right) \leq n+1.
\end{equation*}
This proves the second claim of Theorem \ref{Polynomial}.

To prove that $\dim \mathcal{H}^{d}\left( M\right) \leq Cd^{\mu }$ for $d>1,$
we observe first that since $f$ is bounded we have for any $x\in M,$ 
\begin{equation*}
\frac{V\left( B_{x}\left( R\right) \right) }{V\left( B_{x}\left( r\right)
\right) }\leq C_{v}\left( \frac{R}{r}\right) ^{\mu }\ \ \text{for }r<R,
\end{equation*}%
where $C_{v}$ depends on $n$ and $\sup_{M}\left\vert f\right\vert .$
Moreover, the Sobolev inequality Lemma \ref{NS} and the Moser iteration
imply that we have a mean value inequality of the form 
\begin{equation*}
u^{2}\left( x\right) \leq C_{\mathcal{M}}\frac{1}{V\left( B_{x}\left(
R\right) \right) }\int_{B_{x}\left( R\right) }u^{2}
\end{equation*}%
for any nonnegative $f$-subharmonic function $u$ on $M.$ The result in \cite%
{L1} then implies $\dim \mathcal{H}^{d}\left( M\right) \leq Cd^{\mu }.$ The
theorem is proved.
\end{proof}

\section{Rigidity \label{Rigid}}

In this section we investigate the structure at infinity of smooth metric
measure spaces whose $\lambda _{1}\left( M\right) $ attains its upper bound
in Theorem \ref{Est_Ric_Pos}. We prove our result by using a Busemann
function argument, which is similar to the one in \cite{LW1}. A manifold $M$
is called connected at infinity if it has only one end.

\begin{theorem}
\label{Rigid_Ric_Pos} Let $\left( M,g,e^{-f}dv\right) $ be a smooth metric
measure space such that $Ric_{f}\geq 0$. Assume that $\lambda _{1}\left(
M\right) =\frac{1}{4}a^{2},$ where $a$ is the linear growth rate of $f.$
Then, either $M$ is connected at infinity or $M$ is isometric to $\mathbb{R}%
\times N$ for some compact manifold $N$.
\end{theorem}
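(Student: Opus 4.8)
The plan is to establish the dichotomy by showing that if $M$ fails to be connected at infinity then it must split. Assume throughout that $a>0$; when $a=0$ the potential $f$ is of sublinear growth and $\lambda_1(M)=0$, a degenerate situation that I would treat separately. So suppose $M$ has at least two ends $E_1,E_2$. Taking sequences $p_i\to\infty$ in $E_1$ and $q_i\to\infty$ in $E_2$, the minimizing geodesics joining $p_i$ to $q_i$ are forced through a fixed compact core separating the ends, and a limiting argument produces a geodesic line $\gamma:\mathbb{R}\to M$. Associated to the two rays of $\gamma$ I form the Busemann functions $b^{+}(x)=\lim_{t\to\infty}\left(t-d(x,\gamma(t))\right)$ and $b^{-}(x)=\lim_{t\to\infty}\left(t-d(x,\gamma(-t))\right)$, which are Lipschitz, satisfy $|\nabla b^{\pm}|=1$ wherever differentiable, and obey $b^{+}+b^{-}\ge 0$ with equality along $\gamma$.

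The central object is a positive $\lambda_1$-eigenfunction, i.e.\ a function $u>0$ with $\Delta_f u=-\frac{a^2}{4}u$. Such a $u$ is produced by the standard exhaustion-and-Harnack construction: on a compact exhaustion $\Omega_i$ one has $\lambda_1(\Omega_i)>\lambda_1(M)=\frac14 a^2$, so $\Delta_f+\frac14 a^2$ is invertible with Dirichlet data; solving with boundary data separating the two ends, normalizing at $p$, and passing to a Harnack limit (Lemma \ref{Vol_Ric_Pos} and the Moser iteration behind Lemma \ref{NS} supply the Harnack inequality) yields $u>0$ on $M$. The two-endedness lets me arrange that $u$ grows at one end while decaying at the other, which is what ultimately makes the integral estimate below converge. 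The Laplacian comparison (\ref{v2}), applied along $\gamma$, gives $\Delta_f b^{\pm}\ge -a$ in the barrier sense once one knows that $f$ attains its maximal rate along $\gamma$, i.e.\ $f(\gamma(t))\sim a t$; granting this, $e^{\frac a2 b^{+}}$ is a positive $\lambda_1$-subsolution which I would compare with $u$. The aim of this step is the pointwise rigidity $|\nabla\log u|\equiv\frac a2$; equivalently, writing $b:=\frac2a\log u$, that $|\nabla b|\equiv1$ and $\Delta_f b\equiv -a$, so that $b=b^{+}=-b^{-}$ is a smooth realization of the Busemann function.

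Granting this rigidity, the splitting is immediate from Bochner. Put $\beta=\log u$, so that $\Delta_f\beta=-\frac{a^2}{4}-|\nabla\beta|^2=-\frac{a^2}{2}$ and $|\nabla\beta|^2\equiv\frac{a^2}{4}$ are both constant. The Bochner formula $\frac12\Delta_f|\nabla\beta|^2=|Hess\,\beta|^2+\langle\nabla\beta,\nabla\Delta_f\beta\rangle+Ric_f(\nabla\beta,\nabla\beta)$ then has vanishing left side and vanishing middle term, so $Ric_f\ge0$ forces $|Hess\,\beta|^2\equiv0$. Thus $\nabla\beta$ is a nontrivial parallel vector field of constant length, and the de Rham decomposition theorem for the complete manifold $M$ yields an isometric splitting $M=\mathbb{R}\times N$ in which the $\mathbb{R}$-factor is the flow of $\nabla\beta/|\nabla\beta|$ and $N$ is a level set $\{\beta=\text{const}\}$. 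The two ends correspond to $\beta\to\pm\infty$, and since each level set is compact—otherwise the end structure or the volume bound $V_f(B_p(R))\le CR^{n}e^{aR}$ of (\ref{v5}) would be violated—the factor $N$ is compact, as claimed.

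The main obstacle is the rigidity step of the second paragraph: upgrading the sharp spectral equality $\lambda_1(M)=\frac14 a^2$ into the pointwise identity $|\nabla\log u|\equiv\frac a2$. Two difficulties are intertwined. First, the comparison (\ref{v2}) only yields the crude asymptotic bound $\Delta_f r\le 4a$, whereas the sharp constant needed for $\Delta_f b^{\pm}\ge -a$ requires that $f$ grow at the maximal rate $a$ precisely along $\gamma$ (so that the terms $-\frac2r f(r)$ and $\frac2{r^2}\int_0^r f$ combine correctly); establishing this alignment is exactly where the spectral equality, rather than the mere inequality of Theorem \ref{Est_Ric_Pos}, must be fed back in. Second, because the eigenfunction grows exponentially, the natural route to the pointwise identity is an integrated Bochner argument—multiplying the differential inequality for $|\nabla\beta|^2$ by a cut-off and integrating against $e^{-f}dv$—whose boundary and cut-off error terms must be shown to vanish using Lemma \ref{Vol_Ric_Pos} and the decay of $u$ on the second end. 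Controlling this interplay of the sharp spectral constant, the volume comparison, and the two-ended growth and decay of $u$, together with the usual barrier justifications across the cut locus, is the technical heart of the argument.
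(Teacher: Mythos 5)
Your proposal stalls at exactly the step that carries the whole theorem: you state that upgrading the spectral equality $\lambda _{1}\left( M\right) =\frac{1}{4}a^{2}$ to the pointwise identity $\left\vert \nabla \log u\right\vert \equiv \frac{a}{2}$ is ``the technical heart'' and leave it unproved, so what you have actually established is only the routine Bochner/de Rham endgame. Moreover, the two difficulties you flag are not the right ones. The inequality $\Delta _{f}\beta \geq -a$ does \emph{not} require $f$ to grow at the maximal rate along the ray: for any $\alpha $ with $\left\vert f\right\vert \leq \alpha r+C$ on $M,$ the comparison (\ref{v2}) applied from $\gamma \left( t\right) $ gives $\Delta _{f}d\left( \gamma \left( t\right) ,x\right) \leq \frac{n-1+4\left( \alpha d\left( p,x\right) +C\right) }{r}+\alpha $ (using $\left\vert f\left( \tau _{t}\left( s\right) \right) \right\vert \leq \alpha \left( r-s\right) +\alpha d\left( p,x\right) +C$), and letting $t\rightarrow \infty $ and then $\alpha \downarrow a$ yields $\Delta _{f}\beta \geq -a$ with no alignment hypothesis whatsoever. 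The genuinely missing ingredient is one you never mention: the weighted volume structure of the ends. The paper first shows (Lemma \ref{nonparabolic}) that at most one end is $f$-nonparabolic, and then invokes the Li--Wang decay estimate $V_{f}\left( F\backslash B_{p}\left( R\right) \right) \leq Ce^{-aR}$ for the $f$-parabolic end $F$ --- valid precisely because $\lambda _{1}\left( M\right) =\frac{1}{4}a^{2}$ --- together with the growth bound $V_{f}\left( B_{p}\left( R\right) \cap E\right) \leq Ce^{aR}$ on the nonparabolic end obtained by integrating $\Delta _{f}e^{a\beta }\geq 0.$ This is where the sharp spectral constant is actually ``fed back in,'' and it is exactly what makes the cut-off error $\int_{M}\left\vert \nabla \phi \right\vert ^{2}B^{2}e^{-f}\leq \frac{C}{R}$ vanish; without it your integrated argument cannot close, since $B^{2}=e^{a\beta }$ grows like $e^{ar}$ on $F.$

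The paper also takes a structurally simpler route than the one you sketch: it never constructs a positive eigenfunction by exhaustion and uses a single Busemann function of a ray in the parabolic end rather than a line. Setting $B:=e^{\frac{1}{2}a\beta },$ the inequality $\Delta _{f}B\geq -\frac{1}{4}a^{2}B$ combined with the variational characterization of $\lambda _{1}\left( M\right) =\frac{1}{4}a^{2},$ tested against $B\phi ,$ forces $\int_{M}B\left( \Delta _{f}B+\frac{1}{4}a^{2}B\right) \phi ^{2}e^{-f}\leq \int_{M}\left\vert \nabla \phi \right\vert ^{2}B^{2}e^{-f}\rightarrow 0,$ hence $\Delta _{f}B+\frac{1}{4}a^{2}B=0$ identically; this yields $\left\vert \nabla \beta \right\vert =1$ and $\Delta _{f}\beta =-a$ everywhere, after which Bochner kills $Hess(\beta )$ as in your last paragraph. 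Even if you insist on the eigenfunction-comparison route, you would still need the same volume decay of the parabolic ends to justify the integrations by parts, so the gap is real either way.
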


Before proving the Theorem, we recall some terminology. First, a manifold is 
$f$-nonparabolic if $\Delta _{f}$ admits a positive Green's function.
Otherwise, it is called $f$-parabolic. For an end of the manifold, the same
definition applies, where the Green's function now refers to the one
satisfying the Neumann boundary conditions. We will divide our proof into
two cases according to the type of ends of $M.$ In fact, the result for $f$%
-nonparabolic ends does not require any assumption on $f$ or $\lambda
_{1}\left( M\right).$ We state it in the following Lemma.

\begin{lemma}
\label{nonparabolic} Let $\left( M,g,e^{-f}dv\right) $ be a smooth metric
measure space with $Ric_{f}\geq 0$. Then $M$ has at most one $f$%
-nonparabolic end.
\end{lemma}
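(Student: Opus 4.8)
The plan is to argue by contradiction and reduce the statement to the Liouville property for bounded $f$-harmonic functions. Suppose $M$ has at least two $f$-nonparabolic ends, say $E_1$ and $E_2$. The goal is to manufacture a bounded, nonconstant $f$-harmonic function $u$ on all of $M$. Once this is achieved, I invoke the fact — recorded after Proposition \ref{P} and established by Brighton \cite{B} — that on a complete manifold with $Ric_f\geq 0$ every positive $f$-harmonic function of sub-exponential growth, in particular every bounded one, must be constant. Since the function I build will satisfy $0\leq u\leq 1$, the positive bounded function $u+1$ is $f$-harmonic and hence constant, forcing $u$ to be constant as well. A nonconstant $u$ thus contradicts the two-end assumption. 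Note that this is the step where the absence of any growth hypothesis on $f$ (as in the statement of the Lemma) is accommodated, since the relevant Liouville theorem requires only $Ric_f\geq 0$.

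To construct $u$ I follow the Li-Tam procedure \cite{LT}. Fix a compact domain $\Omega_0$ whose complement splits as a disjoint union of the ends, and take a smooth exhaustion $\Omega_0\subset\Omega_1\subset\cdots$ of $M$. On each $\Omega_i$ I solve the Dirichlet problem
\begin{equation*}
\Delta_f u_i=0 \ \text{ in } \Omega_i, \qquad u_i=1 \ \text{ on } \partial\Omega_i\cap E_1, \qquad u_i=0 \ \text{ on } \partial\Omega_i\setminus E_1 ,
\end{equation*}
so that the maximum principle gives $0\leq u_i\leq 1$. The analytic input needed to pass to a limit is a local Harnack inequality and interior gradient bound for $f$-harmonic functions, which follow from the volume comparison of Lemma \ref{Vol_Ric_Pos} together with the Neumann Sobolev inequality of Lemma \ref{NS} through the same Moser iteration that produced the mean value inequality (\ref{a11}). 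These give locally uniform bounds on $u_i$ and on $|\nabla u_i|$, so a subsequence converges locally uniformly to an $f$-harmonic limit $u$ with $0\leq u\leq 1$.

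The crux is to verify that the limit $u$ is nonconstant, and this is exactly where the two $f$-nonparabolic ends are used. I would phrase $f$-nonparabolicity of an end through its capacity: the monotone limit of the capacity potentials on $\Omega_i\setminus\Omega_0$ stays strictly below $1$ at infinity along a nonparabolic end, equivalently the barrier functions do not collapse to the constant $1$. Applying this to $E_1$ keeps $u$ bounded away from $0$ along $E_1$, while applying it to $E_2$ together with the boundary normalization keeps $u$ bounded away from $1$ along $E_2$; hence $u$ takes two distinct values. A convenient way to package the estimate is to check that the construction simultaneously yields finite weighted Dirichlet energy $\int_M|\nabla u|^2 e^{-f}<\infty$, bounded below by the sum of the positive capacities of $E_1$ and $E_2$, so that the energy cannot vanish and $u$ cannot be constant.

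The main obstacle is precisely this nonconstancy step: it requires transplanting the classical capacity theory of nonparabolic ends to the weighted setting $(M,g,e^{-f}dv)$ and confirming that the limiting potentials genuinely separate the two ends, with the weighted volume comparison of Lemma \ref{Vol_Ric_Pos} supplying the Harnack estimate that makes the limit robust. Everything else is routine: the solvability of the Dirichlet problems, the extraction of a convergent subsequence, and, once $u$ is known to be bounded, nonconstant and $f$-harmonic, the immediate contradiction with the Liouville property. This establishes that $M$ can carry at most one $f$-nonparabolic end.
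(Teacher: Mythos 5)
Your argument is essentially the paper's own (first) proof: the Li--Tam exhaustion construction produces a bounded, nonconstant $f$-harmonic function with finite weighted Dirichlet energy from the assumption of two $f$-nonparabolic ends, and Brighton's Liouville theorem for bounded $f$-harmonic functions under $Ric_f\geq 0$ then gives the contradiction. The paper simply cites \cite{L} for the construction you sketch in detail, and additionally records a second, self-contained route via the Bochner formula and a cut-off argument, but your approach matches its primary proof.
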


\begin{proof}[Proof of Lemma \protect\ref{nonparabolic}]
Suppose $M$ has two $f$-nonparabolic ends. Then $M$ admits a positive
non-constant bounded $f$-harmonic function $v$ with $\int_{M}\left\vert
\nabla v\right\vert ^{2}e^{-f}<\infty .$ This kind of result was first
discovered by Li and Tam (see \cite{L}). Now according to a result of
Brighton \cite{B}, since $v$ is bounded, it must be a constant function.
This is a contradiction.

Alternatively, the Lemma may be proved as follows. Using the Bochner formula 
\begin{equation*}
\frac{1}{2}\Delta _{f}\left\vert \nabla v\right\vert ^{2}=\left\vert
v_{ij}\right\vert ^{2}+\left\langle \nabla \Delta _{f}v,\nabla
v\right\rangle +Ric_{f}\left( \nabla v,\nabla v\right) \geq \left\vert
v_{ij}\right\vert ^{2}
\end{equation*}%
and a cut-off argument, we get 
\begin{gather*}
2\int_{M}\left\vert v_{ij}\right\vert ^{2}e^{-f}\phi ^{2}\le \int_{M}\left(
\Delta _{f}\left\vert \nabla v\right\vert ^{2}\right) \phi ^{2}e^{-f} \\
=-\int_{M}\left\langle \nabla \left\vert \nabla v\right\vert ^{2},\nabla
\phi ^{2}\right\rangle e^{-f} \\
\leq \int_{M}\left\vert v_{ij}\right\vert ^{2}\phi
^{2}e^{-f}+4\int_{M}\left\vert \nabla v\right\vert ^{2}\left\vert \nabla
\phi \right\vert ^{2}e^{-f}.
\end{gather*}%
Thus, 
\begin{equation*}
\int_{M}\left\vert v_{ij}\right\vert ^{2}e^{-f}\phi ^{2}\le
4\int_{M}\left\vert \nabla v\right\vert ^{2}\left\vert \nabla \phi
\right\vert ^{2}e^{-f}.
\end{equation*}%
Since $\int_{M}\left\vert \nabla v\right\vert ^{2}e^{-f}<\infty ,$ the right
hand side goes to $0$ after taking the limit $R\to \infty$ by choosing $\phi$
to be the standard cut-off function so that $\phi=1$ on $B_p(R)$ and $\phi=0$
outside $B_p(2R).$ This forces $v_{ij}=0.$ Therefore, $|\nabla v|$ must be a
constant on $M$. Now the weighted volume $V_f(M)=\int_M e^{-f}dv=\infty$ as $%
M$ is $f$-nonparabolic. Using $\int_{M}\left\vert \nabla v\right\vert
^{2}e^{-f}<\infty$ again, we conclude $|\nabla v|=0$ and $v$ is a constant.
This finishes our proof.
\end{proof}

We now prove Theorem \ref{Rigid_Ric_Pos}.

\begin{proof}[Proof of Theorem \protect\ref{Rigid_Ric_Pos} ]
First, since $\lambda _{1}\left( M\right) >0,$ we know that $M$ is $f$%
-nonparabolic. This follows exactly as in the case $f=$constant. We refer to 
\cite{L} for details.

Let us assume that $M$ has at least two ends. By Lemma \ref{nonparabolic}
exactly one end is $f-$nonparabolic, and all other are $f-$parabolic. We
denote $E$ to be the $f$-nonparabolic end and let $F:=M\backslash E$. Then $%
F $ is an $f$-parabolic end.

First, we claim the fact that $\lambda _{1}\left( M\right) =\frac{1}{4}a^{2}$
implies 
\begin{equation}
V_{f}\left( F\backslash B_{p}\left( R\right) \right) \leq Ce^{-aR}.
\label{eq1}
\end{equation}%
This claim can be verified by following the argument of Li and Wang in \cite%
{LW}. Since they only use the variational principle for the bottom spectrum
and integration by parts, it is easy to check that their estimates can be
carried over to our setting. We shall omit the details here.

For a geodesic ray $\gamma $ contained in the end $F,$ define the associated
Busemann function by 
\begin{equation*}
\beta \left( x\right) :=\lim_{t\rightarrow \infty }\left( t-d\left( x,\gamma
\left( t\right) \right) \right) .
\end{equation*}%
Denote by $\tau _{t}\left( s\right) $ the normal minimizing geodesic from $%
\gamma \left( t\right) $ to $x$. According to (\ref{v2}) we have%
\begin{equation*}
\Delta _{f}\left( d\left( \gamma \left( t\right) ,x\right) \right) \leq 
\frac{n-1}{r}-\frac{2}{r}f\left( x\right) +\frac{2}{r^{2}}%
\int_{0}^{r}f\left( s\right) ds,
\end{equation*}%
where $r:=d\left( \gamma \left( t\right) ,x\right) $ and $f\left( s\right)
:=f\left( \tau _{t}\left( s\right) \right) .$ Let us assume that there exist 
$\alpha >0$ and $C>0$ such that $\left\vert f\left( z\right) \right\vert
\leq \alpha d\left( p,z\right) +C,$ for all $z\in M.$ Then, 
\begin{eqnarray*}
\left\vert f\left( s\right) \right\vert  &=&\left\vert f\left( \tau
_{t}\left( s\right) \right) \right\vert \leq \alpha d\left( p,\tau
_{t}\left( s\right) \right) +C \\
&\leq &\alpha \left( d\left( p,x\right) +d\left( x,\tau _{t}\left( s\right)
\right) \right) +C \\
&=&\alpha \left( r-s\right) +\left( \alpha d\left( p,x\right) +C\right) ,
\end{eqnarray*}%
for any $0<s<r.$ Therefore, since $\left\vert f\left( x\right) \right\vert
\leq \alpha d\left( p,x\right) +C,$ we conclude 
\begin{eqnarray*}
\Delta _{f}\left( d\left( \gamma \left( t\right) ,x\right) \right)  &\leq &%
\frac{n-1+4\left( \alpha d\left( p,x\right) +C\right) }{r}+\frac{2}{r^{2}}%
\int_{0}^{r}\alpha (r-s)ds \\
&=&\frac{n-1+4\left( \alpha d\left( p,x\right) +C\right) }{r}+\alpha .
\end{eqnarray*}%
Now, by using this and the definition of the Busemann function, it is
standard to see that the following inequality holds in the sense of
distributions: 
\begin{equation*}
\Delta _{f}\beta (x) \geq -\alpha .
\end{equation*}%
Now, if $a$ denotes the linear growth rate of $f$ i.e., it is the infimum of
such $\alpha ,$ it follows that 
\begin{equation}
\Delta _{f}\beta \left( x\right) \geq -a.  \label{eq2}
\end{equation}

To estimate the volume growth of the f-nonparabolic end $E$ we will use (\ref%
{eq2}) and the fact that $\left\vert \nabla \beta \right\vert =1.$ Note 
\begin{equation*}
\Delta _{f}e^{a\beta }\geq 0.
\end{equation*}%
We also know that on the end $E$ the Busemann function is equivalent to the
distance function, that is, there exists a constant $C$ such that 
\begin{equation*}
-r\left( x\right) -C\leq \beta \left( x\right) \leq -r\left( x\right) +C
\end{equation*}%
for $x\in E,$ see \cite{LW1}. Integrating $\Delta _{f}e^{a\beta }$ on $%
\left\{ -t\leq \beta \leq -r\right\} \cap E$ and using Stokes theorem we get%
\begin{eqnarray*}
0 &\leq &\frac{1}{a}\int_{\left\{ -t\leq \beta \leq -r\right\} \cap E}\left(
\Delta _{f}e^{a\beta }\right) e^{-f} \\
&=&\int_{\left\{ \beta =-r\right\} \cap E}e^{a\beta }e^{-f}-\int_{\left\{
\beta =-t\right\} \cap E}e^{a\beta }e^{-f} \\
&=&e^{-ar}A_{f}\left( \left\{ \beta =-r\right\} \cap E\right)
-e^{-at}A_{f}\left( \left\{ \beta =-t\right\} \cap E\right) ,
\end{eqnarray*}%
where we have used $\left\vert \nabla \beta \right\vert =1$. Here, $%
A_{f}\left( \Omega \right) $ denotes the area of the set $\Omega $ with
respect to the weighted area form. This shows for a fixed $r$ and all $t>r,$ 
\begin{equation*}
A_{f}\left( \left\{ \beta =-t\right\} \cap E\right) \leq C\left( r\right)
e^{at}.
\end{equation*}%
Integrating with respect to $t$ from $r$ to $R$ we obtain an upper bound for
the volume of the sublevel sets of the Busemann function. Since the Busemann
function is equivalent with the distance function on $E,$ it follows that
for $R>0,$ 
\begin{equation}
V_{f}\left( B_{p}\left( R\right) \cap E\right) \leq Ce^{aR}.  \label{eq3}
\end{equation}

Consider the function 
\begin{equation*}
B:=e^{\frac{1}{2}a\beta }.
\end{equation*}%
By (\ref{eq2}) and $\left\vert \nabla \beta \right\vert =1,$ we have%
\begin{equation}
\Delta _{f}B\geq -\frac{1}{4}a^{2}B.  \label{eq4}
\end{equation}%
We now use $B$ as a test function for the variational formula of $\lambda
_{1}\left( M\right) =\frac{1}{4}a^{2}.$ For this sake, we define a cut-off
function $\phi $ with support in $B_{p}\left( 2R\right) $ such that $\phi =1$
on $B_{p}\left( R\right) $ and $\left\vert \nabla \phi \right\vert \leq 
\frac{C}{R}.$ Then, according to the variational characterization of $%
\lambda_1(M),$ we have 
\begin{gather*}
\frac{1}{4}a^{2}\int_{M}\left( B\phi \right) ^{2}e^{-f}\leq
\int_{M}\left\vert \nabla \left( B\phi \right) \right\vert ^{2}e^{-f} \\
=\int_{M}\left\vert \nabla B\right\vert ^{2}\phi ^{2}e^{-f}+\frac{1}{2}%
\int_{M}\left\langle \nabla B^{2},\nabla \phi ^{2}\right\rangle
e^{-f}+\int_{M}\left\vert \nabla \phi \right\vert ^{2}B^{2}e^{-f} \\
=\int_{M}\left\vert \nabla B\right\vert ^{2}\phi ^{2}e^{-f}-\frac{1}{2}%
\int_{M}\left( \Delta _{f}B^{2}\right) \phi ^{2}e^{-f}+\int_{M}\left\vert
\nabla \phi \right\vert ^{2}B^{2}e^{-f} \\
=-\int_{M}B\left( \Delta _{f}B\right) \phi ^{2}e^{-f}+\int_{M}\left\vert
\nabla \phi \right\vert ^{2}B^{2}e^{-f} \\
=\frac{1}{4}a^{2}\int_{M}B^{2}\phi ^{2}e^{-f}+\int_{M}\left\vert \nabla \phi
\right\vert ^{2}B^{2}e^{-f} -\int_{M}B\left( \Delta _{f}B +\frac{1}{4}a^{2}
B\right) \phi ^{2}e^{-f}.
\end{gather*}%
Therefore, 
\begin{equation*}
\int_{M}B\left( \Delta _{f}B +\frac{1}{4}a^{2} B\right) \phi ^{2}e^{-f} \le
\int_{M}\left\vert \nabla \phi \right\vert ^{2}B^{2}e^{-f}.
\end{equation*}%
From (\ref{eq1}) and (\ref{eq3}) and the construction of $B$ we have 
\begin{equation*}
\int_{M}\left\vert \nabla \phi \right\vert ^{2}B^{2}e^{-f}\leq \frac{C}{R}%
\rightarrow 0.
\end{equation*}
In view of (\ref{eq4}), we conclude $\Delta _{f}B+\frac{1}{4}a^{2}B=0.$ In
particular, $B$ is smooth. Therefore, 
\begin{equation*}
\Delta _{f}\beta =-a\ \ \text{and\ }\ \left\vert \nabla \beta \right\vert =1
\end{equation*}%
hold everywhere on $M.$ Now, by the Bochner formula, 
\begin{equation*}
0=\frac{1}{2}\Delta _{f}\left\vert \nabla \beta \right\vert ^{2}=\left\vert
\beta _{ij}\right\vert ^{2}+\left\langle \nabla \Delta _{f}\beta ,\nabla
\beta \right\rangle +Ric_{f}\left( \nabla \beta ,\nabla \beta \right) \geq
\left\vert \beta _{ij}\right\vert ^{2}.
\end{equation*}%
This proves $\beta _{ij}=0,$ which implies $M$ is a direct product given by $%
M=\mathbb{R}\times N.$ That $N$ is compact follows from the assumption of $M$
having two ends. This proves the Theorem.
\end{proof}

We now discuss an important application of this result to the gradient Ricci
solitons. Recall that a gradient steady Ricci soliton is a manifold $\left(
M,g,f\right) $ such that $R_{ij}+f_{ij}=0.$ Here we prove that gradient
steady Ricci solitons are either connected at infinity or they are trivial,
that is, isometric to a Ricci flat cylinder.

\begin{theorem}
\label{S} Let $\left( M,g,f\right) $ be a gradient steady Ricci soliton.
Then either $M$ is connected at infinity or $M$ is isometric to $\mathbb{R}%
\times N$ for a compact Ricci flat manifold $N.$
\end{theorem}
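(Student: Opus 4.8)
The plan is to reduce Theorem \ref{S} to the rigidity statement of Theorem \ref{Rigid_Ric_Pos}, and then, in the splitting alternative, to identify the cross-section $N$ as a compact steady soliton and deduce that it is Ricci flat.

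First I would set things up. Since $(M,g,f)$ is steady, $Ric_f = Ric + Hess(f) = 0$, so in particular $Ric_f \geq 0$. Normalizing as in (\ref{v6}), we have $|\nabla f|^2 + S = a^2$ with $S \geq 0$; if $a = 0$ then $f$ is constant and $M$ is Ricci flat, a case I would dispatch separately by the classical Cheeger--Gromoll splitting theorem, so from now on assume $a > 0$. By Proposition \ref{Spec_Steady}, $\lambda_1(M) = \frac{1}{4}a^2$. The key preliminary point is that this $a$ is exactly the linear growth rate of $f$. Indeed, $S \geq 0$ forces $|\nabla f| \leq a$, so integrating along minimizing geodesics gives $|f|(x) \leq a\,r(x) + |f|(p)$ and hence the linear growth rate $a_0$ satisfies $a_0 \leq a$. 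On the other hand, Theorem \ref{Est_Ric_Pos} yields $\frac{1}{4}a^2 = \lambda_1(M) \leq \frac{1}{4}a_0^2$, so $a_0 \geq a$. Therefore $a_0 = a$ and $\lambda_1(M) = \frac{1}{4}a_0^2$, which is precisely the hypothesis of Theorem \ref{Rigid_Ric_Pos}. That theorem then gives the dichotomy: either $M$ is connected at infinity, in which case there is nothing more to prove, or $M$ is isometric to a Riemannian product $\mathbb{R}\times N$ with $N$ compact. It remains only to show that $N$ is Ricci flat.

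In the splitting case I would argue as follows. Let $t$ be the coordinate on the $\mathbb{R}$ factor, so $\partial_t$ is a parallel unit field with $Ric_M(\partial_t,\partial_t) = 0$ and $Ric_M(\partial_t, X) = 0$ for $X$ tangent to $N$. Feeding these into the soliton equation $Ric + Hess(f) = 0$ and using that $\partial_t$ is parallel, I obtain $f_{tt} = \partial_t^2 f = 0$ and $f_{ti} = \partial_i\partial_t f = 0$, so $\partial_t f$ is a global constant and $f = ct + \phi(y)$ for some function $\phi$ on $N$. Restricting the soliton equation to directions tangent to $N$, and using that the slices $\{t\}\times N$ are totally geodesic (so $Hess_M(ct)$ vanishes on $TN$, $Hess_M(\phi)|_{TN} = Hess_N(\phi)$, and $Ric_M|_{TN} = Ric_N$), I get $Ric_N + Hess_N(\phi) = 0$. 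Thus $(N, g_N, \phi)$ is itself a compact gradient steady Ricci soliton. Tracing gives $S_N + \Delta_N\phi = 0$, and integrating over the closed manifold $N$ yields $\int_N S_N = 0$. Since the scalar curvature of the product satisfies $S_N = S_M \geq 0$, this forces $S_N \equiv 0$; then $\Delta_N\phi = 0$ on the compact $N$ makes $\phi$ constant, and finally $Ric_N = -Hess_N(\phi) = 0$, as desired.

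The routine inputs are the two comparison/spectral facts used in the first step. The main obstacle is the analysis of the splitting case: extracting the product decomposition $f = ct + \phi$ from the soliton equation and recognizing the cross-section as a compact steady soliton, after which compactness and $S \geq 0$ force triviality. A minor separate point is the degenerate normalization $a = 0$, where $f$ is constant and $M$ is Ricci flat; there one falls back on the classical splitting theorem (noting that a product $\mathbb{R}\times N$ has two ends only when $N$ is compact) to reach the same dichotomy.
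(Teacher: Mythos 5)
Your proof is correct and follows essentially the same route as the paper: Proposition \ref{Spec_Steady} gives $\lambda_1(M)=\frac{1}{4}a^2$, Theorem \ref{Rigid_Ric_Pos} yields the dichotomy, and in the splitting case the cross-section is identified as a compact gradient steady soliton, hence Ricci flat. You additionally supply details the paper treats as evident---notably the verification that the normalization constant $a$ in $|\nabla f|^2+S=a^2$ coincides with the linear growth rate of $f$ (needed to invoke Theorem \ref{Rigid_Ric_Pos}), the degenerate case $a=0$, and the explicit decomposition $f=ct+\phi(y)$---and these details are accurate.
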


\begin{proof}[Proof of Theorem \protect\ref{S} ]
Let us assume that $M$ has at least two ends. It is known that for any
gradient steady Ricci soliton there exists $a>0$ so that $\left\vert \nabla
f\right\vert ^{2}+S=a^{2},$ see (\ref{v6}). By Proposition \ref{Spec_Steady}
we know that $\lambda _{1}\left( M\right) =\frac{1}{4}a^{2}.$ Then, by
Theorem \ref{Rigid_Ric_Pos} we conclude $M=\mathbb{R}\times N$ for some
compact manifold $N.$ It is easy to see $N$ has to be a steady Ricci soliton
also. However, any compact steady gradient Ricci soliton must be Ricci flat.
This concludes the proof.
\end{proof}

{\small DEPARTMENT OF MATHEMATICS, COLUMBIA UNIVERSITY }

{\small NEW YORK, NY 10027}\newline
{\small E-mail address: omuntean@math.columbia.edu}

\bigskip

{\small SCHOOL OF MATHEMATICS, UNIVERSITY OF MINNESOTA }

{\small MINNEAPOLIS, MN 55455} \newline
{\small E-mail address: jiaping@math.umn.edu}

\end{document}